\pgfplotsset{compat=1.15}
\numberwithin{equation}{section}
\def\@tocline#1#2#3#4#5#6#7{\relax
  \ifnum #1>\c@tocdepth 
  \else
    \par \addpenalty\@secpenalty\addvspace{#2}%
    \begingroup \hyphenpenalty\@M
    \@ifempty{#4}{%
      \@tempdima\csname r@tocindent\number#1\endcsname\relax
    }{%
      \@tempdima#4\relax
    }%
    \parindent\z@ \leftskip#3\relax \advance\leftskip\@tempdima\relax
    \rightskip\@pnumwidth plus4em \parfillskip-\@pnumwidth
    #5\leavevmode\hskip-\@tempdima
      \ifcase #1
       \or\or \hskip 1em \or \hskip 2em \else \hskip 3em \fi%
      #6\nobreak\relax
    \hfill\hbox to\@pnumwidth{\@tocpagenum{#7}}\par
    \nobreak
    \endgroup
  \fi}
\title[]{A unique continuation property for $|\overline \partial u| \leq V |u|$}           
\author[]{Ziming Shi}
\address{Department of Mathematics,
	University of California-Irvine, Irvine, CA, 92697} 
\email{zimings3@uci.edu}
\keywords{} 
\subjclass[2020]{ }  
\newcommand{\dist}{\operatorname{dist}}
\newcommand{\supp}{\operatorname{supp}}
\newcommand{\loc}{\mathrm{loc}}
\newcommand{\Res}{\operatorname{Res}} 
\newtheorem{thm}{Theorem}[section]
\newtheorem{cor}[thm]{Corollary} 
\newtheorem{prop}[thm]{Proposition}
\newtheorem{lemma}[thm]{Lemma}
\theoremstyle{definition}
\newtheorem{defn}[thm]{Definition}
\newtheorem{exmp}[thm]{Example}
\newtheorem{ques}[thm]{Question}
\theoremstyle{remark}
\newtheorem{rem}[thm]{Remark}
\newtheorem*{clm}{Claim}
\newtheorem*{ack}{Acknowledgment}
\renewcommand{\th}[1]{\begin{thm}\label{#1}}
	\renewcommand{\eth}{\end{thm}}
\newcommand{\co}[1]{\begin{cor}\label{#1}}
	\newcommand{\eco}{\end{cor}}
\newcommand{\pr}[1]{\begin{prop}\label{#1}}
	\newcommand{\epr}{\end{prop}}
\newcommand{\df}[1]{\begin{defn}\label{#1}}
	\newcommand{\edf}{\end{defn}}
\newcommand{\ex}[1]{\begin{exmp}\label{#1}} 
	\newcommand{\eex}{\end{exmp}}
\newcommand{\qu}[1]{\begin{ques}\label{#1}}
	\newcommand{\equ}{\end{ques}}  
\newcommand{\mk}{\begin{rem}}
	\newcommand{\emk}{\end{rem}}
\newcommand{\cl}{\begin{clm}}
	\newcommand{\ecl}{\end{clm}} 
\newcommand{\ac}{\begin{ack}}
	\newcommand{\eac}{\end{ack}} 
\newcommand{\ga}{\begin{gather}}
\newcommand{\ega}{\end{gather}}
\newcommand{\gan}{\begin{gather*}}
\newcommand{\egan}{\end{gather*}}
\newcommand{\al}{\begin{gngn}}
	\newcommand{\eal}{\end{align}}
\newcommand{\aln}{\begin{align*}}
\newcommand{\ealn}{\end{align*}}
\newcommand{\eq}[1]{\begin{equation}\label{#1}}
\newcommand{\eeq}{\end{equation}}
\newcommand{\pa}{\partial{}}
\newcommand{\na}{\nabla}
\newcommand{\db}{\dbar}
\newcommand{\we}{\wedge}
\newcommand{\ra}{\longrightarrow}
\newcommand{\sm}{\setminus}
\newcommand{\seq}{\subseteq}
\newcommand{\Z}{\mathbb{Z}}
\newcommand{\R}{\mathbb{R}} 
\newcommand{\C}{\mathbb{C}}
\newcommand{\D}{\mathbb{D}}
\newcommand{\ov}{\overline}
\newcommand{\ti}{\tilde}
\newcommand{\wti}{\widetilde}
\newcommand{\hht}{\widehat}
\newcommand{\RE}{\operatorname{Re}}
\newcommand{\dbar}{\overline\partial}
\newcommand{\all}{\alpha}
\newcommand{\del}{\delta}
\newcommand{\Del}{\Delta}
\newcommand{\var}{\varphi}
\newcommand{\e}{\epsilon}
\newcommand{\ve}{\varepsilon}
\newcommand{\om}{\omega}
\newcommand{\Om}{\Omega}
\newcommand{\thh}{\theta}
\newcommand{\La}{\Lambda}
\newcommand{\la}{\lambda}
\newcommand{\gm}{\gamma}
\newcommand{\si}{\sigma}
\newcommand{\yh}{\frac{1}{2}}
\newcommand{\re}[1]{(\ref{#1})}
\newcommand{\rl}[1]{Lemma~\ref{#1}}
\newcommand{\rp}[1]{Proposition~\ref{#1}}
\newcommand{\rt}[1]{Theorem~\ref{#1}}
\newcommand{\nid}{\noindent}
\newcounter{pp}
\newcommand{\bpp}{\begin{list}{$\hspace{-1em}\alph{pp})$}{\usecounter{pp}}}
	\newcommand{\epp}{\end{list}}
\newcounter{ppp}
\newcommand{\bppp}{\begin{list}{$\hspace{-1em}(\roman{ppp})$}{\usecounter{ppp}}}
	\newcommand{\eppp}{\end{list}}
\newcommand{\Ac}{\mathcal{A}}
\begin{document} 
	\definecolor{rvwvcq}{rgb}{0.08235294117647059,0.396078431372549,0.7529411764705882}
\maketitle  

\begin{abstract} 
Let $u: \Omega \subset \mathbb C^n \to \mathbb C^m$, for $n \geq 2$ and $m \geq 1$. Let $1 \leq p \leq 2$, and $2(2n)^2 -1 \leq q < \infty$ such that $\displaystyle \frac{1}{p} + \frac{1}{p'} = 1$ and $\displaystyle \frac{1}{p} - \frac{1}{p'} = \frac{1}{q}$. Suppose $|\overline \partial u| \leq V |u|$, where $V \in L^q_{\operatorname{loc}}(\Omega)$. Then $u$ has a unique continuation property in the following sense: if $u \in W^{1,p}_{\operatorname{loc}}(\Omega)$ and for some $z_0 \in \Omega$, $\| u \|_{L^{p'}(B(z_0,r))} $ decays faster than any powers of $r$ as $r \to 0$, then $u \equiv 0$. The same result holds for $q=\infty$ if $u$ is scalar-valued ($m=1$).   
\end{abstract} 
\vspace{2cm}

\tableofcontents 

\section{Introduction} 
\begin{defn}
 Let $\Om$ be an open set in $\R^d$ and let $z_0 \in \Om$. For $u \in L^p_\loc(\Om)$, we say that $u$ \emph{vanishes to order $N$ in the $L^p$ sense at $z_0$} if 
\begin{equation} \label{ord_N_vanish}  
   \lim_{r \to 0} r^{-N} \int_{B(z_0,r)} |u|^p = 0.    
\end{equation}
We say that $u$ \emph{vanishes to infinite order in the $L^p$ sense at $z_0$} if \re{ord_N_vanish} holds for all $N$. 
\end{defn}
The following notation will be used throughout the paper. 
\[
  \frac{1}{p} + \frac{1}{p'} =1, \quad \frac{1}{p} - \frac{1}{p'} = \frac{1}{q}, \quad 1 \leq p \leq 2. 
\]
The purpose of this paper is to establish a strong unique continuation property (SUCP) for the differential inequality $|\db u| \leq V |u|$, where the potential is locally in $L^q$ for suitable $q$. 
\begin{thm} \label{Thm::mt_intro}  
Let $\Om$ be a connected open subset of $\C^n$, $n \geq 2$. Let $2(2n)^2 -1 \leq q \leq \infty$ and set $p = \frac{2q}{q+1}$, $p' = \frac{2q}{q-1}$ so that $\frac{1}{p} - \frac{1}{p'} = \frac{1}{q}$ (in particular $p=2$ when $q = \infty$). Then the differential inequality $|\db u| \leq V|u| $ has the following strong unique continuation property: 
\begin{enumerate}[(i)] 
 \item 
Let $u: \Om \to \C^m$ for $m \geq 1$, and let $|\db u| \leq V|u|$ with $V \in L^q_\loc(\Om)$, for some $2(2n)^2-1 \leq q <\infty$. 
If $u \in W_\loc^{1,p}(\Om)$ where $p = \frac{2q}{q+1} \in (1,2)$, and $u$ vanishes to infinite order in the $L^{p'}$ sense at some $z_0 \in \Om$, then $u$ is $0$ in $\Om$.   
\item 
Let $u: \Om \to \C$ and let $|\db u| \leq V|u|$ with $V \in L^\infty_\loc(\Om)$. If $u \in W_\loc^{1,2}(\Om)$, and $u$ vanishes to infinite order in the $L^{2}$ sense at some $z_0 \in \Om$, then $u$ is $0$ in $\Om$.   
\end{enumerate}

\end{thm}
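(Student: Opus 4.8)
The plan is to reduce the several-variables problem to a Carleman-estimate argument and then exploit the vanishing hypothesis. The key observation is that $\dbar u = f$ with $|f| \le V|u|$ means $u$ solves an inhomogeneous Cauchy–Riemann system; applying $\partial$ (or rather using the fact that $\Delta = 4\sum_j \partial_{z_j}\partial_{\bar z_j}$ up to constants) shows each component of $u$ satisfies a second-order differential inequality $|\Delta u| \lesssim |\nabla(V u)|$ or, after a first-order reduction, a suitable inequality one can attack with Carleman weights. However, the cleaner route — and the one I would pursue — is to stay at first order and prove a Carleman estimate directly for the operator $\dbar$ with a radial weight $|z|^{-\tau}$, of the form
\begin{equation*}
  \tau^{1/2}\,\big\| |z|^{-\tau} \varphi \big\|_{L^{p'}(B_R)} \;\lesssim\; \big\| |z|^{-\tau} \dbar\varphi \big\|_{L^{p}(B_R)}
\end{equation*}
for all $\varphi \in C_c^\infty(B_R \setminus \{0\})^m$ and all large $\tau$ avoiding a discrete set, with the constant independent of $\tau$. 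The pair $(p,p')$ is exactly the one forced by the scaling of $\dbar$ in $\C^n = \R^{2n}$, and the requirement $q \ge 2(2n)^2 - 1$ is what makes $V\in L^q_{\loc}$ absorbable by the left-hand side via Hölder: $\|V|z|^{-\tau}\varphi\|_{L^p} \le \|V\|_{L^q}\,\||z|^{-\tau}\varphi\|_{L^{p'}}$, and the $L^q$–$L^{p'}$–$L^p$ Hölder relation is precisely $\tfrac1p = \tfrac1q + \tfrac1{p'}$, i.e. $\tfrac1p - \tfrac1{p'} = \tfrac1q$.

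**Next I would** set up the standard machinery. First, a local regularity/cutoff step: by interior elliptic estimates for $\dbar$ (Cauchy transform / $L^p$ theory of the $\bar\partial$-Neumann or Bochner–Martinelli operator), the hypothesis $u \in W^{1,p}_{\loc}$ together with $\dbar u \in L^p_{\loc}$ (which follows from $|\dbar u|\le V|u|$, Hölder, and $u\in L^{p'}_{\loc}$ by Sobolev embedding) can be bootstrapped enough to justify the integration by parts. Second, I introduce the weight $\phi(z) = -\log|z|$ (or a convexified version $\phi_\tau$) and apply the Carleman estimate to $\varphi = \chi u$, where $\chi$ is a cutoff that is $1$ near $z_0$ and supported in a small ball where $\|V\|_{L^q}$ is small (legitimate by absolute continuity of the integral). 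Third, move the potential term to the left: from $|\dbar(\chi u)| \le |\dbar\chi||u| + \chi V|u|$ and the smallness of $\|V\|_{L^q(\supp\chi)}$, the $\chi V|u|$ term is absorbed, leaving
\begin{equation*}
  \tau^{1/2}\big\||z|^{-\tau}\chi u\big\|_{L^{p'}} \;\lesssim\; \big\||z|^{-\tau}(\dbar\chi) u\big\|_{L^{p}},
\end{equation*}
where the right side is supported in an annulus $|z| \ge c > 0$, hence bounded by $C^{-\tau}\|u\|_{L^p}$ with $C>1$. Fourth, the infinite-order vanishing of $u$ in the $L^{p'}$ sense at $z_0$ lets me say that the left side, restricted to a fixed small ball $B(z_0,\rho)$, is at least $\rho^{-\tau}\cdot$(something not decaying like $\rho^\tau$ would force) — the usual three-balls/scaling argument — and letting $\tau \to \infty$ forces $u \equiv 0$ near $z_0$. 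Finally, a connectedness argument (the set where $u$ vanishes to infinite order is open and closed) propagates $u \equiv 0$ to all of $\Om$.

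**The scalar case (ii)** with $q=\infty$, $p=p'=2$ is the $L^2$ Carleman estimate for $\dbar$, which is classical and cleaner; the point of allowing only $m=1$ there is presumably that the vector-valued $L^2$ Carleman estimate for $\dbar$ with optimal constant fails or requires the $L^p$–$L^{p'}$ spread, whereas in the scalar case one can diagonalize / use the explicit spectral decomposition of $\dbar$ on the sphere $S^{2n-1}$. I would treat (ii) by the same scheme but with the $L^2$ estimate
\begin{equation*}
  \tau \big\| |z|^{-\tau} u \big\|_{L^2} \;\lesssim\; \big\| |z|^{-\tau}\dbar u\big\|_{L^2},
\end{equation*}
valid for $\tau$ outside a discrete set of "forbidden" values coming from the eigenvalues of the tangential $\dbar_b$ operator on the sphere.

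**The main obstacle** will be establishing the weighted $L^{p'} \leftarrow L^p$ Carleman estimate for $\dbar$ with the \emph{polynomial-in-$\tau$ gain} and with a constant uniform in $\tau$ (over the admissible arithmetic progression of $\tau$'s). This is genuinely the heart of the matter: it is an $L^p$ Carleman estimate, so one cannot simply integrate by parts in $L^2$; instead one needs the Kenig–Ruiz–Sogge / Jerison–Kenig type uniform Sobolev inequality $\||D|^{-1}(\cdot)\|_{L^{p'}} \lesssim \|\cdot\|_{L^p}$ adapted to the operator $\dbar + (\text{lower order in }\tau)$ after conjugation, and one must check that the symbol $\overline{\zeta} + i\tau(\text{gradient of weight})$ stays bounded away from zero on the relevant region — which is exactly why the weight has to be (logarithmically) convexified and why $\tau$ must avoid a discrete set. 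Controlling the $n$-dimensional (i.e. $2n$ real-dimensional) geometry and getting the exponent threshold $2(2n)^2-1$ sharp enough for Hölder to close is where the real work lies; everything else (cutoffs, absorption, vanishing-order bookkeeping, connectedness) is routine once that estimate is in hand.
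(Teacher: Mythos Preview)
Your overall architecture for part (i) --- prove a weighted $L^p\to L^{p'}$ Carleman inequality for $\dbar$, absorb $V$ by H\"older, cut off, send $\tau\to\infty$ --- is the right shape, but the central estimate you assume does not hold in the form you state. You claim
\[
\tau^{1/2}\big\||z|^{-\tau}\varphi\big\|_{L^{p'}} \lesssim \big\||z|^{-\tau}\dbar\varphi\big\|_{L^p}
\]
with constant independent of $\tau$. What one can actually prove (the paper does this by adapting Wolff's method for $|\Delta u|\le V|\nabla u|$) is only a \emph{localized} estimate: for a radial interval $\gamma$,
\[
\big\|e^{\nu\psi(\sigma)}u\big\|_{L^{p'}(A(\psi^{-1}\gamma))} \;\lesssim\; \nu\min\{|\gamma|,\nu^{-1/2}\}\,\big\|e^{\nu\psi(\sigma)}\dbar u\big\|_{L^p},
\]
with $\psi$ a convexification of $\log(1/|z|)$. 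Taking $\gamma=\R$ gives constant $\nu^{1/2}$, a \emph{loss}, not a gain; your absorption step then reads $\|\cdot\|_{p'}\lesssim \nu^{1/2}\|V\|_q\|\cdot\|_{p'}+\text{error}$ and cannot close for large $\nu$. The missing idea is Wolff's measure-theoretic lemma: for the measure $d\mu=(V|f|)^p$ pushed to the radial variable, one finds for each $N$ disjoint intervals $I_j$ and exponents $k_j\in[N,2N]$ such that $e^{k_j x}d\mu$ concentrates on $I_j$ while $\sum_j|I_j|^{-1}\gtrsim N$. Applying the localized estimate on each $A(\psi^{-1}I_j)$ and summing yields $\|V\|_{L^q(B(0,s_0))}\gtrsim 1$, contradicting smallness on a small ball. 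The threshold $q\ge 2(2n)^2-1$ comes out of this machinery (balancing oscillatory-integral bounds on $S^{2n-1}$ against the localization scale), not from a Kenig--Ruiz--Sogge uniform Sobolev inequality; the latter route does not close directly here.

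For part (ii) your proposal diverges entirely from the paper, which does \emph{not} use an $L^2$ Carleman estimate. Instead it invokes the Gong--Rosay theorem: for continuous scalar $f$ with $|\dbar f|\le C|f|$, the zero set $f^{-1}(0)$ is a complex-analytic variety. One then shows $V=\dbar u/u$ extends to a $\dbar$-closed $(0,1)$-form across the (real codimension $\ge 2$) zero set, solves $\dbar g=V$ locally, and observes $h=ue^{-g}$ is holomorphic and vanishes to infinite order at $z_0$, hence $h\equiv 0$. This is why (ii) is restricted to $m=1$: there is no vector-valued analogue of Gong--Rosay. Your guess that the restriction stems from failure of a vector-valued $L^2$ Carleman estimate is not the actual mechanism.
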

Our proof for part (i) and (ii) are very different. Part (i) is proved using the Carleman inequality method, whereas part (ii) relies on a theorem of Gong and Rosay \cite{G-R_07}, which states that for a continuous function $f: \Om \subset \C^n \to \C$ that is not identically zero, if $|\db f| \leq C |f|$ for some positive constant $C$, then the zero set of $f$ is a complex analytic set and thus has real dimension at most $2n-2$. For (i) we can prove the SUCP for any complex vector-valued functions, due to the fact that Carleman inequality works equally well for scalar-valued and vector-valued functions. For (ii) however, we can only prove for $\C$-valued functions, since there is currently no vector-valued analog for the theorem of Gong and Rosay. We note that part (ii) is not new and actually contained in a more general result proved recently by Pan and Zhang \cite[Theorem 5.1]{P-Z_24}, where they assume that $V$ takes the form $\frac{C}{|z|}$, however we decide to include our proof here since it is slightly simpler (due to our stronger assumption $V \in L^\infty_\loc$).  


When the source domain is 1-dimensional (i.e. $\Om \subset \C$), Pan and Zhang \cite{P-Z_23} proved that SUCP holds for $|\db u| \leq V |u|$ with $V \in 
L^2_\loc(\Om)$, for functions $u: \Om \to \C^m$ with $u \in W^{1,2}_{\loc}(\Om)$. The $L^2_\loc$ condition is optimal, as SUCP fails if $V \in L^p_\loc(\Om)$ for any $p<2$, by the example $u=e^{-|x|^{-\ve}}$ with suitably chosen $\ve$. In fact, the same example also shows that for $u: \C^n \to \C^m$ with $n \geq 2$, SUCP fails if $V \in L^q_\loc$ for any $q<2n$.

Inequalities involving the $\db$ operator arise naturally in complex analysis and geometry. In their study of boundary uniqueness of holomorphic mappings between smooth hypersurfaces, Bell and Lempert \cite{B-L_90} showed that given $u: \D \to \C^n$, $u \in C^\infty(\D)$, and $u$ satisfies $| \db u | \leq C |u|$ for some $C>0$, if $u$ vanishes to infinite order (all its derivatives vanish) at $0$, then $u$ vanish identically in a neighborhood of $0$. 
For applications of unique continuation of $\db$ inequalities in $J$ holomorphic curves and other related problems in almost complex analysis, the reader may refer to \cite{Ros10}.  

For our proof we follow the method of Wolff \cite{Tom90}, who considered a similar SUCP problem for the differential inequality $|\Del u| \leq V |\na u|$. We started out by constructing a kernel $L_\nu(z,\zeta)$ which is a $(n,n-1)$ form on $\C^n \times \C^n$, so that  
\[
  |z|^{-\nu} u(z) = \int L_\nu(z,\zeta) \we |\zeta|^{-\nu} \db u(\zeta), \quad u \in C^\infty_c(\C^n \sm \{ 0 \}).   
\]
We then osculate the weight $|z|^{-\nu}= e^{\nu \log \frac{1}{|z|}}$ to $e^{\nu \psi(\log \frac{1}{|z|})}$, where $ \psi$ is a strictly convex function, and obtain the following 
\[
 e^{\nu \psi (\si)} u (z) = \int P_\nu (z,\zeta) e^{\nu \psi(\tau)} \we \db u(\zeta), \quad \si = \log \frac{1}{|z|}, \quad \tau = \log \frac{1}{|\zeta|}.  
\]
This allows us to localize in the radial variable to intervals of logarithmic length $\nu^{-\yh}$ and obtain a Carleman inequality of the form: 
\begin{equation} \label{Car_ineq_intro} 
  \| e^{\nu \psi (\si)} u \|_{L^{p'} (A(\psi^{-1} \gm))} \leq C_n \nu \min 
  \{ |\gm|,\nu^{-\yh}\} \|  e^{\nu \psi (\si)} \db u \|_{L^{p} (B(0,1))}.    
\end{equation}
  
Here $\gm \subset \R$ is an interval in the radial variable and $A(\psi^{-1} \gm)$ is the annulus $\{ z \in \C^n: \si(z) = \log (1/|z|) \in \psi^{-1}(\gm) \}$.  
Note that for fixed $\gm$, the quantity $\nu \min \{ |\gm|,\nu^{-\yh}\} $ blows up as $\nu \to \infty$, and hence \re{Car_ineq_intro} is not considered a true Carleman inequality and one cannot apply the standard argument to derive unique continuation property directly from \re{Car_ineq_intro}. Instead, we use the following key insight of Wolff: for each $\nu$, there exists a collection of disjoint radial intervals $\gm_j$ such that $\left| e^{\nu \psi(\si)} V |u| \right|^p$ is concentrated on $A^{-1}(\gm_j)$ for each $j$, while $\gm_j$-s are 
``collectively small" on the scale of $\nu^{-1}$. One can then apply \re{Car_ineq_intro} on each such annulus and take the sum to get a true Carleman inequality with a constant that does not blow up with $\nu$, from which unique continuation easily follows.  

Since $\db$ is a first-order differential operator, and in view of the counterexample above, it is natural to conjecture that $L^{2n}$ (or perhaps $L^{2n+\ve}$) is the optimal condition on $V$ for SUCP to hold (See also Wolff \cite{Tom93} for a discussion of related unique continuation problems for second-order elliptic operators.) We list this as an open problem. 
\begin{ques} \label{Ques::L^2N} 
 Let $\Om$ be a connected open subset of $\C^n$ and let $u: \Om \to \C^m$. Set $p= \frac{4n}{2n+1}$ and $p'= \frac{4n}{2n-1}$ so that $\frac{1}{p}-\frac{1}{p'} = 
\frac{1}{2n}$. Suppose $u \in W^{1,p}_\loc(\Om)$ and satisfies $|\db u| \leq V |u|$, with $V \in L^{2n}_\loc(\Om)$. If $u$ vanishes to infinite order in the $L^{p'}$ sense at some point $z_0 \in \Om$, does $u$ vanish identically? 
\end{ques}
Another question is whether \rt{Thm::mt_intro} (ii) still holds for vector-valued functions. 
\begin{ques} \label{Ques::L^infty}  
 Let $\Om$ be a connected open subset of $\C^n$ and let $u: \Om \to \C^m$, for $m \geq 2$. Suppose $u \in W^{1,2}_\loc(\Om)$ and satisfies $|\db u| \leq V |u|$, with $V \in L^{\infty}_\loc(\Om)$. If $u$ vanishes to infinite order in the $L^{2}$ sense at some point $z_0 \in \Om$, does $u$ vanish identically? 
\end{ques} 
We remark that if $u$ is smooth, then the infinite order vanishing of $u$ at $z_0$ means that $|u(z)| \leq C|z-z_0|^N$ for any $N > 0$, or equivalently, all the derivatives of $u$ vanish at $z_0$. 
In the recent preprint \cite{P-Z_24}, Pan and Zhang showed that if $u: \Om \subset \C^n  \to \C$ with $u \in C^\infty_\loc(\Om)$, $|\db u| \leq V|u|$ for some 
$V \in L^{2n}_\loc(\Om)$, and $u$ vanishes to infinite order at $z_0$, then $u$ is $0$. Furthermore, they proved that the same result holds for vector ($\C^m$) valued smooth functions if $V \in L^{2n+\ve}$ for any $\ve>0$. In the proof of both these results, the smoothness assumption of $u$ is used to extend the flatness of $u$ into its restrictions along complex one-dimensional radial directions, so then one only needs to solve a 1-dimensional SUCP problem.   

We now fix some notations used in the paper.  
We denote by $C^\infty_c(D)$ the space of $C^\infty$ functions with compact support in $D$. 
We write $x \lesssim y$ to mean that $x \leq Cy$ for some constant $C$ independent of $x$ and $y$, and we write $x \approx y$ if $x \lesssim y$ and $y \lesssim x$. By $D^l$ we mean a differential operator of order $l$: $D^l_z g(z)= \pa_{z_i}^{\all_i} \pa_{\ov z_j}^{\beta_j} g(z)$, $\sum_i \all_i + \sum_j \beta_j = l$. 
\section{Taylor expansion of the fundamental solution}  
For $z,\zeta \in \C^n$, $\zeta \neq 0$, denote 
\[ 
 r = \frac{|z|}{|\zeta|}, \quad \thh =\thh_{z\zeta} = \angle_{z0\zeta} \in [0, \pi]. 
\] 
For a fixed $\zeta$, we denote by $T^N_z f(z,\zeta)$ the degree $N$ Taylor polynomial of $f(\cdot,\zeta)$ at $z=0$. Similarly, for fixed $\thh$, we let $T^N_r g(r,\theta)$ be the degree $N$ Taylor polynomial of $g(\cdot,\thh)$ at $r=0$. We write $d=2n$, and for all the computations we will assume $n \geq 2$ and $d \geq 3$.  

First we need a fact about the Taylor expansion of the Newtonian potential. The result is used in \cite{Tom90} and we shall provide the details. 
\begin{prop} \label{Prop::ul_sp_est}  
 Fix $\zeta \neq 0$. We have the following Taylor expansion 
\[
   \frac{1}{|z-\zeta|^{d-2}} = \frac{1}{|\zeta|^{d-2}} \sum_{m=0}^\infty P^{(\frac{d-2}{2})}_m(\cos \thh) r^m,    
\]
where $P_m^{(\frac{d-2}{2})}$ is an ultraspherical polynomial of degree $m$ such that $|P^{(\frac{d-2}{2})}_m(\cos \thh)| \leq m^{d-3}$.   
\end{prop}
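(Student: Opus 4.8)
The plan is to recognize $\frac{1}{|z-\zeta|^{d-2}}$ as the standard generating function for ultraspherical (Gegenbauer) polynomials. Fix $\zeta \neq 0$, write $r = |z|/|\zeta| \in [0,1)$ and $\thh = \angle_{z0\zeta}$, so that $|z-\zeta|^2 = |\zeta|^2(1 - 2r\cos\thh + r^2)$. Hence
\[
  \frac{1}{|z-\zeta|^{d-2}} = \frac{1}{|\zeta|^{d-2}} \, (1 - 2r\cos\thh + r^2)^{-\frac{d-2}{2}}.
\]
The classical generating identity $(1 - 2xt + t^2)^{-\la} = \sum_{m=0}^\infty C_m^{(\la)}(x)\, t^m$, valid for $|t| < 1$ and $|x| \leq 1$ whenever $\la > 0$, applies with $\la = \frac{d-2}{2} > 0$ (using $d \geq 3$), $x = \cos\thh$, $t = r$. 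Writing $P_m^{(\frac{d-2}{2})} := C_m^{(\frac{d-2}{2})}$, this gives the asserted expansion, with $P_m^{(\frac{d-2}{2})}$ a polynomial of degree $m$ in $\cos\thh$; that the series is indeed the Taylor expansion in $r$ at $r=0$ follows because it is an absolutely convergent power series in $r$ on $[0,1)$.

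The remaining point is the bound $|P_m^{(\frac{d-2}{2})}(\cos\thh)| \leq m^{d-3}$. For $|x| \leq 1$ and $\la > 0$ one has the standard fact that $|C_m^{(\la)}(x)|$ is maximized at the endpoints $x = \pm 1$, where $C_m^{(\la)}(\pm 1) = (\pm 1)^m \binom{m + 2\la - 1}{m}$; this monotonicity can be read off from the nonnegativity of the coefficients in the expansion of $C_m^{(\la)}$ in terms of Chebyshev-like building blocks, or cited directly from a standard reference on orthogonal polynomials (e.g. Szeg\H{o}). Therefore
\[
  |P_m^{(\frac{d-2}{2})}(\cos\thh)| \leq \binom{m + d - 3}{m} = \frac{(m+d-3)(m+d-4)\cdots(m+1)}{(d-3)!},
\]
a product of $d-3$ linear factors each at most $m + d - 3$. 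For $m$ large this is $O(m^{d-3})$; after absorbing the constant $\frac{1}{(d-3)!}$ and handling the finitely many small $m$ by adjusting the implicit constant (or by the convention that such bounds are stated up to a dimensional constant), we get $|P_m^{(\frac{d-2}{2})}(\cos\thh)| \lesssim m^{d-3}$, which is what is needed in the sequel.

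The main obstacle is essentially bookkeeping rather than depth: one must be careful that $\la = \frac{d-2}{2}$ is a positive (possibly half-integer) parameter so that the Gegenbauer generating function and the endpoint-maximum property apply in the stated generality, and one must track the degenerate low-dimensional cases — $d = 3$ gives $\la = \frac12$ (Legendre polynomials, bounded by $1$, consistent with $m^0$) and $d = 4$ gives $\la = 1$ (Chebyshev $U_m$, bounded by $m+1 \lesssim m$). Since the paper assumes $n \geq 2$, i.e. $d \geq 4$, these edge cases are covered. The only genuine care needed is to state the estimate uniformly in $\thh \in [0,\pi]$ and uniformly in $m \geq 1$, which the endpoint bound delivers.
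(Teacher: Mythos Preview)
Your proposal is correct and follows essentially the same route as the paper: both identify $(1-2r\cos\thh+r^2)^{-\la}$ as the Gegenbauer generating function and bound $|P_m^{(\la)}(\cos\thh)|$ by its endpoint value $P_m^{(\la)}(1)=\binom{m+d-3}{m}\approx m^{d-3}$. The only difference is cosmetic: the paper justifies the endpoint maximum by explicitly factoring $(1-e^{i\thh}r)^{-\la}(1-e^{-i\thh}r)^{-\la}$, multiplying out the binomial expansions, and observing that $P_k^{(\la)}$ is a cosine polynomial with nonnegative coefficients (since the binomial coefficients $\alpha_k^{(\la)}$ are positive for $\la>0$), whereas you simply cite this fact from Szeg\H{o}.
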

\begin{proof}
Denote $\la = \frac{d-2}{2}$. We have 
\[
  \frac{1}{|z-\zeta|^{d-2}} = \frac{1}{(|z -\zeta|^2)^\la} = \frac{1}{\left(|z|^2 - 2 |z| |\zeta| \cos \thh + |\zeta|^2 \right)^\la} = \frac{1}{|\zeta|^{d-2} \left( 1 - 2 r \cos \thh  + r^2 \right)^\la }.  
\]
Expand $(1-2 \tau \mu + \mu^2)^{-\la} $ as a power series of $r$ at $r = 0$ to get  
\begin{equation} \label{Pi_expansion} 
 (1-2 \tau \mu + \mu^2)^{-\la} = P^{(\la)}_0 (\cos \thh) + P^{(\la)}_1 (\cos \thh) r + P^{(\la)}_2 (\cos \thh) r^2 + \cdots . 
\end{equation} 
On the other hand we can write 
\[
  (1-2 \tau \mu + \mu^2)^{-\la} = (1 - e^{i \thh} r)^{-\la} (1-e^{-i \thh} r)^{-\la} 
\]
Expanding $1 - e^{i \thh} r $ and $1 - e^{i \thh} r $ in binomial series, we get 
\begin{gather*}
(1 - e^{i \thh} r)^{\la} = \all^{(\la)}_0 + \all^{(\la)}_1 e^{i\thh} r + \all^{(\la)}_2 e^{2i \thh }r^2 + \cdots, \quad 
 (1 - e^{-i \thh} r)^\la = \all^{(\la)}_0 + \all^{(\la)}_1 e^{-i\thh} r + \all^{(\la)}_2 e^{-2i \thh }r^2 + \cdots.  
\end{gather*}
Hence 
\begin{equation} \label{alli_expansion} 
  (1 - e^{i \thh} r)^{\la} = \all^2 + (2 \all_0 \all_1 \cos \thh) r + (2 \all_0 \all_2 \cos 2 \thh + \all_1^2) r^2 + (2 \all_0 \all_3 \cos 3 \thh + 2 \all_1 \all_2 \cos \thh) r^3 + \cdots.    
\end{equation}
Comparing the coefficients in \re{Pi_expansion} and \re{alli_expansion}, we get 
the following formula for $P^{(\la)}_k$:
\[
  P^{(\la)}_k = 2 \all_0 \all_k \cos k \thh + 2 \all_1 \all_{k-1} \cos (k-2) \thh + 2 \all_2 \all_{k-2} \cos (k-4)\thh +  \cdots,  
\]
the last term being $ \all^2_{\frac{k}{2}}$ if $k$ is even, and $2 \all_{\frac{k-1}{2}} \all_{\frac{k+1}{2}} \cos \thh$ if $k$ is odd. Now the binomial coefficients $\all^{(\la)}_k$ are given by (cf. \cite[p.~93]{Sze75}) 
\[
  \all^{(\la)}_k = \begin{pmatrix}
      k + \la -1 \\ k
  \end{pmatrix}, \quad k = 0, 1, 2, \dots.
\]
If $\la >0$ (which is true in our case since $\la = \frac{d-2}{2} >0$ when $d \geq 3 $), then $\all^{(\la)}_k $ are all positive. Now $P^{(\la)}_k $ is a trig polynomial in $\cos \thh$ (i.e. a linear combination of $\cos \thh, \cos 2\thh, \dots)$ with positive coefficients, thus it must achieve its maximum at $\cos 0 = 1$: 
\[
   | P_k^\la (\cos \thh)| \leq P_k^\la (1) = \begin{pmatrix}
      k + 2\la -1 \\ k
  \end{pmatrix} 
\approx k^{2\la -1} = k^{d-3}, 
\]
where we use the formula for $P_k^\la (1)$ from \cite[p.~80]{Sze75}. 
\end{proof}
\begin{prop} \label{Prop::IN_hg_form} 
For each non-negative integer $N$, there exists a $(n,n-1)$ form $I^N(z,\zeta)$ such that 
\[
  |z|^{-N} u(z) = \int_{\C^n} I^N(z,\zeta) |\zeta|^{-N+d-1} \we \db u(\zeta), \quad u \in C^\infty_c(\C^n \sm \{ 0 \}), 
\] 
where $I^N(z,\zeta) = c_n \sum_{j=1}^n I_j^N(z,\zeta) \hht{d\ov \zeta_j} $ and 
\begin{equation} \label{IN_hg_form}
  I_j^N(z,\zeta) = c_d h_{j1}(z,\zeta) r^{-N} \left( g(r,\thh) - T_r^{N-1} [g](r,\thh) \right)   
 +c_d  h_{j2}(z) r^{-(N-1)} \left( g(r,\thh) - T_r^{N-2} [g](r,\thh) \right).
\end{equation}
The functions $g, h_{j1}, h_{j2}$ are given by 
\begin{gather*}
  g(r,\thh) =  \left[(r-e^{i\thh})(r-e^{-i \thh}) \right]^{-\frac{d}{2}};
  \\ 
   h_{j1}(z,\zeta) = - \frac{1}{|\zeta|} \left( \ov \zeta_j - \yh \frac{ \ov z_j}{|z|^2} \left( z\cdot \ov \zeta + \ov z \cdot \zeta \right) \right) - \frac{|z|^2 + |\zeta|^2 - |z-\zeta|^2}{2|z| |\zeta|} \cdot \frac{\ov z_j}{2 |z|}; \quad 
  h_{j2}(z) = \frac{\ov z_j}{2 |z|}. 
\end{gather*}
In particular, the $h_{j1}$ and $h_{j2}$ are homogeneous in $(z,\zeta)$ of degree $0$. 
\end{prop}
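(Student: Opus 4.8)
The plan is to start from the classical Bochner–Martinelli type representation formula, which for $u \in C^\infty_c(\C^n \setminus \{0\})$ solves $\dbar$ via a kernel built from the Newtonian potential $|z-\zeta|^{2-d}$ (recall $d=2n$); applying it to the weighted function $|z|^{-N}u(z)$ rather than to $u$ itself produces an extra term coming from the differentiation of the weight, and the combined kernel is the candidate for $I^N(z,\zeta)$. Concretely, I would write $|z|^{-N}u(z) = \int_{\C^n} B(z,\zeta)\wedge \dbar\bigl(|\zeta|^{-N} u(\zeta)\bigr)$, expand $\dbar(|\zeta|^{-N}u) = |\zeta|^{-N}\dbar u - N|\zeta|^{-N-1}(\partial_{\ov\zeta}|\zeta|)\, u\wedge(\cdots)$, and then integrate by parts / use the reproducing property of the Bochner–Martinelli kernel to absorb the $u$-term, leaving only a kernel paired against $|\zeta|^{-N+d-1}\wedge\dbar u(\zeta)$, which matches the stated form of $|z|^{-N}u(z)$. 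Tracking the components $\widehat{d\ov\zeta_j}$ carefully gives the splitting $I^N_j = c_d h_{j1} r^{-N}(g - T_r^{N-1}[g]) + c_d h_{j2} r^{-(N-1)}(g - T_r^{N-2}[g])$; the presence of the two Taylor-remainder factors at orders $N-1$ and $N-2$ reflects that the kernel acquires a homogeneity-lowering factor, and subtracting these Taylor polynomials is exactly what is allowed because each subtracted monomial $r^k$ integrates to a term of the form (holomorphic)$\times$(antiholomorphic) that, after pairing with $\dbar u$ of a compactly supported function away from $0$, contributes nothing by Stokes/the fact that $\dbar$ of a holomorphic function vanishes.

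The key computational input is Proposition 2.2: the function $g(r,\thh) = [(r-e^{i\thh})(r-e^{-i\thh})]^{-d/2}$ is, up to the factor $r^{2-d}$ and relabeling, precisely the generating function of the ultraspherical polynomials $P_m^{(\lambda)}(\cos\thh)$ with $\lambda = \tfrac{d-2}{2}$, since $(1-2r\cos\thh + r^2)^{-\lambda}$ is the standard generating series. Thus I would identify $|z-\zeta|^{2-d} = |\zeta|^{2-d}(1 - 2r\cos\thh + r^2)^{-\lambda}$, observe that $(1-2r\cos\thh+r^2)^{-\lambda} = r^{d-2} g(r,\thh)$ after the algebra $(r-e^{i\thh})(r-e^{-i\thh}) = r^2 - 2r\cos\thh + 1$, and use this to rewrite the Bochner–Martinelli kernel's radial dependence entirely in terms of $g$. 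The functions $h_{j1}(z,\zeta)$ and $h_{j2}(z)$ then emerge as the angular/directional coefficients: $h_{j2} = \tfrac{\ov z_j}{2|z|}$ is manifestly the contribution of $\partial_{\ov z_j}|z|$ (from differentiating the weight $|z|^{-N}$), while $h_{j1}$ collects $\ov\zeta_j/|\zeta|$-type terms from the Bochner–Martinelli kernel together with a correction involving $|z|^2+|\zeta|^2-|z-\zeta|^2 = 2|z||\zeta|\cos\thh$; one checks directly that both are homogeneous of degree $0$ in $(z,\zeta)$, which is forced since $r$ and $\thh$ are the only scale-invariant quantities and $g$, $h_{j1}$, $h_{j2}$ must carry all the $(z,\zeta)$-dependence through $(r,\thh)$ except for the explicit powers $r^{-N}$, $r^{-(N-1)}$.

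The main obstacle I anticipate is the bookkeeping of the Taylor-polynomial subtractions: one must verify that replacing $g$ by $g - T_r^{N-1}[g]$ (resp. $g - T_r^{N-2}[g]$) does not change the value of the integral against $|\zeta|^{-N+d-1}\wedge\dbar u(\zeta)$, i.e. that each subtracted term $r^k |\zeta|^{-N+d-1} h_{j\bullet}$ pairs to zero against $\dbar u$. Since $r^k = |z|^k|\zeta|^{-k}$ and $h_{j\bullet}$ is degree-$0$ homogeneous, the subtracted piece, viewed as a form in $\zeta$, is — after multiplying by $|\zeta|^{-N+d-1}$ — a finite sum of terms that are polynomial in $z$ and (anti)homogeneous in $\zeta$ of a degree making them either smooth across $\zeta = 0$ or exact; the cancellation is then a consequence of the original representation formula being valid (the left side $|z|^{-N}u(z)$ is unchanged) combined with the fact that the low-order Taylor part of the Newtonian potential's expansion contributes a harmonic (hence, in the relevant sense, $\dbar$-closed up to the BM structure) piece. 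I would carry this out by writing the difference of the two candidate kernels and showing it is $\dbar_\zeta$-exact in the region $\zeta \neq 0$ containing $\supp u$, so Stokes' theorem finishes it. Everything else — the estimate $|P_m^{(\lambda)}(\cos\thh)| \leq m^{d-3}$ and the homogeneity claims — is already in hand from Proposition 2.2 or is immediate.
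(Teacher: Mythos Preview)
Your starting point is the main problem. Applying the Bochner--Martinelli formula to $|\zeta|^{-N}u(\zeta)$ produces
\[
|z|^{-N}u(z)=\int B(z,\zeta)\wedge|\zeta|^{-N}\dbar u(\zeta)+\int B(z,\zeta)\wedge u(\zeta)\,\dbar_\zeta(|\zeta|^{-N}),
\]
and the second integral involves $u$ rather than $\dbar u$. Your proposal to ``integrate by parts / use the reproducing property'' to remove it is not a concrete step: integrating by parts would throw $\dbar_\zeta$ onto $B(z,\zeta)u(\zeta)$, and since $\dbar_\zeta B$ is the delta at $\zeta=z$ you simply recover the identity you started from. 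There is no mechanism here that converts the $u$-term into a $\dbar u$-term paired with a kernel of the claimed shape. Relatedly, your identification of $h_{j2}=\tfrac{\ov z_j}{2|z|}$ as coming from ``differentiating the weight $|z|^{-N}$'' is inconsistent with your own setup: you differentiate $|\zeta|^{-N}$ in $\ov\zeta$, which yields $\tfrac{\zeta_j}{2|\zeta|}$-type factors, not $\tfrac{\ov z_j}{2|z|}$.

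The paper takes a different and cleaner route that avoids the extra term entirely. It applies Bochner--Martinelli to $u$ itself, then subtracts from $B_j(z,\zeta)$ its degree-$(N-1)$ Taylor polynomial $P_j^{N-1}(z,\zeta)$ in the $z$-variable at $z=0$. The subtraction is justified because the identity $\sum_j\partial_{\ov\zeta_j}B_j(z,\zeta)=0$ for $\zeta\neq z$ survives $z$-differentiation at $z=0$, so the form $\sum_j P_j^{N-1}(z,\zeta)\,\widehat{d\ov\zeta_j}$ is $\dbar_\zeta$-closed on $\supp u$ and Stokes kills its contribution. This is not the ``holomorphic $\times$ antiholomorphic'' cancellation you describe; the Taylor coefficients are genuinely non-holomorphic functions of $\zeta$, and it is the inherited $\dbar_\zeta$-closedness of the full $(n,n-1)$ form that matters. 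One then sets $I_j^N=(|\zeta|/|z|)^N|\zeta|^{d-1}(B_j-P_j^{N-1})$ and, writing $B_j=c_d\,\partial_{z_j}\bigl(|\zeta-z|^{2-d}\bigr)=c_d\,\partial_{z_j}f(r,\thh)$ with $f(r,\thh)=[(r-e^{i\thh})(r-e^{-i\thh})]^{-(d-2)/2}$, applies the chain rule $\partial_{z_j}=(\partial_{z_j}r)\partial_r+(\partial_{z_j}\thh)\partial_\thh$. The two Taylor remainders at orders $N-1$ and $N-2$ arise because $\partial_r f=(2-d)(r-\cos\thh)g$ contains both an order-zero and an order-one factor in $r$ multiplying $g$, and the coefficients $h_{j1}$, $h_{j2}$ are exactly $|\zeta|[(r\sin\thh)\partial_{z_j}\thh-\cos\thh\,\partial_{z_j}r]$ and $|\zeta|\,\partial_{z_j}r$ respectively. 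In particular $h_{j2}$ comes from the chain-rule factor $\partial_{z_j}r$, not from differentiating any weight.
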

\begin{proof} 
Let $u \in C^\infty_c(\C^n \sm \{0\})$. We can write 
\[
 u(z) = \int_\Om B(z,\zeta) \we \db u (\zeta),  
\]
where $B(z,\zeta)$ is the Bochner-Martinelli formula given by 
\[
B(z,\zeta) = c_{n}
\sum_{j=1}^n B_j(z,\zeta) \hht{d \ov \zeta_j} , \quad c_n = 
\frac{(n-1)!}{(2 \pi i)^n}, \quad B_j(z,\zeta) = \frac{\ov {\zeta_j - z_j}}{|\zeta-z|^{2n}} 
\]
for $(z,\zeta) \in (\C^n \times \C^n) \sm\{ \zeta =z \} $. 
Here $\hht{d \ov \zeta_j}:= d \zeta_j 
\we \left( \bigwedge_{k \neq j} d \ov \zeta_k \we d \zeta_k \right) $. 

Given a non-negative integer $N$ and a fixed $\zeta \neq 0 $, let $P_j^N(\cdot,\zeta)$ be the degree $N$ Taylor polynomial of $B_j(z,\zeta)$ at $z=0$, and let 
$P^N (z,\zeta) = c_{2n} \sum_{j=1}^n P_j^N(z,\zeta) \hht{d\ov \zeta_j} $ . We have $\db_\zeta B(z,\zeta) = 0$ for $\zeta \neq z$, or
\[
 \sum_{j=1}^n \pa_{\ov \zeta_j} B_j(z,\zeta) = 0, \quad \zeta \neq z.  
\]
Since 
\[ 
D^\beta_\zeta (D_z^\all B_j(0,\zeta)) = (D^\beta_\zeta D_z^\all B_j)(0,\zeta)
= (D_z^\all D^\beta_\zeta B_j)(0,\zeta), 
\]
we have for $\zeta \neq 0$,  
\begin{align*}
\sum_{j=1}^n \pa_{\ov \zeta_j} P_j^N (z,\zeta) 
&= \sum_{j=1}^n \pa_{\ov \zeta_j} \sum_{|\all| + |\beta| \leq N}  \frac{D^\all_z B_j (0,\zeta)}{\all !} z^\all \ov z^\beta  
\\ &= \sum_{|\all| + |\beta| \leq N} \left( \left[ D_{z}^\all \sum_{j=1}^n  \pa_{\ov \zeta_j} B_j (z,\zeta) \right] (0,\zeta) \right) z^\all \ov z^\beta = 0, 
\end{align*}
i.e. $\db_\zeta P^N (z,\zeta) = 0$, for any $z \in \C^n$ and $\zeta \in \supp u$. Using Stokes theorem, we can write 
\[
  u(z) = \int_\Om \left[ B(z,\zeta) - P_{N-1}(z,\zeta) \right] \we \db u(\zeta)
  = c_{2n} \sum_{j=1}^n \int_\Om 
\left[B_j(z,\zeta) - P_j^{N-1}(z,\zeta) \right] \hht{d \ov \zeta_j} \we \db u (\zeta).  
\]
Define 
\[
 I_j^N(z,\zeta) = \left( \frac{|\zeta|}{|z|} \right)^{ N } |\zeta|^{ {d-1 } } \left( B_j (z,\zeta) - P_j^{N-1}(z,\zeta) \right),   
\]
and $I^N(z,\zeta) = c_d \sum_{j=1}^n I_j^N(z,\zeta) \hht{d\ov \zeta_j} $. 
Then we can write 
\[
  |z|^{-N} u(z) = \int_\Om I^N(z,\zeta) |\zeta|^{-(N+d-1)} \we \db u(\zeta). 
\]
We now write $B_j = \frac{\ov {\zeta_j - z_j}}{|\zeta-z|^d} 
=c_d \pa_{z_j} \left( \frac{1}{|\zeta-z|^{d-2}} \right)$, for $c_d = \frac{2-d}{2} $ and 
\[ 
 |\zeta|^{d-2} B_j (z,\zeta) 
 = c_d \pa_{z_j} \left( \frac{|\zeta|^{d-2}}{|\zeta-z|^{d-2}} \right) 
 =  c_d \pa_{z_j}  \left( \frac{|\zeta-z|}{|\zeta|} \right)^{-(d-2)}. 
\] 
Since $|\zeta|^{d-2} P_j^{N-1}(\cdot,\zeta)$ is the degree $N-1$ Taylor polynomial of $|\zeta|^{d-2}B_j(\cdot,\zeta)$ at $z = 0$, by the previous line we have 
\[
  |\zeta|^{d-2} P_j^{N-1} = c_d \pa_{z_j} T^N_z \left[ \left( \frac{|\zeta-z|}{|\zeta|} \right)^{-(d-2)} \right].  
\]
Now 
\[
  \left( \frac{|\zeta-z|}{|\zeta|} \right) ^{-(d-2)}=  \left( \frac{|\zeta-z|^2}{|\zeta|^2}  \right)^{-\frac{d-2}{2}} 
  = \left[(r-e^{i\thh})(r-e^{-i \thh}) \right]^{-\frac{d-2}{2}} := f(r,\thh)  . 
\]
Combining the above expression, we get 
\begin{equation} \label{IN_as_thh_r_der}   
\begin{aligned}  
   I_j^N (z,\zeta) 
&= r^{-N} |\zeta| \left[ |\zeta|^{d-2} B_j (z,\zeta) - |\zeta|^{d-2} P_j^{N-1}(z,\zeta)  \right]
\\ &=  c_d r^{-N} |\zeta| \left[ \pa_{z_j} \left( f(r,\thh)  - T^N_r f(r,\thh)  \right) \right] 
\\ &= c_d r^{-N} |\zeta| 
\left[ (\pa_{z_j} r) \pa_r \left( f(r,\thh) - T^N_r f(r,\thh) \right) + (\pa_{z_j} \thh) \pa_\thh \left( f(r,\thh)  - T^N_r f(r,\thh)  \right) \right]. 
\end{aligned}  
\end{equation} 
Here $T^N_r f(r,\thh)$ denotes the degree $N$ Taylor polynomial of $f(\cdot, \thh)$ at $r=0$.   

Now we have 
\begin{align*}
   \pa_r f(r,\thh) &= \pa_r \left[(r-e^{i\thh})(r-e^{-i \thh}) \right]^{-\frac{d-2}{2}}
   \\ &= - \frac{d-2}{2} \left[(r-e^{i\thh})(r-e^{-i \thh}) \right]^{-\frac{d}{2}} 
   \cdot \pa_r \left( r^2 - 2 r \cos \thh  + 1 \right) 
   \\ &= (2-d) (r-\cos \thh) g(r,\thh),  
\end{align*}
where we denote 
\[
  g(r,\thh) =  \left[(r-e^{i\thh})(r-e^{-i \thh}) \right]^{-\frac{d}{2}}. 
\] 
Similarly, we get 
\begin{align*}
 \pa_\thh f(r,\thh) = (2-d) (r \sin \thh) g(r,\thh).   
\end{align*} 
The corresponding Taylor expansions are 
\begin{align*}
\pa_r T^N_r f(r,\thh)  
&= T^{N-1}_r \pa_r f(r,\thh) 
\\ &= (2-d) T^{N-1}_r \left[ (r-\cos \thh) g(r,\thh) \right]  
\\ &= (2-d) \left[ r T^{N-2}_r g(r,\thh) - \cos \thh \cdot T^{N-1}_r g(r,\thh) \right],    
\end{align*}
and 
\begin{align*}
  \pa_\thh T^N_r f(r,\thh) 
 &= T^N_r [\pa_\thh f](r,\thh)
 \\&= T^N_r [(2-d) (r\sin \thh) g(r,\thh)](r,\thh)
 \\ &= (2-d) (r \sin \thh) \cdot T^{N-1}_r[g](r,\thh).
\end{align*}
Plug the expressions into \re{IN_as_thh_r_der} to get
\begin{equation} \label{IN_inter}  
\begin{aligned} 
    I_j^N (z,\zeta) 
&= c_d r^{-N} |\zeta| 
\left[ (\pa_{z_j} r)  \left( \pa_r f(r,\thh) - \pa_r T^N_r f(r,\thh) \right) + (\pa_{z_j} \thh) \left( \pa_\thh f(r,\thh)  -  \pa_\thh T^N_r f(r,\thh)  \right) \right]  
\\ &= (2-d) r^{-N} |\zeta| (\pa_{z_j} r) \left\{  r [g(r,\thh) - T^{N-2}_r [g](r,\thh) ] - \cos\thh \cdot [g(r,\thh) - T^{N-1}_r [g](r,\thh)] \right\} 
\\ &\quad + (2-d) r^{-N} |\zeta| (\pa_{z_j} \thh) 
\left\{ (r \sin \thh) \left( g(r,\thh) - T^{N-1}_r [g](r,\thh) \right) \right\}. 
\end{aligned} 
\end{equation}

Next we compute $\pa_{z_j} r$ and $\pa_{z_j} \thh$. We have
\begin{gather*}
 \pa_{z_j} r = \pa_{z_j} \left( \frac{|z|}{|\zeta|} \right) = \yh \frac{\ov z_j}{|\zeta| |z|}. 
\end{gather*} 
To compute $\pa_{z_j} \thh$, we use the law of cosine
\[
  |z-\zeta|^2 = |z|^2 + |\zeta|^2 - 2 |z| |\zeta| \cos \thh. 
\]
Hence
\[
  \cos \thh = \frac{|z|^2 + |\zeta|^2 - |z-\zeta|^2}{2|z| |\zeta|} = \frac{z \cdot \ov \zeta + \zeta \cdot \ov z}{2|z| |\zeta|}, 
\]
where we used $|z-\zeta|^2 = |z|^2 + |\zeta|^2 - (z \cdot \ov \zeta + \zeta \cdot \ov z) $. Take $\pa_{z_j}$ on both sides of the first equality above to get
\begin{align*}
 (- \sin \thh) \pa_{z_j} \thh &= \yh \pa_{z_j}
 \left( \frac{|z|}{|\zeta|} + \frac{|\zeta|}{|z|} 
- \frac{|z-\zeta|^2}{|z||\zeta|}\right) 
= \yh \pa_{z_j} \left( r + \frac{1}{r} - \frac{|z-\zeta|^2}{|z| |\zeta| } \right)
\\ &= \yh \pa_{z_j} \left( r + \frac{1}{r} - r -\frac{1}{r} +\frac{z \cdot \ov \zeta + \zeta \cdot \ov z}{|z| |\zeta|} \right)
= \yh \pa_{z_j} \left( \frac{z \cdot \ov \zeta + \zeta \cdot \ov z}{|z| |\zeta|} \right).
\end{align*}
Further computation gives 
\begin{align*}
(r \sin \thh) (\pa_{z_j} \thh) 
= - \yh \frac{|z|}{|\zeta|} \left( \frac{\ov \zeta_j}{|z| |\zeta|} - \yh \frac{ \ov z_j}{|z|^3 |\zeta|} \left( z\cdot \ov \zeta + \ov z \cdot \zeta \right) \right) .   
\end{align*}
The last expression is in $O(|\zeta|^{-1})$ as $\zeta \to 0$. 
In view of \re{IN_inter} and the above computations, we can write 
\begin{align*}
  I_j^N(z,\zeta) 
&= c_d h_{j1}(z,\zeta) r^{-N} \left( g(r,\thh) - T^{N-1}_r [g](r,\thh) \right)
\\&\quad + c_d h_{j2}(z,\zeta) r^{-(N-1)} \left( g(r,\thh) - T^{N-2}_r [g](r,\thh) \right) 
\end{align*} 
where \begin{align*}
 h_{j1}(z,\zeta) &:= |\zeta| \left[ r \sin \thh 
(\pa_{z_j} \thh)  - \cos \thh (\pa_{z_j } r)\right]
\\ &= - \yh \frac{1}{|\zeta|} \left( \ov \zeta_j - \yh \frac{ \ov z_j}{|z|^2} \left( z\cdot \ov \zeta + \ov z \cdot \zeta \right) \right) - \frac{z \cdot \ov \zeta + \zeta \cdot \ov z }{2|z| |\zeta|} \cdot \frac{\ov z_j}{2 |z|}; 
\end{align*}
and
\begin{gather*}
  h_{j2}(z,\zeta) := |\zeta| \pa_{z_j } r =
\yh |\zeta| \frac{\ov{z_j}}{|\zeta| |z|} 
= \yh \frac{\ov z_j}{|z|}. \qedhere
\end{gather*} 
\end{proof} 

\begin{prop} \label{Prop::IN-1_estimate_1} 
Let $z, \zeta \neq 0$. There exists constant $C_{d,k}>0$ such that
 \begin{enumerate}[(i)]
 \item 
\[  
  \left| D_z^k \left[ I_j^N (z,\zeta) - |\zeta|^{d-1} \left( \frac{|\zeta|}{|z|} \right)^{N} B_j (z,\zeta) \right] \right| 
\leq C_{d,k} N^{d-1+k} |\zeta|^{-k}, \quad 
\text{when $|z-\zeta| < \frac{|\zeta|}{2N}$}. 
\] 
\item 
\[
  \left| D_z^k I_j^N (z,\zeta) \right| 
  \leq C_{d,k} N^{d-2+k} \min \{ N, |1-r|^{-1} \} |z|^{-k}, \quad 
  \text{when $|z-\zeta| > \frac{|\zeta|}{2N}$}. 
\]
\end{enumerate}
\end{prop}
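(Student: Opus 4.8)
The plan is to read $I_j^N$ off the two equivalent expressions already available: formula \re{IN_hg_form} itself, and the form
\[
 I_j^N=c_d\,r^{-N}|\zeta|\Bigl[(\pa_{z_j}r)\,\pa_r(f-T_r^{N}[f])+(\pa_{z_j}\thh)\,\pa_\thh(f-T_r^{N}[f])\Bigr],\qquad f(r,\thh)=\bigl[(r-e^{i\thh})(r-e^{-i\thh})\bigr]^{-\frac{d-2}{2}},
\]
which appears in the proof of \rp{Prop::IN_hg_form}. Since $I_j^N$, $h_{j1}$, $h_{j2}$, $\pa_{z_j}r$, $\pa_{z_j}\thh$ and $|\zeta|^{d-1}(|\zeta|/|z|)^NB_j$ are all homogeneous of degree $0$ in $(z,\zeta)$ while the two asserted inequalities are homogeneous of degree $-k$, one may rescale and assume $|\zeta|=1$. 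The coefficient functions $h_{j1}$, $h_{j2}=\ov z_j/(2|z|)$, $\pa_{z_j}r$, $\sin\thh\,\pa_{z_j}\thh$ are smooth and homogeneous of degree $0$ (resp.\ $-1$) in $z$ off $z=0$, so their $D_z$-derivatives obey $|D_z^ah_{j1}|,|D_z^ah_{j2}|\ls_a|z|^{-a}$ and $|D_z^a\pa_{z_j}r|,|D_z^a(\sin\thh\,\pa_{z_j}\thh)|\ls_a|z|^{-1-a}$; applying the Leibniz rule thus reduces both parts to scalar estimates on the one-variable Taylor remainders $r^{-N}(g-T_r^{N-1}[g])$, $r^{-N}(g-T_r^{N-2}[g])$, $r^{-N}\pa_r(f-T_r^{N}[f])$, $r^{-N}\pa_\thh(f-T_r^{N}[f])$, each $D_z$ landing on $r^{-N}$ costing at most $\ls N|z|^{-1}$ and each landing on $\cos\thh$ costing $\ls|z|^{-1}$. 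A structural feature special to $\C^n$ which I would exploit is that $d=2n$ is \emph{even}, so $f(\cdot,\thh)$, $g(\cdot,\thh)$, $\pa_r f(\cdot,\thh)$, $\pa_\thh f(\cdot,\thh)$ are \emph{rational} functions of $r$ with poles only at $e^{\pm i\thh}$ (of order $\le n$) and with integrands of the type $\tfrac{\pa_wf(w,\thh)}{w^N(w-r)}$ decaying at $\infty$.

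For part (i) we are in the range $|z-\zeta|<\tfrac{|\zeta|}{2N}$, so $|z|\approx|\zeta|$ and $r=|z|/|\zeta|\in(1-\tfrac1{2N},1+\tfrac1{2N})$; hence $r^{\pm N}$ and $r^{\pm m}$ for $0\le m\le N$ are all bounded by absolute constants and the naive term-by-term bounds lose nothing. From the proof of \rp{Prop::IN_hg_form}, $I_j^N-|\zeta|^{d-1}(|\zeta|/|z|)^NB_j=-c_d\,r^{-N}|\zeta|\,\pa_{z_j}T_z^N[f]$. Expanding $f$ by \rp{Prop::ul_sp_est} (with $\la=\tfrac{d-2}{2}$) writes $T_z^N[f]=\sum_{m=0}^N Y_m$, where $Y_m(z)=P^{(\la)}_m(\cos\thh_{z\zeta})(|z|/|\zeta|)^m$ is a homogeneous harmonic polynomial of degree $m$ in $z$ with $\|Y_m\|_{L^\infty(|z|=|\zeta|)}\le m^{d-3}$; the Bernstein–Markov inequality for harmonic polynomials (obtained by iterating $\|\nabla_{S^{d-1}}Y_\mu\|_\infty\ls\mu\|Y_\mu\|_\infty$) then gives $|D_z^lY_m(z)|\ls_{d,l}m^{l+d-3}(|z|/|\zeta|)^m|z|^{-l}$. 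Applying $D_z^k$, the Leibniz rule, and using $r^{\pm}\approx1$,
\[
 \bigl|D_z^k\bigl(I_j^N-|\zeta|^{d-1}(|\zeta|/|z|)^NB_j\bigr)\bigr|\ls_{d,k}\sum_{k_1+k_2=k}N^{k_1}|z|^{-k_1}\cdot|\zeta|^{-k_2}\sum_{m=1}^N m^{k_2+d-2}\ls_{d,k}N^{k+d-1}|\zeta|^{-k},
\]
which is (i) since $|z|\approx|\zeta|$.

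For part (ii), $|z-\zeta|>\tfrac{|\zeta|}{2N}$. Writing $\delta:=|z-\zeta|/|\zeta|$ one checks $\delta^2=1+r^2-2r\cos\thh=|r-e^{i\thh}|^2$, so the hypothesis is $\delta>\tfrac1{2N}$; moreover $\delta\ge|1-r|$ and $\delta\ge\sin\thh$, so $\delta^{-1}\le2\min\{N,|1-r|^{-1}\}$. I would use the second expression for $I_j^N$. The gain of one power of $N$ over the naive term-by-term treatment of \re{IN_hg_form} (whose $g$ has ultraspherical coefficients of size $\sim m^{d-1}$) comes from passing to $\pa_r f$ and $\pa_\thh f$, which are \emph{first derivatives} of $f$ and hence have coefficients $\ls m\cdot m^{d-3}=m^{d-2}$; equivalently, this is the cancellation between the $h_{j1}$- and $h_{j2}$-terms. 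Since each of $\pa_r f(\cdot,\thh),\pa_\thh f(\cdot,\thh)$ is rational with poles only at $e^{\pm i\thh}$ and vanishes at $\infty$, pushing the contour in the Cauchy representation of the Taylor remainder out to $\infty$ gives
\[
 r^{-N}\pa_r(f-T_r^{N}[f])=-\!\!\sum_{w_0\in\{e^{i\thh},e^{-i\thh}\}}\!\!\Res_{w=w_0}\frac{\pa_wf(w,\thh)}{w^N(w-r)},
\]
and likewise for $\pa_\thh$; these residues are $(n-1)$-st derivatives at $e^{\pm i\thh}$ of $\tfrac{(w-\cos\thh)}{(w-e^{\mp i\thh})^nw^N(w-r)}$ (and its $\pa_\thh$-analogue). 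I would estimate the residue \emph{sums} (not the individual residues — their coalescence as $\thh\to0$ is exactly what keeps them bounded) by the Leibniz rule, using $|e^{\pm i\thh}-r|=\delta$, $|e^{i\thh}-e^{-i\thh}|=2\sin\thh$, $\delta\ge\sin\thh$, $\delta^{-1}\le2N$ and $\sin\thh\,|\pa_{z_j}\thh|\ls|z|^{-1}$, to obtain $\ls_d N^{d-2}\min\{N,|1-r|^{-1}\}|z|^{-1}$ for each of the two terms of $I_j^N$; distributing $D_z^k$ by Leibniz/chain rule as in part (i) then yields (ii). (The case $\thh=0$, a single pole of order $2(n-1)$, is handled the same way, and is precisely the case where $\delta=|1-r|$.)

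The main obstacle is part (ii): one must estimate the residue sums \emph{uniformly} over all configurations of the pair of poles $e^{\pm i\thh}$ relative to the unit circle and to $r$ (subject only to $\delta>\tfrac1{2N}$), controlling negative powers of $\sin\thh$ against $N$ and $\delta^{-1}$ and relying on the cancellation/coalescence of the two residues when $\thh$ is small; checking that the resulting power of $N$ is $d-2$ and not $d-1$ — which fails for the two terms of \re{IN_hg_form} individually — is the crux. The remaining work, namely commuting $D_z^k$ past the Taylor truncations and past the change of variables $z\mapsto(r,\thh)$ (whose angular part degenerates along $z\in\R_{>0}\zeta$), is routine but lengthy; the Bernstein–Markov inequality used in part (i) is classical.
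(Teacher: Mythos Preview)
Your argument for (i) is essentially the paper's: both identify $I_j^N-|\zeta|^{d-1}(|\zeta|/|z|)^NB_j$ with the truncated zonal expansion $\sum_{m\le N}\pa_{z_j}Z_m$, use $|Z_m|\ls m^{d-3}|z|^m$, and control derivatives of the degree-$m$ harmonic by a factor $m^k$ (you invoke Bernstein--Markov, the paper uses the interior gradient estimate; these are equivalent here).

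For (ii) your route diverges from the paper's and has a real gap. You propose to estimate the \emph{sum} of the two residues ``by the Leibniz rule'', but Leibniz applied to a single residue produces terms of size $N^a\,|\sin\thh|^{-d/2-c}\,\delta^{-1-b}$ with $a+b+c\le d/2-1$. The inequality you cite, $\delta\ge\sin\thh$, gives $|\sin\thh|^{-1}\ge\delta^{-1}$, which goes the \emph{wrong} way: it does not let you absorb the negative powers of $\sin\thh$ into powers of $N$ or of $\delta^{-1}\le 2N$. The cancellation between the residues at $e^{i\thh}$ and $e^{-i\thh}$ as $\thh\to0$ is a correct heuristic, but you never supply a quantitative version (e.g.\ a contour enclosing both poles on which the integrand is controlled, which is itself delicate because $|w|^{-N}$ can be exponentially large off the unit circle). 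Your own ``main obstacle'' paragraph concedes exactly this. The residue machinery is what the paper uses later (\rp{Prop::g-Tg}), but only under the standing hypothesis $|\sin\thh|\gtrsim N^{-1}$, where the $|\sin\thh|^{-d/2}$ factors are harmless.

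The paper's proof of (ii) avoids this issue entirely by \emph{reusing} the zonal expansion from (i), which is uniform in $\thh$ thanks to $|P_m^{(\la)}(\cos\thh)|\ls m^{d-3}$. After scaling to $|\zeta|=1$, it splits into three radial regimes. For $|z|<1-\tfrac1N$ it writes $|z|^NI_j^N=c_d\sum_{m>N}\pa_{z_j}Z_m$ and sums the tail as a geometric series to get $N^{d-2+k}(1-|z|)^{-1}|z|^{-k}$. For $|z|>1+\tfrac1N$ it bounds both the finite head $\sum_{m\le N}$ and $|z|^{-N}B_j$ separately, again by geometric sums. For $|z|\in(1-\tfrac1N,1+\tfrac1N)$ it uses the part-(i) bound $N^{d-1+k}$ on the head together with a direct estimate of $|z|^{-N}B_j$ coming only from $|z-\zeta|>\tfrac1{2N}$ and $|z-\zeta|\ge\bigl|1-|z|\bigr|$. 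No residues, no $\sin\thh$, no cancellation to track.
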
 
\begin{proof}
(i) We first reduce the proof to the case $|\zeta| =1$. 
Denote $\wti \zeta= \la \zeta$, $\wti z= \la z$, with $\la = |\zeta|^{-1}$. Then $|\wti \zeta| = 1$. Suppose the estimate holds for $\wti \zeta$ and $\wti z$:  
\[ 
\left| D_{\wti z}^k \left[ I_j^N (\wti z, \wti \zeta) - |\wti \zeta|^{d-1} \left( |\wti \zeta| |\wti z|^{-1}  \right)^N B_j (\wti z, \wti \zeta) \right] \right| \leq C_{d,k} N^{d-1+k}, \quad |\wti z - \wti \zeta| < \frac{|\wti \zeta|}{2N}.  
\]
The left-hand side is equal to 
\begin{align*}
  \left| D_{\wti z}^k \left[ |\wti \zeta|^{d-1} \left( |\wti \zeta| |\wti z|^{-1} \right)^N P_j^{N-1} (\wti z, \wti \zeta) \right] \right|    
&= \la^{d-1-k} \la^{-(d-1)} \ \left| D_z^k \left[ |\zeta|^d \left( |\zeta||z|^{-1} \right)^N P_j^{N-1} (z,\zeta)\right] \right| 
\\ &= \la^{-k} \left| D_z^k \left[ |\zeta|^d \left( |\zeta||z|^{-1} \right)^{N-1} P_j^{N-1} (z,\zeta)\right] \right|. 
\end{align*}
Thus 
\[
  \left| D_z^k \left[ I_j^{N-1} (\wti z, \wti \zeta) - |\wti \zeta|^d \left( |\wti \zeta| |\wti z|^{-1}  \right)^{N-1} B_j (\wti z, \wti \zeta) \right] \right| \leq C_{d,k} N^{d-1+k} \la^{k} = N^{d-1+k} |\zeta|^{-k}, 
\]
which proves the reduction.  

We show that for $|\zeta_0| = 1$,  
\[
  \left| D_z^k \left[ I_j^N - |z|^{-N}  B_j (z, \zeta_0) \right] \right| 
\leq C_{d,k}  N^{d-1+k} , \quad 
\text{when $|z-\zeta_0| < \frac{1}{2N}$}.
\]
Write 
\begin{equation} \label{der_diff_PN}  
  \left| D_z^k \left[ |z|^{N} \left( I_j^N - |z|^{-N}  B_j (z, \zeta_0) \right) \right] \right| 
  = |D_z^k P_j^{N-1}(z, \zeta_0) |. 
\end{equation}
By \rp{Prop::ul_sp_est}, we can expand the Newtonian potential using zonal harmonics:  
\[ 
 \frac{1}{|\zeta_0-z|^{d-2}}  = \sum_{l=0}^\infty Z_l(z), \quad |z| < 1, 
\]
where $Z_l$ satisfies the estimates $|Z_l(z)| \lesssim l^{d-3} |z|^l$. Since $P_j^{N-1}(\zeta_0, \cdot)$ is the degree $N-1$ Taylor polynomial of $B_j(\zeta_0,\cdot) = 
c_d \pa_{z_j} \frac{1}{|\zeta_0-z|^{d-2}}$ at $z =0$, we can write $P_j^{N-1}(\zeta_0, \cdot)$ as 
\[
  P_j^{N-1}(\zeta_0, \cdot) = T^{N-1} \left(\pa_{z_j} \frac{1}{|\zeta_0-z|^{d-2}} \right) 
  = \pa_{z_j} T^N \left(\frac{1}{|\zeta_0-z|^{d-2}} \right) 
  = \sum_{m=1}^{N} \pa_{z_j} Z_m(z). 
\]
Since each $Z_m$ is harmonic, we get (cf. \cite[Proprosition 1.13]{H-L_11} 
\[
  |D^k Z_m (z) | \leq \frac{C_{d,k}}{r^k} \sup_{B_r (z)} |Z_m|, \quad C_{d,k} = e^k e^{k-1} k!. 
\]
Choose $r= |z|/m \neq 0$. Then $|D^k Z_m | \leq C_{d,k} m^{d-3+k} |z|^{m-k}$. Thus 
\[
\left| D_z^k P_j^{N-1}(z, \zeta_0) \right| 
\leq \sum_{m=1}^N |D_z^{k+1} Z_m (z)| 
\leq C_{d,k} \sum_{m=1}^N m^{d-2+k} |z|^{m-k-1}  
  \leq N^k |z|^{-k} \sum_{m=1}^N m^{d-2} |z|^{m-1}. 
 \]
In view of \re{der_diff_PN} and the product rule, we have 
\begin{equation} \label{I-B_init_est} 
  \left| D_z^k \left[ I_j^N(z, \zeta_0) - |z|^{-N}  B_j (z, \zeta_0) \right] \right| \leq C_{d,k} N^k |z|^{-k -N } \sum_{m=1}^N m^{d-2} |z|^{m-1}.   
\end{equation}
For $|z-\zeta_0| < \frac{1}{2N}$, we have $|z|^{-k-N+m-1} \leq C_{d,k}$. Hence 
\begin{equation} \label{I-B_z~1_est}  
  \left| D_z^k \left[ I_j^N(z, \zeta_0) - |z|^{-N}  B_j (z, \zeta_0) \right] \right| 
 \leq C_{d,k} N^k \sum_{m=1}^N m^{d-2} \leq N^{d-1+k}.  
\end{equation}

The proof of (i) is complete. 
\\ 

Next we prove (ii). Again we first show that the proof can be reduced to the case $|\zeta|=1$. Suppose the estimate holds for $\wti \zeta = \la \zeta, \wti z = \la z$, for $\la = |\zeta|^{-1}$. For $|z-\zeta| \geq \frac{|\zeta|}{2N}$, we have $|\wti z - \wti \zeta| \geq \frac{|\wti \zeta|}{2N} = \frac{1}{2N}$, and 
\begin{equation} \label{I_N_scaled_est}  
    \left| D_{\wti z}^kI_j^N (\wti z, \wti \zeta) \right| 
  \leq C_{d,k} N^{d-2+k} \min \{ N, |1-r|^{-1} \} |\wti z|^{-k}.   
\end{equation}
We have 
\begin{align*}
I_j^N (\wti z, \wti \zeta) 
&= |\wti \zeta|^{d-1} \left( \frac{|\wti \zeta|}{|\wti z|} \right)^N \left( B_j (\wti z, \wti \zeta) - P_j^{N-1}(\wti z, \wti \zeta) \right) 
\\ &= \la^{(d-1)-(d-1)} |\zeta|^{d-1} \left( \frac{ |\zeta|}{|z|} \right)^{N} \left( B_j (z,\zeta) - P_j^{N-1} (z,\zeta) \right) 
\\ &=I_j^N (\zeta,z). 
\end{align*}
Thus
\[
 |D_{\wti z}^kI_j^N (\wti z, \wti \zeta) |
= \la^{-k} |D_z^k I_j^N (z,\zeta)|. 
\]
Together with \re{I_N_scaled_est}, we obtain 
\begin{align*}
 |D_z^kI_j^N (z,\zeta)|
&\leq C_{d,k} \la^{k} N^{d-2+k} \min \{ N, |1-r|^{-1} \} \la^{-k} |z|^{-k} 
\\ &= C_{d,k} N^{d-2+k} \min \{ N, |1-r|^{-1} \} |z|^{-k}, 
\end{align*}
which proves the reduction. 

We now fix $\zeta_0$ with $|\zeta_0| =1$ and show that 
\[
  |D_z^kI_j^N (z, \zeta_0) | 
  \leq C_{d,k} N^{d-2+k} \min \{ N, |1-r|^{-1} \} |z|^{-k}, \quad r = |z|, \quad |z-\zeta_0| > \frac{1}{2N}. 
\]
If $|z-\zeta_0| > \frac{1}{2N}$ and $|z| \in (1-\frac{1}{N}, 1+ \frac{1}{N})$, (i.e. $|1-|z||^{-1} > N$), the same argument as above shows that \re{I-B_z~1_est} holds: 
\begin{equation} \label{I-B_z~1_est_copy}   
   \left| D_z^k \left[ I_j^N(z, \zeta_0) - |z|^{-N}  B_j (z, \zeta_0) \right] \right| 
 \leq C_{d,k} N^{d-1+k} \leq C_{d,k} N^{d-1+k} |z|^{-k}.    
\end{equation} 
The last inequality holds since $|z| \lesssim  1$.   
Suppose $|z-\zeta_0| > \frac{1}{2N}$ and $|z| > 1- \frac{1}{N}$. We have 
\begin{align*}
  \left| D_z^k \left[ |z|^{-N}  B_j (z, \zeta_0) \right] \right| 
&= \left| \sum_{k_1+ k_2 =k} \na^{k_1}_z \left( |z|^{-N} \right)D_z^{k_2} \left( \frac{\ov{(\zeta_0)_j -z_j} }{|z-\zeta_0|^d} \right) \right|  
\\ &\leq C_{d,k} N^{k_1} |z|^{-N-k_1} |z-\zeta_0|^{-(d-1)-k_2}
\\ &\leq C_{d,k} N^{k_1} |z|^{-k_1} |z-\zeta_0|^{-(d-1)-k_2},  
 \end{align*}
 where in the last step we used $|z| > 1- \frac{1}{N}$ and thus $|z|^{-N} \leq (1-\frac{1}{N})^{-N}$.  
By the condition $|z-\zeta_0| > \frac{1}{2N}$, we have 
\[
 |z-\zeta_0|^{-(d-1)-k_2} 
 \leq C_{d,k} N^{d-1+k_2} |z|^{-k_2}. 
\] 
One can check the above inequality by considering the two cases: $|z| \geq 2 |\zeta_0| = 2$ and $ |z| \leq 2$.   
Therefore $ \left| D_z^k \left[ |z|^{-N}  B_j (z, \zeta_0) \right] \right| \leq C_{d,k} N^{d-1+k} |z|^{-k}$.   

On the other hand, since $|z-\zeta_0| \geq \left| 1 - |z| \right|$, by similar reasoning as above, 
\[
  |z-\zeta_0|^{-(d-1)-k_2} \leq C_{d,k}  N^{d-2+k_2}  |1-|z||^{-1} |z|^{-k_2}, 
\]
which implies that $ \left| D_z^k \left[ |z|^{-N}  B_j (z, \zeta_0) \right] \right| \leq C_{d,k} N^{d-2+k} |1-|z||^{-1} |z|^{-k}$. Hence 
\begin{equation} \label{z_N-1_B_est}  
   \left| D_z^k \left[ |z|^{-N} B_j (z, \zeta_0) \right] \right| \leq C_{d,k} 
  N^{d-2+k} \min \{N, |1-|z||^{-1} \} |z|^{-k} 
\end{equation}  when $ |z-\zeta_0| > \frac{1}{2N}$ and $|z| > 1- \frac{1}{N}$.

Combining \re{I-B_z~1_est_copy} and \re{z_N-1_B_est}, and using the fact that $N < |1-|z||^{-1}$ for $|z| \in 
(1-\frac{1}{N}, 1+ \frac{1}{N})$, we have  
\begin{equation} \label{IN-1_est_case1}  
|D_z^k I_j^N (z, \zeta_0) | 
\leq C_{d,k} N^{d-2+k} \min \{ N,|1-|z||^{-1} \}  |z|^{-k}, \: |z-\zeta_0|> \frac{1}{2N}, \:  |z| \in \left( 1-\frac{1}{N}, 1+ \frac{1}{N} \right).  
\end{equation}
When $|z| > 1+ \frac{1}{N}$, estimate \re{z_N-1_B_est} still holds. In this case we have $(|z|-1)^{-1} \leq N$, thus \re{z_N-1_B_est} reads  
\begin{equation} \label{z_N-1_B_est_case2}
   \left| D_z^k \left[ |z|^{-N}  B_j (z, \zeta_0) \right] \right| 
\leq C_{d,k} N^{d-2+k} (|z|-1)^{-1} |z|^{-k}.   
\end{equation}
From \re{I-B_init_est}, we get  
\begin{align*}
  \left| D_z^k \left[I_j^N(z, \zeta_0) - |z|^{-N}  B_j (z, \zeta_0) \right] \right| 
  &\leq C_{d,k} N^k |z|^{-k -N} \sum_{m=1}^N m^{d-2} |z|^{m-1}
\\ &\leq C_{d,k} N^{d-2+k} |z|^{-k-N} \sum_{m=1}^N |z|^{m-1}.
\end{align*}
By the geometric sum formula, we get 
$\sum_{m=1}^N |z|^{m-1} = (|z|-1)^{-1} (|z|^N-1)$, hence the last expression is bounded by
\[
  C_{d,k} N^{d-2+k} (|z| -1)^{-1} (|z|^N-1) |z|^{-k-N} \leq C_{d,k} N^{d-2+k} (|z| -1)^{-1} |z|^{-k}, \quad |z| > 1+\frac{1}{N}.  
\]
Thus $\left| D_z^k \left[ I_j^N(z, \zeta_0) - |z|^{-N}  B_j (z, \zeta_0) \right] \right| \leq C_{d,k} N^{d-2+k} (|z|-1)^{-1} |z|^{-k}$. 
Together with \re{z_N-1_B_est_case2}, we get 
\begin{equation} \label{IN-1_est_case2} 
|D_z^kI_j^N (z, \zeta_0) | \leq C_{d,k} N^{d-2+k} (|z|-1)^{-1} |z|^{-k}, \quad |z-\zeta_0|> \frac{1}{2N} \: \text{and} \:  |z| > 1 + \frac{1}{N}. 
\end{equation}
Finally, suppose $|z| < 1 - \frac{1}{N}$. We have
\begin{align*}
 |z|^{N}I_j^N(z, \zeta_0) 
  &= B_j (z,\zeta_0) - P_{N-1}^j(z,\zeta_0) 
 = c_d \pa_{z_j} \left( \frac{1}{|z-\zeta_0|^{d-2}} - \sum_{m \leq N} Z_m \right) 
 = c_d \pa_{z_j} \left( \sum_{m \geq N+1} Z_m \right).  
\end{align*}
 
By summing up a geometric series: 
\begin{align*}
  \left| D^k_z (|z|^{N} I^N_j(z,\zeta_0) ) \right| 
  &\leq C_{d,k}   \sum_{m \geq N+1} |D_z^{k+1} Z_m | 
  \leq C_{d,k} \sum_{m\geq N+1} m^{d-3+(k+1)} |z|^{m-k-1}
  \\ &\leq C_{d,k} N^{d-2+k} |z|^{N-k} (1-|z|)^{-1}.  
\end{align*}
Using the product rule, we have 
\[
  |D^k_z I_j^N(z, \zeta_0)| 
  \leq C_{d,k} N^{d-2+k} (1-|z|)^{-1} |z|^{-k} . 
\]
The proof is now complete.
\end{proof}
\begin{lemma} \label{Lem::triassic} 
Let $a \in \R$ and $\thh \in [0,2\pi]$. Then 
\[
  |a - e^{i \thh}| \gtrsim 
 |a-1| + |\sin \thh|.   
\]
\end{lemma}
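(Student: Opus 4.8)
The plan is to reduce the two-term lower bound to two separate one-term bounds. Expanding $e^{i\thh}=\cos\thh+i\sin\thh$ gives the identity
\[
 |a-e^{i\thh}|^2=(a-\cos\thh)^2+\sin^2\thh = a^2-2a\cos\thh+1 .
\]
From the middle expression one reads off at once that $|a-e^{i\thh}|^2\geq \sin^2\thh$, hence $|a-e^{i\thh}|\geq|\sin\thh|$, with no hypothesis on $a$ or $\thh$ needed.

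For the complementary bound I would rewrite $a^2-2a\cos\thh+1=(a-1)^2+2a(1-\cos\thh)$. Since $1-\cos\thh\geq 0$ always, and $a\geq 0$ in the range that matters — in every application $a$ is the ratio $r=|z|/|\zeta|$, so although the statement is phrased for $a\in\R$ the relevant case is $a\geq 0$ — the cross term $2a(1-\cos\thh)$ is nonnegative, whence $|a-e^{i\thh}|^2\geq(a-1)^2$, i.e.\ $|a-e^{i\thh}|\geq|a-1|$.

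Combining the two estimates, $|a-e^{i\thh}|\geq\max\{|a-1|,|\sin\thh|\}\geq\tfrac12\big(|a-1|+|\sin\thh|\big)$, which is precisely the assertion with implied constant $\tfrac12$. The argument is entirely elementary and I do not anticipate any obstacle; the only point that deserves a moment's care is the sign of the cross term $2a(1-\cos\thh)$, which is exactly where nonnegativity of $a$ enters. (For a genuinely arbitrary real $a$ the same computation yields only $|a-e^{i\thh}|\gtrsim\big||a|-1\big|+|\sin\thh|$, using $|a-e^{i\thh}|\geq\big||a|-1\big|$ from the triangle inequality, so the stated form with $|a-1|$ really does use $a\geq0$.)
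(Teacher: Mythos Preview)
Your proof is correct and in fact cleaner than the paper's. The paper starts from the same identity $|a-e^{i\thh}|^2=(a-\cos\thh)^2+\sin^2\thh$ but then argues by a case split on whether $|a-\cos\thh|>\tfrac12|a-1|$ or not, using in the second case the chain $|\sin\thh|\geq\sin^2\thh=1-\cos^2\thh\geq 1-|\cos\thh|\geq|1-a|-|a-|\cos\thh||$. Your approach bypasses this entirely by rewriting $|a-e^{i\thh}|^2=(a-1)^2+2a(1-\cos\thh)$ and reading off $|a-e^{i\thh}|\geq|a-1|$ directly when $a\geq0$. Both arguments ultimately require $a\geq0$: yours uses it for the sign of the cross term, while the paper's step $|a-|\cos\thh||\leq|a-\cos\thh|$ also fails for negative $a$ (take $a=-1$, $\thh=\pi$, where the left side is $2$ and the right side is $0$). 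Your observation that the lemma as literally stated for $a\in\R$ is false --- at $a=-1$, $\thh=\pi$ one has $|a-e^{i\thh}|=0$ but $|a-1|+|\sin\thh|=2$ --- is a point the paper overlooks, and your remark that only $a=r=|z|/|\zeta|\geq0$ is ever used is exactly right.
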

\begin{proof}
The proof can be done by considering two cases.  
We have
\[
 |a - e^{i \thh}|^2 = (a- \cos \thh)^2 + \sin^2 \thh \geq \yh \left[ |a- \cos \thh)| + |\sin \thh| \right]^2.    
\]
Hence $|a - e^{i \thh}| \gtrsim  |a-\cos \thh| + |\sin \thh| $. 
If $|a- \cos \thh| > \yh |a-1|$, we are done. Suppose $|a- \cos \thh| \leq \yh |a-1|$. Then $|a - |\cos \thh| | \leq |a-\cos \thh| \leq \yh |a-1| $ and  
\begin{align*}
|\sin \thh| \geq \sin^2 \thh = 1 - \cos^2 \thh 
  \geq 1- |\cos \thh| 
  \geq |1-a| - |a-  |\cos \thh| | 
\geq \yh |a-1|, 
\end{align*}
which implies that $|a-e^{i \thh}| \gtrsim |\sin \thh| \gtrsim (|\sin \thh| + |a-1|)$. 
\end{proof} 
\begin{prop} \label{Prop::g-Tg}  
Let $ g(r,\thh) :=  \left[(r-e^{i\thh})(r-e^{-i \thh}) \right]^{-\frac{d}{2}}$. If $|\sin \thh| \gtrsim \frac{1}{N}$, then we can write 
\begin{equation} \label{g-Tg_a_form}   
 r^{-N} \left( g(r,\thh) - T^{N-1} [g](r,\thh) \right) = \RE \left[ a(r,\thh) e^{iN \thh} \right] 
\end{equation}
for suitable complex-valued function $a$ satisfying 
\begin{gather} \label{a_der_est}  
\left| \frac{\pa^i}{\pa r^i} \frac{\pa^k}{\pa \thh^k} a(r,\thh) \right| 
\leq C_{d,i,k} N^{\frac{d}{2}-1} |\sin \thh|^{-\frac{d}{2} -k} (|\sin \thh| + |1-r|)^{-1-i}. 
\end{gather}   

\end{prop}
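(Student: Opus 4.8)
\emph{Proposal.} The plan is to reduce $g(\cdot,\theta)$ to a sum of simple fractions in $r$, treat each one exactly, and reassemble. Since $d=2n$ is even, $\tfrac d2=n$ is an integer, so
\[
 g(r,\theta)=(r-e^{i\theta})^{-n}(r-e^{-i\theta})^{-n}
\]
is a \emph{rational} function of $r$ with poles of order $n$ at the two points $e^{\pm i\theta}$, which are distinct because $\sin\theta\neq0$. A partial-fraction decomposition gives
\[
 g(r,\theta)=\sum_{j=1}^n\Big[A_j(\theta)(r-e^{i\theta})^{-j}+\overline{A_j(\theta)}\,(r-e^{-i\theta})^{-j}\Big],
\]
where, by the residue formula together with $e^{i\theta}-e^{-i\theta}=2i\sin\theta$, one computes $A_j(\theta)=c_{n,j}(2i\sin\theta)^{-(2n-j)}$ for nonzero constants $c_{n,j}$ (the coefficients at the conjugate pole equal $\overline{A_j}$ because $g$ is real on $\R$). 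In particular $|\partial_\theta^m A_j|\lesssim_{n,j,m}|\sin\theta|^{-(2n-j)-m}$. The hypothesis $|\sin\theta|\gtrsim N^{-1}$ will be invoked only at the very end, as the device that converts surplus powers of $|\sin\theta|^{-1}$ into powers of $N$.

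Next I compute the order-$N$ Taylor remainder at $r=0$ of each simple factor. Writing $r-e^{i\theta}=-e^{i\theta}(1-re^{-i\theta})$, expanding $(1-w)^{-j}=\sum_{m\ge0}\binom{m+j-1}{j-1}w^m$, and then extending the resulting rational-function identity to all real $r$ (so that no restriction $r<1$ is imposed), one finds
\[
 r^{-N}\Big((r-e^{i\theta})^{-j}-T^{N-1}_r\big[(r-e^{i\theta})^{-j}\big]\Big)=(-1)^j e^{-i(N+j)\theta}\,\phi_j(r,\theta),\qquad
 \phi_j(r,\theta):=\sum_{l\ge0}\binom{l+N+j-1}{j-1}(re^{-i\theta})^l .
\]
The key point is that $\phi_j$ is elementary: $\binom{l+N+j-1}{j-1}$ is a polynomial in $l$ of degree $j-1$, so expanding it in the basis $\big\{\binom{l+m}{m}\big\}_{m=0}^{j-1}$ and using $\sum_l\binom{l+m}{m}w^l=(1-w)^{-(m+1)}$ gives, with $z:=re^{-i\theta}$,
\[
 \phi_j(r,\theta)=\sum_{s=1}^{j}\frac{d_s}{(1-z)^{s}},
\]
a rational function of $z$ whose only finite pole is at $z=1$, of order $\le j\le n$, with coefficients $d_s=d_s(N)$ that are signed sums of binomial coefficients satisfying $|d_s|\lesssim_{n,j}N^{\,j-s}$ (the top coefficient $d_j$ is in fact $1$, independent of $N$). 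Since $\overline{(r-e^{i\theta})^{-j}-T^{N-1}_r[\cdot]}=(r-e^{-i\theta})^{-j}-T^{N-1}_r[\cdot]$ for real $r$, summing the previous display against $A_j,\overline{A_j}$ yields
\[
 r^{-N}\big(g-T^{N-1}[g]\big)=2\,\RE\Big[e^{-iN\theta}\sum_{j=1}^n(-1)^jA_j(\theta)e^{-ij\theta}\phi_j(r,\theta)\Big]=\RE\big[a(r,\theta)e^{iN\theta}\big],
\]
with $a(r,\theta):=2\sum_{j=1}^n(-1)^j\overline{A_j(\theta)}\,e^{ij\theta}\,\overline{\phi_j(r,\theta)}$, which is \re{g-Tg_a_form}.

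It remains to establish \re{a_der_est} for this $a$. By the Leibniz rule $\partial_r^i\partial_\theta^k a$ is a finite sum of products $(\partial_\theta^{k_1}\overline{A_j})(\partial_\theta^{k_2}e^{ij\theta})(\partial_r^i\partial_\theta^{k_3}\overline{\phi_j})$ with $k_1+k_2+k_3=k$; the first factor is $\lesssim|\sin\theta|^{-(2n-j)-k_1}$ and the second is $\lesssim_n1$. For the third, since $\partial_r z=e^{-i\theta}$ and $\partial_\theta z=-iz$, every application of $\partial_r$ or $\partial_\theta$ to a summand $(1-z)^{-s}$ of $\phi_j$ raises the pole order by one and introduces only bounded extra factors; hence $\partial_r^i\partial_\theta^{k_3}\phi_j$ is a sum of terms of size $\lesssim N^{\,j-s}\,|z|^{t}\,|1-z|^{-(s+i+t)}$ with $1\le s\le j$ and $0\le t\le k_3$. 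Now \rl{Lem::triassic}, applied with $a=r$, gives $|1-z|=|re^{-i\theta}-1|\gtrsim|1-r|+|\sin\theta|=:\delta$, and since $\delta\ge|\sin\theta|$ one extracts a factor $\delta^{-1-i}$ from $|1-z|^{-(s+i+t)}$ and bounds each remaining factor $|1-z|^{-1}$ by $|\sin\theta|^{-1}$. Collecting exponents, a typical term (in the range $r\lesssim1$) is $\lesssim N^{\,j-s}|\sin\theta|^{-E}\delta^{-1-i}$ with $E=(2n-j)+k_1+(s+t-1)$, and $t\le k_3$, $k_1+k_3\le k$, $s\le j$ give $E\le 2n-1+k-(j-s)$; converting the excess $|\sin\theta|^{-(E-(n+k))}$ into at most $N^{E-(n+k)}$ via the hypothesis $|\sin\theta|^{-1}\lesssim N$ leaves a power $N^{\,j-s+E-(n+k)}\le N^{\,n-1}$ in front of $|\sin\theta|^{-(n+k)}\delta^{-1-i}$, which is exactly \re{a_der_est}; the cases $r\gtrsim1$ are easier, since then $|1-z|\gtrsim r$ makes all such terms decay in $r$. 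The step I expect to be the main obstacle is precisely this final bookkeeping: one must track the three competing scales $N$, $|\sin\theta|^{-1}$, and $\delta^{-1}$ simultaneously through every differentiation, and must check that the cancellations forced by the regularity of $\phi_j$ at $z=0$ are compatible with the crude term-by-term estimates — which is why it is essential to work with the closed rational form of $\phi_j$ rather than its series, the latter converging only for $r<1$.
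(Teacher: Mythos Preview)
Your argument is correct and reaches the same structural formula for $a$ as the paper, but by a genuinely different route. The paper complexifies $g$ in the variable $r$, writes $w^{-N}R_N(w)$ as a Cauchy integral, and pushes the contour to infinity, picking up the residues at $e^{\pm i\theta}$; computing those residues by $\tfrac{d}{2}-1$ differentiations yields $a$ directly as a finite sum of terms $a_{lpm}(e^{-i\theta}-r)^{-1-l}(\sin\theta)^{-d/2-p}$ with $|a_{lpm}|\approx N^{m}$ and $l+p+m=\tfrac{d}{2}-1$, after which \re{a_der_est} follows in two lines. Your partial-fraction route is the elementary counterpart: the coefficients $A_j=c_{n,j}(2i\sin\theta)^{-(2n-j)}$ are exactly the residue data, and your closed form $\phi_j=\sum_{s}d_s(1-z)^{-s}$ with $d_s=\binom{N+j-1-s}{j-s}$ (so $|d_s|\lesssim N^{j-s}$) recovers precisely the same triple of exponents --- setting $l=s-1$, $p=n-j$, $m=j-s$ gives $l+p+m=n-1$. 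What you gain is that no contour integration is invoked and the extension from $r<1$ to all $r$ is handled by the identity principle for rational functions rather than analytic continuation; what you pay is the longer bookkeeping at the end. One small point: in your final exponent count you implicitly assume $E\ge n+k$ when you ``convert the excess''; when $E<n+k$ the bound is actually immediate since then $|\sin\theta|^{-E}\le|\sin\theta|^{-(n+k)}$ and $N^{j-s}\le N^{n-1}$ directly, so no conversion is needed --- you should say this explicitly.
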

\begin{proof}
Fix $\thh$, we complexify $g$ in the $r$ variable. 
The function $g^{\thh} (w):= [(w-e^{i\thh}) (w- e^{-i \thh})]^{-\frac{d}{2}}$ is analytic in $\C$ 
except for poles at $e^{\pm i \thh}$. 
For $w \in \C$, we denote $R_N (w):= g^\thh(w) - T_{N-1}(g)(w)$. 
By the Cauchy integral formula and the residue theorem, we have, for small $|w|$ and $N \geq 2$,  
\begin{align*}
 w^{-N} R_N(w)   
&= \frac{1}{2 \pi i} \int_{|\zeta| = 1/2} \frac{R_{N}(\zeta)}{\zeta^{N} (\zeta -w)} \, d \zeta 
 \\ &= \frac{1}{2 \pi i} \int_{|\zeta| = 1/2} \frac{g(\zeta)}{\zeta^{N} (\zeta -w)} \, d \zeta
 \\ &= - \left(  \underset{\zeta = e^{i\thh}}{\Res} + \underset{\zeta = e^{-i\thh}}{\Res} \right) \left[  \zeta^{-N} g(\zeta) (\zeta -w)^{-1} \right]
 \\ &\quad + \underset{L \to \infty}{\lim} \frac{1}{2 \pi i} \int_{|\zeta|=L}  \zeta^{-N} g(\zeta) (\zeta -w)^{-1} \, d \zeta. 
\end{align*}
The limit is $0$ for the last term, so 
\[
  w^{-N} R_N(w) = - \left(  \underset{\zeta = e^{i\thh}}{\Res} + \underset{\zeta = e^{-i\thh}}{\Res} \right) \left[  \zeta^{-N} (\zeta -w)^{-1} g(\zeta) \right].
\]
By analytic continuation the above holds for all $w$. 

Write
\[
 \zeta^{-N} (\zeta -w)^{-1} g(\zeta) 
 = \zeta^{-N} (\zeta -w)^{-1} \left[(\zeta-e^{i \thh})(\zeta-e^{-i \thh})\right] ^{-\frac{d}{2}}. 
\]  
The residues can be computed by taking $\eta = \frac{d}{2} -1$ derivatives: 
\begin{gather*}
  \underset{\zeta = e^{i\thh}}{\Res}  
 = \left. \frac{d^\eta}{d \zeta^\eta} \right|_{\zeta = e^{i \thh}} \zeta^{-N} (\zeta -w)^{-1} \left[(\zeta -e^{-i \thh})\right] ^{-\frac{d}{2}}  \\ 
  \underset{\zeta = e^{-i\thh}}{\Res}  
 = \left. \frac{d^\eta}{d \zeta^\eta} \right|_{\zeta = e^{-i \thh}} \zeta^{-N} (\zeta -w)^{-1} \left[(\zeta -e^{i \thh})\right]^{-\frac{d}{2}}. 
\end{gather*}
Hence we can write
\[
  r^{-N} R_N (r) = 
\RE \left( e^{i N \thh} a(r,\thh) \right) 
\]
where $a(r,\thh)$ is a finite sum of terms of the form 
\[
  a_{lpm} (e^{-i \thh} - r)^{-1-l} ( \sin \thh)^{-\frac{d}{2} - p},   
\]
$a_{lpm}$ being constants with $|a_{lpm}| \approx N^{m}$ and $l+p+m = d/2 -1$. Since $|\sin \theta| \gtrsim N^{-1}$ by assumption, and also $|r-e^{-i\thh}| \gtrsim |\sin \thh|$ 
(\rl{Lem::triassic}), the worst term is the term with $m = \frac{d}{2} -1$. Thus
\begin{equation} \label{a_z-zeta_bd}  
\begin{aligned}
 |a(r,\thh)| &\leq C_d N^{\frac{d}{2}-1} |\sin \thh|^{-\frac{d}{2}} |e^{-i\thh} - r|^{-1} 
 \\ &\leq C_d N^{\frac{d}{2}-1} |\sin \thh|^{-\frac{d}{2}} (|\sin \thh| + |1-r|)^{-1}.
\end{aligned}  
\end{equation} 
Similarly we can estimate $\frac{\pa^i}{\pa r^i} \frac{\pa^k}{\pa \thh^k} a(r,\thh) $. Each $r$ derivative produces a factor of $(e^{-i\thh} - r)^{-1}$, while each $\thh$ derivative produces a factor at worst a factor of $|\sin \thh|^{-1}$. Therefore 
\begin{align*}
\left| \frac{\pa^i}{\pa r^i} \frac{\pa^k}{\pa \thh^k} a(r,\thh) \right| 
&\leq C_{d,i,k} N^{\frac{d}{2}-1} |\sin \thh|^{-\frac{d}{2} -k} |e^{-i\thh} -r|^{-1-i} 
\\ &\leq C_{d,i,k} N^{\frac{d}{2}-1} |\sin \thh|^{-\frac{d}{2} -k} (|\sin \thh| + |1-r|) ^{-1-i}. \qedhere
\end{align*}
\end{proof}
The following result follows immediately from \rp{Prop::IN_hg_form} and \rp{Prop::g-Tg}. 
\begin{cor}
  Let $I_j^N, h_{1,j}, h_{2,j}$ be defined as above. For $z,\zeta \in \R^d \sm \{ 0 \}$ and $\thh(z,\zeta) \gtrsim N^{-1}$, we can write
\begin{equation} \label{IN_polar_form}  
   I_j^N (z,\zeta) = h_{1,j}(z) \RE \left[ a_1(r,\thh) e^{iN\thh} \right] + h_{2,j} (z,\zeta) \RE \left[ a_2(r,\thh) e^{i(N-1)\thh} \right].    
\end{equation}
Furthermore, let $b(z,\zeta)$ denote one of the following: $h_{1,j}(z) a_1 (r,\thh)$, $h_{1,j}(z) \ov{a_1 (r,\thh)}$, $h_{2,j} (z,\zeta) a_2 (r,\thh)$ and $h_{2,j} (z,\zeta) \ov{a_2 (r,\thh)}$. Choose a coordinate system on the unit sphere $S^{d-1}$. Let $\xi$ and $\eta$ be variables on $S^{d-1}$, $D_\xi^\si$, $D_\eta^\tau$ denote differentiation in the given coordinate system. Then for $| \sin \thh | > \frac{1}{2N}$, 
\begin{equation} \label{bi_sph_est}  
  \left| D_\xi^\all D_\eta^\beta b_i(z,\zeta) 
\right| \leq C_{d,\all,\beta} N^{\frac{d}{2}-1} |\sin \thh|^{-\frac{d}{2} -|\all|-|\beta|} (|\sin \thh| + |1-r|)^{-1}, \quad i =1,2.   
\end{equation}
\end{cor}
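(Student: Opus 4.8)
\emph{Proof idea.} The plan is to prove both assertions by feeding the expansion of \rp{Prop::g-Tg} into that of \rp{Prop::IN_hg_form} and then differentiating the resulting product termwise. For the representation \eqref{IN_polar_form}: \rp{Prop::IN_hg_form} gives $I_j^N = c_d\, h_{j1}\, r^{-N}(g - T_r^{N-1}[g]) + c_d\, h_{j2}\, r^{-(N-1)}(g - T_r^{N-2}[g])$. The hypothesis forces $|\sin\thh|\gtrsim N^{-1}$, so I would apply \rp{Prop::g-Tg} to the first bracket with frequency $N$ and to the second bracket with frequency $N-1$; writing $a_1$ (frequency $N$) and $a_2$ (frequency $N-1$) for the two amplitudes and absorbing $c_d$ into them produces exactly \eqref{IN_polar_form}. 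Since $N-1\approx N$, the estimate \eqref{a_der_est} holds for both $a_1$ and $a_2$ with $N$ in place of $N-1$, a fact I will use freely below.

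For the derivative bound \eqref{bi_sph_est}, fix $b = h\cdot a$ with $h\in\{h_{j1},h_{j2}\}$ and $a\in\{a_1,\ov{a_1},a_2,\ov{a_2}\}$; note that $b_i$ does \emph{not} carry the exponential $e^{iN\thh}$, which is what prevents the appearance of positive powers of $N$ after differentiation. Under the spherical derivatives $D_\xi$, $D_\eta$ the radial ratio $r = |z|/|\zeta|$ is frozen, so these operators see $b$ only through the directions $\xi = z/|z|$, $\eta = \zeta/|\zeta|$, and through them the angle $\thh=\thh(\xi,\eta)$. I would first record that $h_{j1}$ and $h_{j2}$, being homogeneous of degree $0$ and smooth away from $\{z=0\}\cup\{\zeta=0\}$ (their only singularities; in particular they do not degenerate as $\sin\thh\to 0$, since the quantity $\tfrac{|z|^2+|\zeta|^2-|z-\zeta|^2}{2|z||\zeta|}$ appearing in $h_{j1}$ is just $\cos\thh$), have all angular derivatives bounded: $|D_\xi^{\all'}D_\eta^{\beta'} h| \le C_{d,\all',\beta'}$.

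Next I would estimate $D_\xi^{\mu}D_\eta^{\nu} a$ via the chain rule. Since $a=a(r,\thh)$ with $r$ frozen, Fa\`a di Bruno expresses this as a finite sum of terms $(\partial_\thh^k a)\prod_{s=1}^{k} D_\xi^{\mu_s}D_\eta^{\nu_s}\thh$, with $\sum_s\mu_s = \mu$, $\sum_s\nu_s=\nu$ and $|\mu_s|+|\nu_s|\ge 1$. The angle $\thh$ is the geodesic distance between $\xi$ and $\eta$ on $S^{d-1}$, and an elementary computation (working in geodesic normal coordinates based at $\eta$, and near the cut locus at $-\eta$) yields $|D_\xi^{\mu_s}D_\eta^{\nu_s}\thh|\le C_{d,\mu_s,\nu_s}\,|\sin\thh|^{-(|\mu_s|+|\nu_s|-1)}$, i.e. each derivative beyond the first costs one power of $|\sin\thh|^{-1}$. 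Combining this with \eqref{a_der_est} at $i=0$, namely $|\partial_\thh^k a|\le C_d N^{\frac{d}{2}-1}|\sin\thh|^{-\frac{d}{2} - k}(|\sin\thh|+|1-r|)^{-1}$, each such term is bounded by $C_d N^{\frac{d}{2}-1}(|\sin\thh|+|1-r|)^{-1}$ times $|\sin\thh|^{-\frac{d}{2}-k}\cdot|\sin\thh|^{-\sum_s(|\mu_s|+|\nu_s|-1)}$, and since $k+\sum_s(|\mu_s|+|\nu_s|-1)=|\mu|+|\nu|$ this equals $C_d N^{\frac{d}{2}-1}|\sin\thh|^{-\frac{d}{2}-|\mu|-|\nu|}(|\sin\thh|+|1-r|)^{-1}$. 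Multiplying by the bounded factor $D_\xi^{\all'}D_\eta^{\beta'}h$ and summing the Leibniz expansion over $\all'\le\all$, $\beta'\le\beta$ gives \eqref{bi_sph_est}.

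The substitution in the first step is automatic; the only genuine content is the angular bookkeeping, and within it the one thing requiring an honest (if short) argument is the estimate $|D^{(\mu,\nu)}_{\xi,\eta}\thh|\lesssim|\sin\thh|^{-(|\mu|+|\nu|-1)}$ for the angle function, together with the observation that since no $r$-derivatives are taken the factor $(|\sin\thh|+|1-r|)^{-1}$ keeps its exponent $-1$ throughout. I expect this to be the only place needing care.
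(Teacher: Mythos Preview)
Your argument is correct and follows essentially the same approach as the paper: the representation \eqref{IN_polar_form} is obtained by substituting \rp{Prop::g-Tg} into \rp{Prop::IN_hg_form}, and the derivative bound comes from the Leibniz rule together with the fact that $h_{j1},h_{j2}$ are degree-zero homogeneous with bounded spherical derivatives, combined with \eqref{a_der_est}. The paper's proof is terser---it simply writes $h_{j1},h_{j2}$ in polar coordinates and invokes \eqref{a_der_est} without spelling out the chain rule passage from $\partial_\thh^k a$ to $D_\xi^\all D_\eta^\beta a$; your Fa\`a di Bruno step with the bound $|D_\xi^{\mu_s}D_\eta^{\nu_s}\thh|\lesssim |\sin\thh|^{-(|\mu_s|+|\nu_s|-1)}$ makes this explicit and is a welcome addition.
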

\begin{proof}
  The form of $I_j^N$ \re{IN_polar_form} is immediate from \re{IN_hg_form} and \re{g-Tg_a_form}. To show \re{bi_sph_est}, recall that 
  \[
    h_{1,j}(z,\zeta) = - \frac{1}{|\zeta|} \left( \ov \zeta_j - \yh \frac{ \ov z_j}{|z|^2} \left( z\cdot \ov \zeta + \ov z \cdot \zeta \right) \right) - \frac{z \cdot \ov \zeta + \zeta \cdot \ov z}{2|z| |\zeta|} \cdot \frac{\ov z_j}{2 |z|}; \quad 
  h_{2,j}(z) = \frac{\ov z_j}{2 |z|}. 
  \]
  Writing in polar coordinates: $z= s w(\xi) = s(w_1(\xi), \dots, w_n(\xi))$, $\zeta = t w(\eta) = t(w_1(\xi), \dots, w_n(\xi))$, where $s =|z|, 
t=|\zeta|$, we have
\begin{align*}
  h_{1,j}(s \xi, t\zeta) 
  &= - \left( \ov{w_j(\eta)} - \yh \ov {w_j(\xi)} \left( w(\xi) \cdot \ov{w(\eta)} + w(\xi) \cdot \ov{w(\eta)}   \right) \right) - \yh \left[ w(\xi) \cdot \ov{w(\zeta)} \right] \ov{w_j(\xi)}  
  \\ &\quad - \yh \left[ \ov{w(\xi)} \cdot w(\zeta) \right] \ov{w_j(\xi)} 
\end{align*}
and $h_{2,j}(s \xi, t\zeta) = \yh \ov{w_j(\xi)}$.  
Then the estimate follows by \re{a_der_est} and the fact that 
 \[
  |D_\xi^\all  w(\xi)  | \leq 1,  \quad 
|D_\eta^\beta w(\eta)| \leq 1, \quad  \forall \: \all, \beta. \qedhere
 \]
\end{proof}

\begin{prop} \label{Prop::I_N-1_est} 
  Let $u \in C^\infty_c (\C^n \sm \{ 0 \})$. We have
\[
  |z|^{-N} u(z)=  \int_\Om I^N(z,\zeta) |\zeta|^{-(N+d-1)} \we \db u(\zeta).
\]
Furthermore, 
\\ \\ 
\medskip
(i) 
  If $|\zeta-z| < \frac{|\zeta|}{2N}$, then 
\[
  |I^N(z,\zeta) | \leq C_d |\zeta|^{d-1} |\zeta-z|^{-(d-1)}. 
\]
(ii) 
If $|\zeta-z| \geq \frac{|\zeta|}{2N}$, then 
\[
   |I^N (z,\zeta) | \leq C_d   
N^{d-2} \min \{N, |1-r|^{-1} \}.   
\]

\end{prop}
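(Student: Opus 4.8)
The plan is to derive \rp{Prop::I_N-1_est} as a direct consequence of \rp{Prop::IN_hg_form}, \rp{Prop::IN-1_estimate_1}, and the coordinate-free bounds recorded in the corollary to \rp{Prop::g-Tg}. The integral representation itself is exactly the statement of \rp{Prop::IN_hg_form}, so nothing new is needed there; the content is the pointwise kernel estimates (i) and (ii).

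For part (i), when $|\zeta - z| < \frac{|\zeta|}{2N}$, I would start from the decomposition
\[
  I_j^N(z,\zeta) = \left[ I_j^N(z,\zeta) - |\zeta|^{d-1}\left( \tfrac{|\zeta|}{|z|}\right)^N B_j(z,\zeta) \right] + |\zeta|^{d-1}\left( \tfrac{|\zeta|}{|z|}\right)^N B_j(z,\zeta).
\]
The first bracket is bounded by $C_d N^{d-1}$ using \rp{Prop::IN-1_estimate_1}(i) with $k=0$. For the second term, note that on the region $|\zeta - z| < \frac{|\zeta|}{2N}$ one has $|z| \approx |\zeta|$, so $\left(\frac{|\zeta|}{|z|}\right)^N \leq C_d$ (since $\left(1 - \frac{1}{2N}\right)^{-N}$ and $\left(1+\frac{1}{2N}\right)^{-N}$ are bounded uniformly in $N$), and $|B_j(z,\zeta)| = |\zeta - z|^{-(d-1)}$, giving a contribution $\leq C_d |\zeta|^{d-1}|\zeta-z|^{-(d-1)}$. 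Finally I must check that the $N^{d-1}$ term from the first bracket is also dominated by $|\zeta|^{d-1}|\zeta-z|^{-(d-1)}$: since $|\zeta - z| < \frac{|\zeta|}{2N}$ we have $|\zeta-z|^{-(d-1)} > (2N)^{d-1}|\zeta|^{-(d-1)}$, so $N^{d-1} \leq C_d |\zeta|^{d-1}|\zeta-z|^{-(d-1)}$. Summing over $j$ and folding in the constant $c_d$ from $I^N = c_d\sum_j I_j^N \widehat{d\ov\zeta_j}$ yields (i).

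For part (ii), when $|\zeta - z| \geq \frac{|\zeta|}{2N}$, the bound $|D_z^k I_j^N(z,\zeta)| \leq C_{d,k} N^{d-2+k}\min\{N, |1-r|^{-1}\}|z|^{-k}$ is precisely \rp{Prop::IN-1_estimate_1}(ii) specialized to $k=0$: $|I_j^N(z,\zeta)| \leq C_d N^{d-2}\min\{N, |1-r|^{-1}\}$. Summing in $j$ and incorporating $c_d$ gives (ii) directly. I expect the only mildly delicate point to be part (i): one must be careful that the elementary inequality $\left(\frac{|\zeta|}{|z|}\right)^N \leq C_d$ genuinely holds on the stated region — this requires $\big||z| - |\zeta|\big| \leq |z - \zeta| < \frac{|\zeta|}{2N}$, hence $\frac{|z|}{|\zeta|} \in \left(1 - \frac{1}{2N}, 1 + \frac{1}{2N}\right)$, and then $\left(\frac{|\zeta|}{|z|}\right)^N < \left(1 - \frac{1}{2N}\right)^{-N} \leq C$ uniformly in $N \geq 1$ — and that the triangle-inequality juggling comparing $N^{d-1}$ with $|\zeta|^{d-1}|\zeta-z|^{-(d-1)}$ is done on the right region. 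Everything else is a matter of bookkeeping the dimension-dependent constants. No genuine obstacle is anticipated, since all the hard analytic work was already carried out in \rp{Prop::IN-1_estimate_1}.
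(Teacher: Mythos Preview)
Your proposal is correct and follows essentially the same route as the paper: the same triangle-inequality decomposition for (i), the same use of \rp{Prop::IN-1_estimate_1}(i) with $k=0$ to bound the Taylor-remainder piece, the same observation that $(|\zeta|/|z|)^N$ is uniformly bounded on the region $|\zeta - z| < |\zeta|/(2N)$, and the same direct appeal to \rp{Prop::IN-1_estimate_1}(ii) for part (ii). The only cosmetic difference is that the paper phrases the bound on $(|\zeta|/|z|)^N$ via a rescaling to $|\zeta|=1$, whereas you argue it directly; the content is identical.
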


\begin{proof}
(i) We have $I^N(z,\zeta) = c_d \sum_{j=1}^n I_j^N(z,\zeta) \hht{d\ov \zeta_j} $. For each $I_j^N$ we have 
\begin{equation} \label{I_N-1_est_tr_ineq}   
   |I_j^N (\zeta,z)| 
\leq \left| I_j^N(z,\zeta) - |\zeta|^{d-1} \left( \frac{|\zeta|}{|z|} \right)^N B_j (z,\zeta) \right| + |\zeta|^{d-1} \left( \frac{|\zeta|}{|z|} \right)^N \left| B_j (z,\zeta) \right|.    
\end{equation}
By \rp{Prop::IN-1_estimate_1} (i), the first term on the right is bounded by $C_d N^{d-1}$. The assumption $N \lesssim \frac{|\zeta|}{|\zeta-z|}$ implies that
\[
  N^{d-1} |\zeta| \lesssim |\zeta|^{d-1} |\zeta-z|^{-(d-1)}. 
\]
To estimate the last term in \re{I_N-1_est_tr_ineq}, notice that 
\[
  |\zeta-z| < \frac{|\zeta|}{2N} \iff 
  |\wti \zeta - \wti z| < \frac{1}{2N}, \quad \wti \zeta = \la \zeta, \; \wti z = \la z, \; \la = |\zeta|^{-1}.  
\]
Since $|\wti \zeta| =1 $, the above condition implies that 
$|\wti z|^{-1} \leq \frac{1}{1-(2N)^{-1}} = \frac{2N}{2N-1} $. Thus
\[
  |\wti z|^{-N} \leq \frac{(2N)^N}{(2N-1)^N} = \left( \frac{2N-1}{2N} \right)^N = \left[ \left(1- \frac{1}{2N} \right)^N \right]^{-1} \leq C.   
\] 
\begin{align*}
  \left( \frac{|\zeta|}{|z|} \right)^N |\zeta|^{d-1}  \left| B_j (z,\zeta) \right| 
&=  \left( \frac{|\wti \zeta|}{|\wti z|} \right)^N |\zeta|^{d-1}  \left| B_j (z,\zeta) \right| 
\\ &\lesssim  \la^{-(d-1)} \la^{d-1} \left| B_j (\wti z, \wti \zeta) \right| 
\\ &\lesssim |\wti \zeta - \wti z|^{-(d-1)} 
\\ &= \la^{-(d-1)} |\zeta-z|^{-(d-1)} 
= |\zeta|^{d-1} |\zeta-z|^{-(d-1)}. 
\end{align*}
\\ \medskip 
\noindent 
(ii) 
The estimate follows immediately from \rp{Prop::IN-1_estimate_1} (ii). 
\end{proof}

\section{Oscillatory integral on the sphere}
This section largely follows Wolff \cite[Section 2]{Tom90}, and we shall adopt most of his notations.
\\ 
\nid
\textbf{Notations.}  
If $K$ is a function on $\R^d \times \R^d$, $s,t>0$, then $K^{st}$ is the function on $S^{d-1} \times S^{d-1}$ defined by $K^{st}(e,f) = K(se,tf)$. 

We fix a coordinate system on $S^{d-1}$. For $\la \in (0,1)$, denote by $\chi_\la$ any function from $S^{d-1} \times S^{d-1}$ to $[0,1]$ satisfying the following: $\chi_\la(x,y) = 0$ if $|\sin (\thh) (\xi,\eta)| < \frac{\la}{100}$ or $|\sin (\thh) (\xi,\eta)| > 100 \la$, and $|D^\all \chi_\la| \leq 
C_{|\all|} \la^{-|\all|}$. We denote by $\chi^\la$ any function from $S^{d-1} \times S^{d-1}$ to $[0,1]$ satisfying $\chi^\la(x,y)= 0$ if $|\sin \thh (x,y)| > 100 \la$, and $|D^\all \chi^\la| \leq C_{|\all|} \la^{-|\all|} $. That is, $\chi_\la$ is a smooth cutoff to $\{ \sin \thh = \la \}$ and $\chi^\la$ is a smooth cutoff to $|\sin \thh| \lesssim \la$.  

We shall write $A \lesssim B$ if $A \leq C B$ where $C$ is a constant which can depend on $d$ and $p$ but is independent of $N$, $\nu$ and $\la$. We write $A \approx B$ if both $A \lesssim B$ and $B \lesssim A$. 

We will identify kernel with the operator it induces, and denote the norm of an operator acting from $L^p(X,\mu) \to L^q(Y,\nu)$ by $\|T \|_{L^p(X,\mu) \to L^q(Y,\nu)}$, or $\| T \|_{p\to q}$ if there is no confusion.   

\begin{lemma} \label{Lem::op_norm_prod_space} 
 Suppose $1 < p < q <\infty$, $(X,\mu), (Y, \nu), (Z, \si), (W,\tau)$ are measure spaces, $\{ T_{wy} \}_{y \in Y, w \in W}$ is a measurable family of operators from $L^p (X,\mu)$ to $L^q (Z, \nu)$ and the kernel 
\[
  n(w,y) = \| T_{wy} \|_{L^p(X,\mu) \to L^q (Z,\si)} 
\]
defines a bounded operator $f \to \int n(w,y) f(y) \, d \nu(y)$ from $L^p (Y,\nu)$ to $L^q (W,\tau )$ with norm $N$. For $f: X \times Y \to \C$ define $f_y(x) = f(x,y)$. Then $T$ defined by 
\[
  (Tf)_w = \int T_{wy} (f_y) \, d \nu(y) 
\]
is a bounded operator from $L^p(X \times Y, \mu  \times \nu ) \to L^q (Z \times W, \si \times \tau)$ with norm $\leq N$.   
\end{lemma}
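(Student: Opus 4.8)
The plan is to peel off the two "outer" integrations one at a time: use Minkowski's integral inequality to move the $\nu$‑integral outside the $L^q(Z,\sigma)$‑norm, then use the hypothesis on the scalar kernel $n$, and finally use Fubini--Tonelli to pass between the mixed norms on the product spaces and the iterated norms. Throughout I take for granted — as is implicit in the hypothesis that $\{T_{wy}\}$ is a \emph{measurable} family — that the relevant functions ($y\mapsto T_{wy}(f_y)(z)$, $w\mapsto\|(Tf)_w\|_{L^q(Z,\sigma)}$, and so on) are jointly measurable in the appropriate variables, so that each invocation of Fubini is legitimate; in the application at hand all the kernels involved are continuous, so this is automatic.

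\textbf{Slicing.} Fix $f\in L^p(X\times Y,\mu\times\nu)$ and set $g(y):=\|f_y\|_{L^p(X,\mu)}$, where $f_y(x)=f(x,y)$. Since $p<\infty$, Tonelli's theorem gives $f_y\in L^p(X,\mu)$ for $\nu$‑a.e.\ $y$ and
\[
  \|g\|_{L^p(Y,\nu)}=\|f\|_{L^p(X\times Y,\mu\times\nu)}.
\]
Likewise, for the target, Fubini will give $\|Tf\|_{L^q(Z\times W,\sigma\times\tau)}^q=\int_W\|(Tf)_w\|_{L^q(Z,\sigma)}^q\,d\tau(w)$ once we know $Tf\in L^q$; so it suffices to bound $w\mapsto\|(Tf)_w\|_{L^q(Z,\sigma)}$ in $L^q(W,\tau)$.

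\textbf{Minkowski plus the kernel bound.} For fixed $w$, $(Tf)_w=\int_Y T_{wy}(f_y)\,d\nu(y)$, and since $q\ge 1$, Minkowski's integral inequality in $L^q(Z,\sigma)$ together with the operator‑norm estimate $\|T_{wy}(f_y)\|_{L^q(Z,\sigma)}\le \|T_{wy}\|_{L^p(X,\mu)\to L^q(Z,\sigma)}\|f_y\|_{L^p(X,\mu)}=n(w,y)g(y)$ yields
\[
  \|(Tf)_w\|_{L^q(Z,\sigma)}\le \int_Y \|T_{wy}(f_y)\|_{L^q(Z,\sigma)}\,d\nu(y)\le \int_Y n(w,y)\,g(y)\,d\nu(y)=(Sg)(w),
\]
where $S$ is the operator with kernel $n$. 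Since $g\ge 0$ and $0\le \|(Tf)_w\|_{L^q(Z,\sigma)}\le (Sg)(w)$ pointwise in $w$, monotonicity of the $L^q(W,\tau)$‑norm and the hypothesis $\|S\|_{L^p(Y,\nu)\to L^q(W,\tau)}=N$ give
\[
  \Big(\int_W \|(Tf)_w\|_{L^q(Z,\sigma)}^q\,d\tau(w)\Big)^{1/q}\le \|Sg\|_{L^q(W,\tau)}\le N\|g\|_{L^p(Y,\nu)}=N\|f\|_{L^p(X\times Y,\mu\times\nu)}.
\]
The finiteness of the right‑hand side justifies a posteriori that $Tf\in L^q(Z\times W,\sigma\times\tau)$ and the use of Fubini above, so combining with the slicing step we get $\|Tf\|_{L^q(Z\times W)}\le N\|f\|_{L^p(X\times Y)}$, which is the claim.

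There is no real obstacle here: the proof is just a two‑fold application of Minkowski's integral inequality and Fubini--Tonelli. The only point that needs attention is the measurability bookkeeping — that the $L^q(Z,\sigma)$‑valued integral defining $(Tf)_w$ makes sense and that taking norms produces functions measurable in the outer variables $w$ (resp.\ $y$) — and this is covered by the standing assumption that $\{T_{wy}\}$ is a measurable family.
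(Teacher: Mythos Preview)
Your proof is correct and is precisely the standard argument: Minkowski's integral inequality in $L^q(Z,\sigma)$ to pull the $\nu$-integral outside, the operator-norm definition of $n(w,y)$, the kernel hypothesis on $n$, and Fubini--Tonelli to identify the iterated norms with the product norms. The paper itself does not give a proof but simply cites \cite[Lemma~2.1]{Tom90}, so you have filled in exactly the details that the citation points to.
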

\begin{proof}
  See \cite[Lemma 2.1]{Tom90}. 
\end{proof} 
\begin{lemma} \label{Lem::op_norm_AB} 
   Suppose $1 \leq p \leq 2$, $\displaystyle \frac{1}{p}- \frac{1}{p'} = \frac{1}{q}$, $(X,\mu)$ and $(Y,\nu)$ are measure spaces, $K:X \times Y 
 \to \C$ and $u: X \to \R^+$, $v: Y \to \R^+$. Define 
\begin{gather*}
A = \sup_x \|(u(x) v(y))^{-\frac{1}{p'}} K(x,y) \|_{L^{q'}(Y,v(y)\nu)} ; 
   \\ 
 B = \sup_y \|(u(x) v(y))^{-\frac{1}{p'}} K(x,y) \|_{L^{q'}(Y,u(x) \mu)}. 
\end{gather*}
Then the norm of $f \to \int K(x,y) f(y) \, d \nu(y)$ as an operator from $L^p(Y,\nu)$ to $L^{p'} (X,\mu)$ is $\leq$ $(AB)^{1/2} $. 
\end{lemma}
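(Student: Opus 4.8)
The plan is to prove this as a weighted, $L^{q'}$-version of Schur's test, carried out via a three-fold Hölder inequality on the product space $X\times Y$. The structural fact driving everything is that the standing relations $\frac1p+\frac1{p'}=1$ and $\frac1p-\frac1{p'}=\frac1q$ force $\frac1{q'}=1-\frac1q=1-\bigl(\frac1p-\frac1{p'}\bigr)=\frac2{p'}$, i.e. $q'=p'/2$; equivalently $(p-1)p'=p$, $\tfrac pq=2-p$, and $(2-p)q=p$. I would first reduce to the unweighted case $u\equiv v\equiv1$: put $\widetilde K(x,y)=(u(x)v(y))^{-1/p'}K(x,y)$, $d\widetilde\mu=u\,d\mu$, $d\widetilde\nu=v\,d\nu$, and substitute $g=f\,v^{-1/p}$, so that $\|g\|_{L^p(\widetilde\nu)}=\|f\|_{L^p(\nu)}$. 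Using $\frac1p+\frac1{p'}=1$, all powers of $v$ cancel and $\int_Y K(x,y)f(y)\,d\nu(y)=u(x)^{1/p'}\int_Y\widetilde K(x,y)g(y)\,d\widetilde\nu(y)$, so the operator norm in question equals the norm of $g\mapsto\int\widetilde K(\cdot,y)g(y)\,d\widetilde\nu(y)$ from $L^p(\widetilde\nu)$ to $L^{p'}(\widetilde\mu)$; and since $q'/p'=\tfrac12$, the two hypotheses become $\sup_x\|\widetilde K(x,\cdot)\|_{L^{q'}(\widetilde\nu)}\le A$ and $\sup_y\|\widetilde K(\cdot,y)\|_{L^{q'}(\widetilde\mu)}\le B$.

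Thus it suffices to show: if $F\colon X\times Y\to\mathbb C$ has $\sup_x\|F(x,\cdot)\|_{L^{q'}(\nu)}\le A$ and $\sup_y\|F(\cdot,y)\|_{L^{q'}(\mu)}\le B$, then $\bigl\|g\mapsto\int_Y F(\cdot,y)g(y)\,d\nu(y)\bigr\|_{L^p(\nu)\to L^{p'}(\mu)}\le(AB)^{1/2}$. For $1<p\le2$ one has $(L^{p'}(\mu))^*=L^p(\mu)$, so by duality it is enough to bound $\int_X\int_Y|F(x,y)|\,g(y)\,h(x)\,d\nu(y)\,d\mu(x)$ by $(AB)^{1/2}\|g\|_{L^p(\nu)}\|h\|_{L^p(\mu)}$ for nonnegative $g\in L^p(\nu)$, $h\in L^p(\mu)$. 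The heart of the argument is the pointwise splitting
\[
 |F(x,y)|\,g(y)\,h(x)=\bigl(|F(x,y)|^{1/2}g(y)^{p-1}\bigr)\bigl(|F(x,y)|^{1/2}h(x)^{p-1}\bigr)\bigl(g(y)^{2-p}h(x)^{2-p}\bigr),
\]
in which the exponents of $F$, of $g$, and of $h$ each add to $1$, and where $1\le p\le2$ keeps $p-1,2-p\ge0$. I would then apply Hölder on $(X\times Y,\mu\otimes\nu)$ with the triple of exponents $(p',p',q)$, which is admissible because $\frac2{p'}+\frac1q=\frac1{q'}+\frac1q=1$.

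It remains to estimate the three resulting factors. For the first, $(p-1)p'=p$ and $p'/2=q'$ give, by Tonelli, $\bigl\|\,|F|^{1/2}g^{p-1}\bigr\|_{L^{p'}(\mu\otimes\nu)}^{p'}=\int_Y g(y)^p\bigl(\int_X|F(x,y)|^{q'}\,d\mu(x)\bigr)d\nu(y)\le B^{q'}\|g\|_{L^p(\nu)}^{p}$, so this factor is $\le B^{1/2}\|g\|_{L^p(\nu)}^{p-1}$; by symmetry the second is $\le A^{1/2}\|h\|_{L^p(\mu)}^{p-1}$; and since $(2-p)q=p$, the third equals $\bigl(\|g\|_{L^p(\nu)}^p\|h\|_{L^p(\mu)}^p\bigr)^{1/q}=\|g\|_{L^p(\nu)}^{2-p}\|h\|_{L^p(\mu)}^{2-p}$. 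Multiplying, the powers of $\|g\|$ and of $\|h\|$ recombine to $1$, which yields the claimed bound $(AB)^{1/2}\|g\|_{L^p(\nu)}\|h\|_{L^p(\mu)}$. The endpoint $p=1$ (where $q'=\infty$ and $A=B=\|F\|_{L^\infty(\mu\otimes\nu)}$) is immediate directly, and $p=2$ is the classical Schur test, recovered from the same splitting with the conventions $g^0=h^0\equiv1$. The only genuinely nonroutine point is spotting the three-term decomposition; once $q'=p'/2$ is observed, the exponent bookkeeping is essentially forced, and the secondary point requiring care is checking, in the reduction to unit weights, that $g=f\,v^{-1/p}$ makes every power of $u$ and $v$ cancel — which is exactly where $\frac1p+\frac1{p'}=1$ and $q'/p'=\tfrac12$ get used.
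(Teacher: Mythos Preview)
The paper does not give its own proof of this lemma; it simply cites \cite[Lemma~2.2]{Tom90}. Your argument is correct and is the standard proof of this weighted Schur-type bound: the reduction to unit weights via $\widetilde K=(uv)^{-1/p'}K$, $d\widetilde\mu=u\,d\mu$, $d\widetilde\nu=v\,d\nu$ is clean, and the three-fold H\"older splitting with exponents $(p',p',q)$---driven by the identity $q'=p'/2$ that follows from $\frac1p-\frac1{p'}=\frac1q$---is exactly the right mechanism, with all the exponent bookkeeping ($(p-1)p'=p$, $(2-p)q=p$, $q'/p'=\tfrac12$) checked correctly.
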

\begin{proof}
  See \cite[Lemma 2.2]{Tom90}. 
\end{proof} 

\begin{lemma} \label{Lem::oscill_est}   
  Suppose $\thh: \R^{d-1} \times \R^{d-1} \to \R $, $a \in C^\infty_c (\R^{d-1} \times \R^{d-1})$. Assume $|x-y| \approx \rho$ for all $x,y \in \supp a$, and on $\supp a$, we have $|D_x^\all D_y^\beta \thh| \lesssim C_{\all\beta} \rho^{1-|\all|-|\beta|} $, 
  and $\na_x \na_y \thh$ has at least $m$ eigenvalues with magnitude $\geq (C_1 \rho)^{-1}$. Assume moreover that 
 $\left| D_x^\all D_y^\beta a \right| \leq C_{\all \beta} \rho^{-|\all| -|\beta|}$. Then $a(x,y) e^{iN \thh(x,y)}$ is $L^p \to L^{p'} $ bounded with norm $\leq C \rho^{\frac{2(d-1)}{p'}} (N \rho)^{-\frac{m}{p'}} $, $1 \leq p \leq 2 $, where the constant $C$ depends on $d,C_1,C_{\all \beta}$. 
\end{lemma}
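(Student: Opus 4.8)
The plan is to normalize the scale to $\rho = 1$, establish the bound at the two endpoints $p=1$ and $p=2$, and interpolate. First I would change variables, setting $\tilde x = x/\rho$, $\tilde y = y/\rho$, and replacing $\thh$, $a$ by $\tilde\thh(\tilde x,\tilde y) = \rho^{-1}\thh(\rho\tilde x,\rho\tilde y)$, $\tilde a(\tilde x,\tilde y) = a(\rho\tilde x,\rho\tilde y)$, with $\mu := N\rho$. The hypotheses are tailored to be invariant under this scaling: one checks $|D_{\tilde x}^\alpha D_{\tilde y}^\beta\tilde\thh| \lesssim 1$, that $\nabla_{\tilde x}\nabla_{\tilde y}\tilde\thh$ has at least $m$ singular values $\gtrsim 1$, $|D_{\tilde x}^\alpha D_{\tilde y}^\beta\tilde a| \lesssim 1$, $|\tilde x - \tilde y| \approx 1$ on $\supp\tilde a$, and $\supp\tilde a$ has diameter $\lesssim 1$. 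A direct bookkeeping of $L^p$-norms under the dilation shows that if the operator $\tilde T_\mu$ with kernel $\tilde a\,e^{i\mu\tilde\thh}$ has norm $\mathcal N(\mu)$ from $L^p$ to $L^{p'}$, then the original operator has norm $\lesssim \rho^{(d-1)(1+1/p'-1/p)}\,\mathcal N(N\rho)$; since $1/p - 1/p' = 1/q$, the exponent equals $(d-1)(1 - 1/q) = 2(d-1)/p'$. Thus it suffices to prove $\mathcal N(\mu) \lesssim \mu^{-m/p'}$ for $1 \leq p \leq 2$.

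The endpoint $p = 1$ is immediate, $\|\tilde T_\mu\|_{L^1 \to L^\infty} \leq \|\tilde a\|_{\infty} \lesssim 1$. The endpoint $p = 2$ is the classical $L^2$ oscillatory-integral estimate: a phase in $C^\infty$ with uniformly bounded derivatives whose mixed Hessian has rank $\geq m$ on the compact support of the amplitude gives an operator bounded on $L^2$ with norm $\lesssim \mu^{-m/2}$. To pass from the constant-rank-$m$ situation here to the textbook statement (usually phrased for a nondegenerate $m\times m$ mixed Hessian), I would introduce a finite partition of unity subordinate to a cover of $\supp\tilde a$ by small boxes; on each box, after a fixed linear change of the $\tilde x$- and $\tilde y$-coordinates, some $m\times m$ block of $\nabla_{\tilde x}\nabla_{\tilde y}\tilde\thh$ is invertible with inverse of size $O(1)$. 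Freezing the remaining $d-1-m$ coordinates in each of $\tilde x,\tilde y$, the standard $m$-dimensional estimate gives an $L^2(d\tilde y') \to L^2(d\tilde x')$ bound $\lesssim \mu^{-m/2}$ uniformly in the frozen variables; Minkowski's integral inequality together with Cauchy--Schwarz over the compact ranges of the frozen variables (equivalently, the $p=q$ case of the reasoning behind Lemma~\ref{Lem::op_norm_prod_space}) then upgrades this to the full $L^2 \to L^2$ bound $\lesssim \mu^{-m/2}$.

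Interpolating by Riesz--Thorin between $\|\tilde T_\mu\|_{1\to\infty} \lesssim 1$ and $\|\tilde T_\mu\|_{2\to 2} \lesssim \mu^{-m/2}$ with parameter $t = 2/p'$ gives $\|\tilde T_\mu\|_{p \to p'} \lesssim \mu^{-mt/2} = \mu^{-m/p'}$ for $1 \leq p \leq 2$; undoing the rescaling then yields the claimed bound $C\rho^{2(d-1)/p'}(N\rho)^{-m/p'}$. The step I expect to be the main obstacle is the dimensional reduction inside the $L^2$ estimate: the $m$-dimensional nondegenerate subspace of $\nabla_{\tilde x}\nabla_{\tilde y}\tilde\thh$ moves from point to point, so one must localize, rotate, and freeze variables while keeping the number of pieces and all constants independent of $\rho$ (hence of $\mu$ and $N$). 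This is precisely where the scale-homogeneous derivative bounds on $\thh$ and $a$ and the condition $|x - y| \approx \rho$ are used.
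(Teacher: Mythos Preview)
The paper does not actually prove this lemma; its entire ``proof'' is the citation ``See \cite[Lemma 2.4]{Tom90}.'' Your argument---rescale to $\rho=1$, take the trivial $L^1\to L^\infty$ bound, invoke the H\"ormander-type $L^2$ oscillatory integral estimate with rank-$m$ mixed Hessian (via localization and freezing of the degenerate directions), then Riesz--Thorin interpolate---is exactly the standard route and is what lies behind Wolff's lemma. The scaling bookkeeping you give is correct: $1+1/p'-1/p=2/p'$ produces the factor $\rho^{2(d-1)/p'}$, and the hypotheses are designed to be scale-invariant in precisely the way you describe. Your identification of the only delicate point (making the $m$-dimensional nondegenerate block uniform via a finite partition of unity and linear changes of coordinates, with constants independent of $\mu$) is also accurate; this is handled by compactness of $\supp\tilde a$ and the uniform derivative bounds on $\tilde\thh$. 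In short, your proposal is correct and matches the standard proof that the paper is citing.
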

\begin{proof}
  See \cite[Lemma 2.4]{Tom90}.  
\end{proof} 

\begin{prop} \label{Prop::K^st_N-1_est} 
 Let $\psi$ be a smooth function on $\R^d \times \R^d$ such that 
 \[ 
\psi(z,\zeta) = 
\begin{cases}
 1 & \text{if $|z-\zeta| > \frac{1}{N} |\zeta|$}; 
 \\ 
 0 & \text{if $|z-\zeta| < \frac{1}{2N} |\zeta|$}, 
\end{cases}
\]   
and $\left| D^k \psi \right| \lesssim \left( \frac{|\zeta|}{N} \right)^{-k}$. Let $I^N$ be the kernel in \rp{Prop::IN_hg_form} and $K_N = \psi I^N$. Then for $1 \leq p \leq 2$, $\displaystyle \frac{1}{p} - \frac{1}{p'} = \frac{1}{q}$, the following hold: 
\begin{enumerate}[(i)] 
  \item 
Fix $\mu \geq 1$. If $ \frac{1}{200} N^{-\frac{1}{1+\mu}} \leq \la \leq 200 N^{-\frac{1}{1+\mu}}$, then $\chi^\la K_N^{s,t}$ maps $L^p(S^{d-1}) \to L^{p'} (S^{d-1})$ with norm bounded by  
\[
 C_{d,p} N^{\frac{d}{2 \mu q} - \frac{1}{\mu q}} \la^{-\frac{d}{2q} + d-1 - \mu (d-2)} \left( \left| 1- \frac{s}{t} \right| + \la^\mu  \right)^{-1}. 
\]
\item 
If $ \la \geq \frac{1}{200} N^{-1}$, then $\chi_\la K_N^{s,t}$ maps $L^p(S^{d-1}) \to L^{p'} (S^{d-1})$ with norm bounded by 
\[
  C_{d,p} N^{\frac{d}{2q} - \frac{1}{q}} \la^{-\frac{d}{2q} } \left( \left| 1- \frac{s}{t} \right| + \la \right)^{-1}. 
\]
\end{enumerate}
\end{prop}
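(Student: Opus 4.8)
Both estimates come from realising $\chi_\la K_N^{s,t}$ and $\chi^\la K_N^{s,t}$ as oscillatory integral operators on $S^{d-1}\times S^{d-1}$ and invoking \rl{Lem::oscill_est}; set $r=s/t$. Write $z=s\xi$, $\zeta=t\eta$ with $\xi,\eta\in S^{d-1}$, so that the angle $\thh(z,\zeta)$ equals the geodesic distance $d_{S^{d-1}}(\xi,\eta)$. On the region $\thh\gtrsim N^{-1}$ the corollary to \rp{Prop::g-Tg} expresses $I^N$, hence $K_N^{st}$, as a finite sum of terms $b^{st}(\xi,\eta)\,e^{\pm iM\thh(\xi,\eta)}$ with $M\in\{N,N-1\}$ and $b$ as in \re{bi_sph_est}: an oscillatory kernel with phase the geodesic distance. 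On the complementary region $\thh\lesssim N^{-1}$ the support condition built into $\psi$ forces $|1-r|\gtrsim N^{-1}$, so there $|K_N^{st}|\lesssim N^{d-2}\min\{N,|1-r|^{-1}\}$ by \rp{Prop::I_N-1_est}(ii) and the kernel is supported where $d_{S^{d-1}}(\xi,\eta)\lesssim N^{-1}$; by \rl{Lem::op_norm_AB} with $u\equiv v\equiv 1$ (equivalently, Young's inequality on the sphere) this part contributes at most $N^{d-2}\min\{N,|1-r|^{-1}\}\,N^{-(d-1)/q'}=N^{-1+(d-1)/q}\min\{N,|1-r|^{-1}\}$ to either operator norm, which is dominated by both claimed bounds since $q$ is large. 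So we may assume $\thh\gtrsim N^{-1}$ on the support. The smooth cutoff $\psi$ is harmless: its derivatives are $\lesssim(|\zeta|/N)^{-k}$, which is compatible with the amplitude bounds at all scales below.

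\textbf{Proof of (ii).} On $\supp\chi_\la$ we have $|\sin\thh|\approx\la$, so in a coordinate patch $|\xi-\eta|\approx\rho:=\la$ on the support (the piece near $\thh=\pi$ is handled by $\eta\mapsto-\eta$, symmetrically). The phase satisfies $|D_\xi^\all D_\eta^\beta d_{S^{d-1}}|\lesssim\rho^{1-|\all|-|\beta|}$ and $\na_\xi\na_\eta d_{S^{d-1}}$ has $m=d-2$ eigenvalues of size $\gtrsim\rho^{-1}$, the unique degenerate direction being tangent to the connecting geodesic. By \re{bi_sph_est} factor $b^{st}=N^{\frac d2-1}\la^{-\frac d2}(\la+|1-r|)^{-1}\tilde a$ with $|D_\xi^\all D_\eta^\beta\tilde a|\lesssim\rho^{-|\all|-|\beta|}$, so \rl{Lem::oscill_est} with $\rho=\la$ gives $\|\tilde a\,e^{\pm iM\thh}\chi_\la\|_{p\to p'}\lesssim\la^{2(d-1)/p'}(N\la)^{-(d-2)/p'}$ and hence
\[
\|\chi_\la K_N^{st}\|_{p\to p'}\lesssim N^{\frac d2-1}\la^{-\frac d2}(\la+|1-r|)^{-1}\,\la^{\frac{2(d-1)}{p'}}(N\la)^{-\frac{d-2}{p'}}.
\]
Using $\frac1{p'}=\frac12-\frac1{2q}$, the power of $N$ here is $\frac d2-1-\frac{d-2}{p'}=\frac d{2q}-\frac1q$ and the power of $\la$ is $-\frac d2+\frac d{p'}=-\frac d{2q}$, which is exactly the bound in (ii) after absorbing $(\la+|1-r|)^{-1}$ into $(|1-\tfrac st|+\la)^{-1}$.

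\textbf{Proof of (i).} Now $\la\approx N^{-1/(1+\mu)}$, so $N\la\approx\la^{-\mu}\gg1$, and $\chi^\la$ localises to a cap $|\sin\thh|\lesssim\la$. Reduce to a single product of caps centred at $(\xi_0,\xi_0)$ and rescale to unit size by $\tilde\xi=(\xi-\xi_0)/\la$, $\tilde\eta=(\eta-\xi_0)/\la$. By the scaling law for $L^p\to L^{p'}$ norms this costs a Jacobian factor $\la^{(d-1)/q'}$ (since $(d-1)(1+\tfrac1{p'}-\tfrac1p)=(d-1)/q'$), and the phase $M\thh$ becomes $(M\la)\tilde\thh=\la^{-\mu}\tilde\thh$ with $\tilde\thh$ the rescaled geodesic distance, of unit scale. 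For the amplitude one uses not the lossy \re{bi_sph_est} but the finer expansion inside the proof of \rp{Prop::g-Tg}: $a$ is a finite sum of terms $a_{lpm}(e^{-i\thh}-r)^{-1-l}(\sin\thh)^{-\frac d2-p}$ with $l+p+m=\frac d2-1$ and $|a_{lpm}|\approx N^m$. After rescaling one decomposes each such term dyadically in $\tilde\rho=|\tilde\xi-\tilde\eta|\in[\la^\mu,1]$ and applies \rl{Lem::oscill_est} at frequency $\la^{-\mu}$, scale $\tilde\rho$, and $m=d-2$ eigenvalues; summing over the dyadic scales and multiplying back $\la^{(d-1)/q'}$ assembles the claimed estimate — the $d-1$ in the $\la$-exponent from the Jacobian and the volume factor, the $-\mu(d-2)$ from $(\la^{-\mu}\tilde\rho)^{-(d-2)/p'}$, the $-\frac d{2q}$ from the $(\sin\thh)^{-d/2}$ weight together with the oscillatory gain, and the prefactor $N^{\frac d{2\mu q}-\frac1{\mu q}}$ from the amplitude size $N^{\frac d2-1}$ — while the denominator $(|1-r|+\la^\mu)^{-1}$ is produced by the factor $(|\sin\thh|+|1-r|)^{-1}\approx(\la\tilde\rho+|1-r|)^{-1}$ at the bottom scale $\tilde\rho\approx\la^\mu$. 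The innermost non-oscillatory range $\tilde\rho\lesssim\la^\mu$, i.e.\ geodesic distance $\lesssim N^{-1}$, is covered by the crude estimate of the first paragraph.

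\textbf{Main obstacle.} The eigenvalue count $m=d-2$ for $\na_\xi\na_\eta d_{S^{d-1}}$ and the amplitude derivative bounds feeding \rl{Lem::oscill_est} are short computations, so part (ii) is essentially immediate once the oscillatory lemma is available. The delicate step is part (i): one must carry out the cap rescaling correctly, use the $(l,p,m)$-decomposition of the amplitude rather than \re{bi_sph_est}, and then track precisely how the powers of $N$ and $\la$ from the Jacobian $\la^{(d-1)/q'}$, the amplitude sizes $N^m$, the weights $(\sin\thh)^{-\frac d2-p}$ and $(e^{-i\thh}-r)^{-1-l}$, and the oscillatory gain $(\la^{-\mu}\tilde\rho)^{-(d-2)/p'}$ combine after the dyadic summation, so as to land exactly on the exponents $\frac d{2\mu q}-\frac1{\mu q}$ and $-\frac d{2q}+d-1-\mu(d-2)$. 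This bookkeeping, carried out along the lines of \cite[Section 2]{Tom90}, is the crux of the proof.
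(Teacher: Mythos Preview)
Your proof of (ii) is correct and is exactly the paper's argument.

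For (i) you have made things much harder than they are. The paper uses \emph{no oscillation at all} for this part. It simply takes the pointwise bound $|K_N^{st}(\xi,\eta)|\lesssim N^{d-2}\min\{N,|1-r|^{-1}\}$ from \rp{Prop::I_N-1_est}(ii) (valid everywhere on $\supp\psi$), observes that $\min\{N,|1-r|^{-1}\}\lesssim(\la^\mu+|1-r|)^{-1}$ since $\la^\mu\approx N^{-1}$, notes that for fixed $\xi$ the support $\{\eta:\chi^\la(\xi,\eta)\neq0\}$ has measure $\lesssim\la^{d-1}$, and applies \rl{Lem::op_norm_AB}. This gives
\[
\|\chi^\la K_N^{st}\|_{p\to p'}\lesssim N^{\,d-2-(d-1)/(\mu q')}(\la^\mu+|1-r|)^{-1},
\]
which is then rewritten in the stated form using $N^{-1}\approx\la^\mu$. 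In other words, the ``crude estimate'' of your first paragraph, applied not just to the inner ring $\thh\lesssim N^{-1}$ but to the entire cap $\thh\lesssim\la$, \emph{is} the proof of (i). The cap rescaling, the dyadic decomposition in $\tilde\rho$, and the $(l,p,m)$-bookkeeping --- which you yourself flag as the crux and never actually carry out --- are all unnecessary here; the parameter $\mu$ enters only through the support size of $\chi^\la$, not through any oscillatory gain.
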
 
\begin{proof} 
As before we denote $r=\frac{s}{t}$. By \rp{Prop::I_N-1_est} (ii), we have 
\begin{align*}
 \| \chi^\la K^{s,t}_N \|_{\infty} 
&= \| \chi^\la \psi I^N (s \, \cdot, t \, \cdot) \|_{L^\infty (S^{d-1} \times S^{d-1})} 
\\ &\leq N^{d-2} \min \{ N, |1-r|^{-1} \} 
\\ &\leq N^{d-2} \left( \la^\mu + |1-r|\right)^{-1}.  
\end{align*}
To see the last inequality, we consider two cases. First, suppose $N < |1-r|^{-1}$. Since $\la \approx N^{-\frac{1}{\mu}}$, we have $\la^{-\mu} \approx N$ and therefore $\la^{-\mu} \lesssim |1-r|^{-1}$ or $\la^\mu \gtrsim |1-r|$. Thus we have 
\[
 N \approx \la^{-\mu} \lesssim (2 \la^\mu)^{-1} 
 \lesssim (\la^\mu + |1-r|)^{-1}. 
\]
On the other hand if $N > |1-r|^{-1}$, then $\la^{-\mu} \gtrsim |1-r|^{-1}$ or $\la^\mu \lesssim |1-r|$. It follows that 
\[
  |1-r|^{-1} \lesssim (2|1-r|)^{-1} 
  \lesssim (\la^\mu + |1-r|)^{-1}. 
\]

For fixed $\xi$, the set $\{ \eta \in S^{d-1}: \chi^\la K_N (\xi, \eta) \neq 0 \}$ has measure $\approx$ $N^{-\frac{d-1}{\mu}}$, since $\supp 
\chi^\la \subset \{ |\sin \thh (\xi, \eta)| \lesssim N^{-\frac{1}{\mu}} \}$. Hence 
\begin{align*}
 \sup_{\xi \in S^{d-1} } 
\| \chi^\la K^{s,t}_N (\xi, \cdot) \|_{L^{q'} ( d \eta)} 
&= \sup_{\xi \in S^{d-1} }  
\left( \int_{S^{d-1}} \left| \chi^\la K^{s,t}_N (\xi, \eta)  \right|^{q'} \, d \eta \right)^{\frac{1}{q'}} 
\\ &\lesssim N^{d-2 - \frac{d-1}{\mu q'}}  \left( \la^\mu + |1- r|  \right)^{-1}.  
\end{align*}

Similarly, we have 
\[
  \sup_{\eta \in S^{d-1} } 
\| \chi^\la K^{s,t}_N (\cdot, \eta) \|_{L^{q'} ( d \xi)} 
\lesssim  N^{d-2 - \frac{d-1}{\mu q'}}  \left( \la^\mu + |1- r| \right)^{-1}. 
\] 
By \rl{Lem::op_norm_AB}, we get 
\begin{align*}
 \| \chi^\la K^{s,t}_N \|_{p \rightarrow p'} 
&\lesssim N^{d-2 - \frac{d-1}{\mu q'}}  \left(\la^\mu + |1- r| \right)^{-1}. 
\end{align*}
Since 
\begin{align*}
d-2 - \frac{d-1}{\mu q'} &=  d-2 - \frac{d-1}{\mu } \left(1-\frac{1}{q} \right)
\\ &= \frac{d-1}{\mu q} + \left( d-2 - \frac{d-1}{\mu } \right)
\\ &= \left( \frac{d}{2 \mu q} - \frac{1}{\mu q} \right) - \left( - \frac{d}{2\mu q} + \frac{d-1}{\mu } - (d-2) \right) ,
\end{align*}
we have 
\[
  N^{d-2-\frac{d-1}{\mu q'}} = N^{\frac{d}{2 \mu q} - \frac{1}{\mu q}} (N^{-1})^{ - \frac{d}{2 \mu q} + \frac{d-1}{\mu } - (d-2) }. 
\]
Now $N^{-1} \approx \la^\mu $, so we have 
\[
   N^{d-2-\frac{d-1}{\mu q'}} \approx N^{\frac{d}{2\mu q} - \frac{1}{\mu q}} \la^{-\frac{d}{2q} + d-1 - \mu (d-2)} . 
\]
Hence 
\[ 
   \| \chi^\la K^{s,t}_N \|_{p \rightarrow p'} 
\lesssim N^{\frac{d}{2 \mu q} - \frac{1}{\mu q}} \la^{-\frac{d}{2q} + d-1 - \mu (d-2)} \left( \la^\mu + |1- r| \right)^{-1} . 
\]
\medskip 

\noindent 
(ii) Assuming that $|\sin \thh| \geq \frac{1}{200} N^{-1}$, we can write $I_j^N(z,\zeta)$ as a linear combination of 
\[
  b(z,\zeta) e^{iN\thh}, \quad b(z,\zeta) e^{-iN\thh},   
\] 
where $b(z,\zeta)$ is some function satisfying (\re{a_z-zeta_bd}):  
\[
  \left| D^\all_\xi D^\beta_\eta 
 \left[ b (s \xi, t\eta) 
\right] \right| \leq C_{d,\all,\beta} N^{\frac{d}{2} -1} |\sin \thh|^{-\frac{d}{2}-|\all|-|\beta|} (|\sin \thh| + |1-r|)^{-1}. 
\] 
Set 
\[
  \wti b(\xi,\eta) = N^{1-\frac{d}{2}} \la^{\frac{d}{2}} \left( \la + \left| 1- r \right| \right) b(s \xi, t \eta), \quad r =\frac{s}{t}. 
\] 
Since $\supp \chi_\la \subset \{ \sin \thh \approx \la \} $, we have  
\[
 |D^\all_\xi D^\beta_\eta (\chi_\la \wti b) | 
\leq C_{d,\all,\beta} \la^{-|\all|-|\beta| } . 
\]
Next, we observe that the phase function $\thh: S^{d-1} \times S^{d-1}$ has the property that $\na_x \na_y \thh$ (relative to a coordinate system) has $d-2$ eigenvalues with magnitude $\approx |\sin \thh|^{-1}$.  
Now, we can apply \rl{Lem::oscill_est} with $\rho= c \la $ and $m=d-2$. It follows that 
$\chi_\la \wti b e^{ \pm i N \thh}$ is $L^p(S^{d- 
1}) \to L^{p'}(S^{d-1})$ bounded with norm $\lesssim                                       \la^{\frac{2(d-1)}{p'}} (N\la)^{-\frac{d-2}{p'}}  $, and 
\begin{align*}
 \| \chi_\la K^{s,t}_N \|_{p \to p'} 
&\lesssim N^{\frac{d}{2}-1} \la^{-\frac{d}{2}} 
\left( \la + \left| 1- \frac{s}{t} \right| \right)^{-1} \la^{\frac{2(d-1)}{p'}} (N\la)^{-\frac{d-2}{p'}} 
\\&= N^{\frac{d-2}{2r}} \la^{-\frac{d}{2q}} \left( \la + \left| 1- \frac{s}{t} \right| \right)^{-1}. \qedhere
\end{align*}
 \end{proof}

\begin{prop}
 For sufficiently large $\nu \in \R^+$, there is a kernel $L_\nu$ such that 
\[
  |z|^{-\nu} u(z) = \int_{\C^n} L_\nu(z,\zeta) |\zeta|^{-\nu} \we \db u(\zeta), \quad u\in C^\infty_c(\C^n \sm \{0 \} )  
\] 
and $L_\nu = M_\nu + N_\nu$ where for suitable $C$, 
\begin{enumerate}[(i)]
  \item 
  Suppose $1 < p, q < \infty$ with $\displaystyle \frac{1}{p} - \frac{1}{q} = \frac{1}{m}$, for some $m > d$. Then $M_\nu$ maps $L^p(B(0,1))$ to $L^q (\R^d)$ with norm bounded independently of $\nu$. 
  \item 
  $N_\nu (z,\zeta) = 0$ if $|z - \zeta| < (200 
\nu)^{-1} |\zeta|$, and for any $\la > (200 \nu)^{-1}$, 
\begin{equation} \label{chi_la_Nst_nu_est}  
\| \chi_\la N_\nu^{s,t} \|_{p \to p'} 
\leq C_{d,p} 
  \nu^{\frac{d}{2q} - \frac{1}{q}} \la^{-\frac{d}{2q} } (st)^{-\frac{d-1}{2}} W_{s,t,\la} 
\end{equation} 
where 
\begin{align*}
 W_{s,t,\la}    
\leq \begin{cases} \displaystyle
    \left( \frac{s}{t} \right)^\rho, & \text{if $\displaystyle s < \frac{t}{2}$}, 
\\[8pt] \displaystyle
   \left( \left| 1- \frac{s}{t} \right| + \la \right)^{-1}, & \text{if $ \displaystyle \frac{t}{2} < s < 2t$},  
\\[8pt] \displaystyle
    \left( \frac{t}{s} \right)^{1-\rho}, & \text{if $ \displaystyle s > 2t$}.
\end{cases}
\end{align*}  

Here $\rho$ is defined to be the number in $[0,1)$ such that $\nu - \frac{d-1}{2} + \rho$ is an integer. 
\item If $ \displaystyle \frac{1}{200} \frac{1}{\nu^\frac{1}{\mu}} < \la < \frac{200}{\nu^\frac{1}{\mu}}$, 
then the following estimate holds: 
\begin{equation} \label{chi^la_Nst^nu_est}  
\| \chi^\la N_\nu^{s,t} \|_{p \to p'} 
\leq C_{d,p}  
  \nu^{\frac{d}{2 \mu q} - \frac{1}{\mu q}} \la^{-\frac{d}{2q} + (d-1) - \mu(d-2) } (st)^{-\frac{d-1}{2}} W_{s,t,\la^\mu}.  
\end{equation} 
\end{enumerate}
\end{prop}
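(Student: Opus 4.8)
The plan is to build $L_\nu$ by homogenizing the integer-order kernel $I^N$ of \rp{Prop::IN_hg_form}, choosing the Taylor degree $N$ so that the leftover power of $r=|z|/|\zeta|$ is precisely the prescribed $\rho$. First I would fix $N\in\Z$ by $N=\nu-\tfrac{d-1}{2}+\rho$, $\rho\in[0,1)$ (taking $\nu$ large enough that $N\ge2$, as \rp{Prop::K^st_N-1_est} requires), and start from the identity of \rp{Prop::IN_hg_form}, namely $|z|^{-N}u(z)=\int I^N(z,\zeta)\,|\zeta|^{-(N+d-1)}\we\db u(\zeta)$ with $[I^N]_j=(|\zeta|/|z|)^N|\zeta|^{d-1}(B_j-P_j^{N-1})$. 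Multiplying by $|z|^{N-\nu}$ and redistributing powers of $|\zeta|$ gives $|z|^{-\nu}u(z)=\int L_\nu\,|\zeta|^{-\nu}\we\db u$ with
\[
  [L_\nu]_j(z,\zeta)=|z|^{-\nu}|\zeta|^{\nu}\bigl(B_j-P_j^{N-1}\bigr)=|z|^{N-\nu}|\zeta|^{\nu-N-d+1}[I^N]_j(z,\zeta),
\]
and, because $\rho=N-\nu+\tfrac{d-1}{2}$, a direct check at $z=s\xi$, $\zeta=t\eta$ with $|\xi|=|\eta|=1$ yields
\[
  L_\nu^{s,t}=(st)^{-\frac{d-1}{2}}\,r^{\rho}\,(I^N)^{s,t},\qquad r=\tfrac st .
\]
I would then take $\psi$ to be the cutoff of \rp{Prop::K^st_N-1_est} ($\psi\equiv1$ for $|z-\zeta|>\tfrac1N|\zeta|$, $\psi\equiv0$ for $|z-\zeta|<\tfrac1{2N}|\zeta|$; since $N\le100\nu$ this is $0$ on $\{|z-\zeta|<(200\nu)^{-1}|\zeta|\}$) and set $N_\nu=\psi L_\nu$, $M_\nu=(1-\psi)L_\nu$, so $L_\nu=M_\nu+N_\nu$ and, decisively, $N_\nu^{s,t}=(st)^{-\frac{d-1}{2}}r^{\rho}K_N^{s,t}$ where $K_N=\psi I^N$ is exactly the kernel estimated in \rp{Prop::K^st_N-1_est}.

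\noindent\textbf{The near-diagonal piece $M_\nu$ (part (i)).} On $\supp(1-\psi)\subset\{|z-\zeta|\le\tfrac1N|\zeta|\}$ we have $|r-1|\le|z-\zeta|/|\zeta|\le1/N$, so the weight $|z|^{-\nu}|\zeta|^{\nu}=r^{-\nu}$ stays between two positive constants uniformly in $\nu$ (because $N/\nu\to1$); moreover \rp{Prop::IN-1_estimate_1}(i) with $k=0$ gives $|P_j^{N-1}|\le C_dN^{d-1}|\zeta|^{-(d-1)}(|z|/|\zeta|)^N\lesssim N^{d-1}|\zeta|^{-(d-1)}\lesssim|z-\zeta|^{-(d-1)}$ on the support, while $|B_j|\lesssim|z-\zeta|^{-(d-1)}$ always. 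Hence $|M_\nu(z,\zeta)|\lesssim|z-\zeta|^{-(d-1)}\mathbf 1_{\{|z-\zeta|<|\zeta|/N\}}$ uniformly in $\nu$; for $\zeta\in B(0,1)$ this is dominated by the convolution kernel $|z-\zeta|^{-(d-1)}\mathbf 1_{\{|z-\zeta|<1\}}\in L^{s}(\R^d)$ for every $s<\tfrac d{d-1}$, and its $z$-support lies in $B(0,2)$. Taking $s=\tfrac m{m-1}$ — which satisfies $s<\tfrac d{d-1}$ exactly when $m>d$ — Young's inequality gives $L^p(B(0,1))\to L^q(\R^d)$ boundedness for $\tfrac1p-\tfrac1q=\tfrac1m$ with norm independent of $\nu$.

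\noindent\textbf{The far piece $N_\nu$ (parts (ii), (iii)).} Here I would simply feed the identity $N_\nu^{s,t}=(st)^{-\frac{d-1}{2}}r^{\rho}K_N^{s,t}$ into \rp{Prop::K^st_N-1_est}: since $(st)^{-\frac{d-1}{2}}r^{\rho}$ is a scalar not depending on the sphere variables, $\|\chi_\la N_\nu^{s,t}\|_{p\to p'}=(st)^{-\frac{d-1}{2}}r^{\rho}\|\chi_\la K_N^{s,t}\|_{p\to p'}$ and likewise for $\chi^\la$, so after replacing $N$ by $\nu$ in the $N$-powers (legitimate since $N/\nu\to1$) it remains only to check $r^{\rho}(|1-r|+\la)^{-1}\lesssim W_{s,t,\la}$, together with its analogue with $\la^\mu$. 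This is a short three-case verification: for $t/2<s<2t$, $r\approx1$ so $r^\rho\approx1$ and $W=(|1-r|+\la)^{-1}$; for $s<t/2$, $|1-r|>\tfrac12$ so $(|1-r|+\la)^{-1}\lesssim1$ while $r^\rho=(s/t)^{\rho}=W$; for $s>2t$, $|1-r|=r-1>r/2$ so $r^\rho(|1-r|+\la)^{-1}\lesssim r^{\rho-1}=(t/s)^{1-\rho}=W$. Part (iii) then comes out of \rp{Prop::K^st_N-1_est}(i) in the same way, after the evident reindexing of the auxiliary parameter $\mu$.

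\noindent\textbf{Main obstacle.} No genuinely hard estimate remains at this stage: the oscillatory-integral analysis on $S^{d-1}$ is already packaged in \rp{Prop::K^st_N-1_est}, and the near-diagonal part is a truncated Riesz-$1$ potential. The real care is the homogeneity bookkeeping — the whole point of $N=\nu-\tfrac{d-1}{2}+\rho$ is that it makes the surplus powers of $|z|$ and $|\zeta|$ collapse into the symmetric scalar $(st)^{-\frac{d-1}{2}}r^{\rho}$, which is what turns $N_\nu^{s,t}$ into a scalar multiple of $K_N^{s,t}$ — and the observation that on the diagonal neighbourhood both the weight $(|z|/|\zeta|)^{-\nu}$ and the Taylor remainder $P_j^{N-1}$ are controlled uniformly in $\nu$, so that $M_\nu$ does not blow up. Once these two points are in place, the $W_{s,t,\la}$ computation and the passage from $N$ to $\nu$ are routine.
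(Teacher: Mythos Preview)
Your proposal is correct and follows essentially the same approach as the paper: the same kernel $L_\nu(z,\zeta)=|z|^{-\frac{d-1}{2}+\rho}|\zeta|^{-\frac{d-1}{2}-\rho}I^N(z,\zeta)$, the same splitting $M_\nu=(1-\psi)L_\nu$, $N_\nu=\psi L_\nu$, and the same reduction of parts (ii)--(iii) to \rp{Prop::K^st_N-1_est} via the scalar factor $(st)^{-\frac{d-1}{2}}r^\rho$. The only minor difference is in part (i), where you invoke Young's convolution inequality on the truncated Riesz kernel while the paper appeals to \rl{Lem::op_norm_AB}; both arguments are equivalent here and yours is arguably more direct.
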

\begin{proof}
  Write $\nu = N + \frac{d-1}{2} - \rho$, where $N \in \Z$, and $\rho$ is as above. Define 
\[
  L_\nu(z,\zeta) = |z|^{-\frac{d-1}{2} + \rho } |\zeta|^{-\frac{d-1}{2}-\rho} I^N (z,\zeta). 
\]
Since $ -\frac{d-1}{2} - \rho - \nu = -(d-1) -N $, we have 
\begin{align*}
 \int L_\nu (z,\zeta) |\zeta|^{-\nu} \we \db u(\zeta) 
&= |z|^{-\frac{d-1}{2} + \rho } \int I^N (z,\zeta) |\zeta|^{-(N+d-1)} \we \db u(\zeta)
\\ &= |z|^{-\frac{d-1}{2} + \rho} |z|^{-N} u(z) = |z|^{-\nu} u(z). 
\end{align*}
Define $M_\nu = (1-\psi) L_\nu $, $N_\nu = \psi L_\nu$. By \rp{Prop::I_N-1_est} (i), we have 
\[
  | I^N(z,\zeta) | \lesssim |\zeta|^{d-1} |\zeta -z|^{-(d-1)}, \quad \text{if $|\zeta-z| \lesssim \frac{|\zeta|}{N} $}. 
\]
This implies that 
\begin{align*}
 |M_\nu(z,\zeta)|
\lesssim (|z| |\zeta| ) ^{-\frac{d-1}{2}} \left( \frac{|z|}{|\zeta|} \right)^\rho |\zeta|^{d-1} |\zeta -z|^{-(d-1)}, 
\end{align*}
and $\supp M_\nu \subset \{ |\zeta -z| < N^{-1}|\zeta| \}$.  
For $|\zeta-z| \lesssim \frac{|\zeta|}{N}$, the triangle inequalities give $|z| \approx |\zeta|$ for all $N$ sufficiently large. Thus 
\[
  |M_\nu(z,\zeta)| \lesssim |\zeta|^{-(d-1)} |\zeta|^{d-1}  |\zeta -z|^{-(d-1)}
  = |\zeta -z|^{-(d-1)}.
\]
By \rl{Lem::op_norm_AB} it suffices to show that 
\[
  \| M_\nu (z,\zeta) \|_{L^{m'}(B(0,1))} \leq C, \quad m' = 
\frac{m}{m-1},  
\]
for some constant $C$ independent of $z$ and $\zeta$ and $\nu$. Indeed, since $ m >d$, $m' < d'= \frac{d}{d-1}$. Hence
\begin{align*}
 \int_{B(0,1)} |M_\nu(z,\zeta)|^{m'} d V(\zeta) \lesssim \int_{B(0,1)} |\zeta-z|^{-m(d-1)} d V(\zeta) \leq C.  
\end{align*}
\medskip

\noindent (ii) 
By \rp{Prop::K^st_N-1_est} (ii), for $\la > (200 \nu)^{-1}$, 
\begin{align*}
\| \chi_\la N^{s,t}_{\nu} \|_{p \to p'} 
&= \| \chi_\la \psi L^{s,t}_\nu \|_{p \to p'}  
\\ &= \left( \frac{s}{t} \right)^\rho (st)^{-\frac{d-1}{2}} \| \chi_\la K^{s,t}_N \|_{p \to p'} 
\\ &\lesssim N^{\frac{d}{2q} - \frac{1}{q}} \la^{-\frac{d}{2q}}  \left( \frac{s}{t} \right)^\rho (st)^{-\frac{d-1}{2}} \left( \left| 1- \frac{s}{t} \right| + \la \right)^{-1}.
\end{align*} 
Denote $W_{s,t,\la} :=  \left( \frac{s}{t} \right)^\rho \left( \left| 1- \frac{s}{t} \right| + \la \right)^{-1}$. The following is checked easily. 
\[
W_{s,t,\la} 
\lesssim 
\begin{cases} \displaystyle
    \left( \frac{s}{t} \right)^\rho, & \text{if $\displaystyle s < \frac{t}{2}$}, 
\\[8pt] \displaystyle
   \left( \left| 1- \frac{s}{t} \right| + \la \right)^{-1} & \text{if $ \displaystyle \frac{t}{2} < s < 2t$}, 
\\[8pt] \displaystyle 
    \left( \frac{t}{s} \right)^{1-\rho}, & \text{if $ \displaystyle s > 2t$}.
\end{cases}
\]
\medskip
(iii) By \rp{Prop::K^st_N-1_est} (i), if $\la \approx \nu^{-\frac{1}{\mu}}$, we have 
\begin{align*}
 \| \chi^\la N^{s,t}_\nu \|_{p \rightarrow p'} 
&= \| \chi^\la \psi L_\nu^{s,t} \| 
\\ &= \left( \frac{s}{t} \right)^\rho (st)^{-\frac{d-1}{2}} \| \chi^\la K^{s,t}_N \|_{p \to p'} 
\\ &\lesssim N^{\frac{d}{2 \mu q} - \frac{1}{\mu q}} \la^{-\frac{d}{2q} + d-1 - \mu(d-2)} (st)^{-\frac{d-1}{2}} W_{s,t,\la^\mu}. 
\qedhere\end{align*}
\end{proof}

\section{Carleman Inequalities} 
Fix $p$ and $q$ with $ \displaystyle \frac{1}{p} - \frac{1}{p'} = \frac{1}{q}$. For $z,\zeta \in \C^n$ we let $s = |z|$, $t= |\zeta|$, $\si = \log (1/s)$, $\tau 
= \log (1/t)$. If $\gm \subset \R$, then $\gm_\ast = \{ s \in \R: \log (1/s) \in \gm \}$, $A(\gm) = \{ x \in \R^d: |x| \in \gm_\ast \}$. We write $|\gm|' = \min \{\gm, \nu^{-\yh} \}$. 

Let $\psi: \R^+ \to \R$ be increasing and convex. We follow Wolff and osculate the weight $|x|^{-N}$ to $e^{\nu \psi(\tau)}$.
\begin{prop}
The following formula holds 
\begin{equation} \label{Pnu_formula_CCinfty} 
 e^{\nu \psi (\si)} u (z) = \int_{\C^n} P_\nu (z,\zeta) e^{\nu \psi(\tau)} \we \db u(\zeta), \quad u \in C^\infty_c(\C^n \sm \{ 0 \}), 
\end{equation}
where $P_\nu (z,\zeta) := e^{-\nu(\psi(\tau) - \psi(\si) 
- \psi'(\si)(\tau -\si))} L_{\nu \psi'(\si)} (z,\zeta) $. 
\end{prop}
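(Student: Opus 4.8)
The plan is to deduce \re{Pnu_formula_CCinfty} from the unweighted representation formula $|z|^{-\nu}u(z) = \int_{\C^n} L_\nu(z,\zeta)|\zeta|^{-\nu}\we\db u(\zeta)$ of the preceding proposition, applied with the parameter $\mu := \nu\psi'(\si)$ in place of $\nu$; for each fixed $z$ this parameter is simply a (large) positive constant, so the whole argument is a bookkeeping of exponential weights and carries no analytic difficulty.

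First I would fix $z\ne 0$ with $\si := \log(1/|z|)$ in the domain of $\psi$, and set $\mu := \nu\psi'(\si)$; since $\psi$ is increasing we have $\psi'(\si) > 0$, and for $\nu$ large $\mu$ is large enough that the preceding proposition applies at parameter $\mu$. (In the range of $\si$ relevant to the Carleman inequalities $\psi'$ is bounded below, so this holds uniformly in $z$.) Applying that proposition to $u\in C^\infty_c(\C^n\sm\{0\})$ at parameter $\mu$ gives
\[
 |z|^{-\mu}u(z) = \int_{\C^n} L_{\mu}(z,\zeta)\,|\zeta|^{-\mu}\we\db u(\zeta).
\]
Using $\si = -\log|z|$ and $\tau = -\log|\zeta|$, we have $|z|^{-\mu} = e^{\mu\si}$ and $|\zeta|^{-\mu} = e^{\mu\tau}$, so with $\mu = \nu\psi'(\si)$ this becomes
\[
 e^{\nu\psi'(\si)\si}\,u(z) = \int_{\C^n} L_{\nu\psi'(\si)}(z,\zeta)\, e^{\nu\psi'(\si)\tau}\we\db u(\zeta).
\]

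Next I would multiply both sides by the factor $e^{\nu(\psi(\si) - \psi'(\si)\si)}$, which depends on $z$ but not on $\zeta$. The left side becomes $e^{\nu\psi(\si)}u(z)$, the desired left-hand side of \re{Pnu_formula_CCinfty}. Inside the integral the scalar weight accumulates to
\[
 e^{\nu(\psi(\si) - \psi'(\si)\si)}\, e^{\nu\psi'(\si)\tau} = e^{\nu(\psi(\si) + \psi'(\si)(\tau - \si))} = e^{-\nu(\psi(\tau) - \psi(\si) - \psi'(\si)(\tau-\si))}\, e^{\nu\psi(\tau)},
\]
the last step being the trivial rearrangement $\psi(\si) + \psi'(\si)(\tau-\si) = \psi(\tau) - \big(\psi(\tau) - \psi(\si) - \psi'(\si)(\tau-\si)\big)$. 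Since $P_\nu(z,\zeta) = e^{-\nu(\psi(\tau) - \psi(\si) - \psi'(\si)(\tau-\si))}L_{\nu\psi'(\si)}(z,\zeta)$ by definition, this is precisely \re{Pnu_formula_CCinfty}.

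The one point deserving a remark — rather than a genuine obstacle — is that the parameter of $L$ now depends on $z$: the preceding proposition provides a family of identities indexed by the large parameter, each valid pointwise in $z$, and we simply use, for each $z$ separately, the member of this family at the value $\nu\psi'(\si(z))$; no uniformity or continuity of the identities in the parameter is required. (The convexity of $\psi$, which makes the exponent $\psi(\tau) - \psi(\si) - \psi'(\si)(\tau-\si)$ nonnegative, plays no role here but will be essential when estimating $P_\nu$ later.)
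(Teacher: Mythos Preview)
Your proof is correct and follows essentially the same approach as the paper: apply the representation formula for $L_{\nu\psi'(\si)}$ at the $z$-dependent parameter $\mu=\nu\psi'(\si)$, then multiply through by $e^{\nu(\psi(\si)-\psi'(\si)\si)}$ and regroup the exponents. The paper carries this out as a single chain of equalities without the surrounding commentary, but the algebra is identical.
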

\begin{proof}
Since $e^{\nu \psi'(\si)\si} = |z|^{-\nu \psi'(\si)} $, we get    
\begin{align*}
  e^{\nu \psi(\si)} u(z) &= e^{\nu (\psi(\si)- \psi'(\si) \si)} |z|^{-\nu \psi'(\si)} u(z) 
  \\&= \int e^{\nu (\psi(\si)- \psi'(\si) \si)}  L_{\nu \psi'(\si)} (z,\zeta) |\zeta|^{-\nu \psi'(\si)} \we \db u (\zeta) 
  \\ &= \int e^{\nu (\psi(\si)- \psi'(\si) \si)} 
  L_{\nu \psi'(\si)} (z,\zeta) e^{\nu (\psi'(\si) \tau-\psi(\tau))} \we  e^{\nu \psi(\tau)} \db u(\zeta) 
  \\ &= \int e^{\nu(\psi(\si)-\psi(\tau) + \psi'(\si)(\tau - \si))}  L_{\nu \psi'(\si)} \we  e^{\nu \psi(\tau)}\db u(\zeta). \qedhere
\end{align*}
\end{proof}
\begin{lemma} \label{Lem::min_est}
  Let $\la >0$, $\rho \geq 0$, and $q' \in (1,\infty)$. Then for intervals $\gm \subset \R$, there exists a constant $C_q$ depending only on $q$ such that 
\[
  \| e^{-\rho x^2} (|x|+\la)^{-1} \|_{L^{q'} (\gm)} \leq C_{q'} a^{-1} \min \{ \la, |\gm|, \rho^{-\yh} \}^{1/q'}.   
\]
\end{lemma}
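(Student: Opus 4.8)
The plan is to obtain three independent upper bounds for $\| e^{-\rho x^2}(|x|+\la)^{-1}\|_{L^{q'}(\gm)}$, one matching each entry of the minimum, and then to take the smallest of them. Two trivial pointwise inequalities will be used repeatedly: $e^{-\rho x^2}\le 1$ and $(|x|+\la)^{-1}\le\la^{-1}$. (I read the factor written $a^{-1}$ in the statement as $\la^{-1}$.)

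For the entry $|\gm|$, bound the integrand by its crudest value, $e^{-\rho x^2}(|x|+\la)^{-1}\le\la^{-1}$, and integrate over $\gm$; this gives $\|\cdot\|_{L^{q'}(\gm)}\le\la^{-1}|\gm|^{1/q'}$. For the entry $\la$, keep the factor $(|x|+\la)^{-1}$ but drop the Gaussian and enlarge $\gm$ to $\R$: the elementary computation $\int_\R(|x|+\la)^{-q'}\,dx=\tfrac{2}{q'-1}\la^{1-q'}$, which converges precisely because $q'>1$, yields $\|\cdot\|_{L^{q'}(\gm)}\le\bigl(\tfrac{2}{q'-1}\bigr)^{1/q'}\la^{-1}\la^{1/q'}$. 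For the entry $\rho^{-1/2}$, keep the Gaussian but use $(|x|+\la)^{-1}\le\la^{-1}$ and again integrate over $\R$; the standard Gaussian integral $\int_\R e^{-q'\rho x^2}\,dx=(\pi/(q'\rho))^{1/2}$ gives $\|\cdot\|_{L^{q'}(\gm)}\le(\pi/q')^{1/(2q')}\,\la^{-1}(\rho^{-1/2})^{1/q'}$ (this bound is vacuous when $\rho=0$, in which case $\rho^{-1/2}=\infty$ does not participate in the minimum anyway).

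Each of the three right-hand sides has the shape $C\,\la^{-1}X^{1/q'}$ with $X$ ranging over $\{|\gm|,\la,\rho^{-1/2}\}$ and $C$ depending only on $q'$, so taking the minimum of the three and using $\min_i X_i^{1/q'}=(\min_i X_i)^{1/q'}$ gives the claim with $C_{q'}=\max\{1,(\tfrac{2}{q'-1})^{1/q'},(\pi/q')^{1/(2q')}\}$. There is no real obstacle here; the only place the hypothesis $q'\in(1,\infty)$ enters is the convergence of $\int_\R(|x|+\la)^{-q'}\,dx$, which fails at $q'=1$.
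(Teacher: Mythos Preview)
Your proof is correct and follows essentially the same approach as the paper's: both establish three separate bounds corresponding to the three entries $|\gm|$, $\la$, $\rho^{-1/2}$ of the minimum, using the same pointwise inequalities $e^{-\rho x^2}\le 1$ and $(|x|+\la)^{-1}\le \la^{-1}$ in the appropriate places. The paper organizes the argument as a case split on which quantity realizes the minimum, whereas you prove all three bounds unconditionally and take the minimum at the end; your handling of the Gaussian integral (quoting $\int_\R e^{-q'\rho x^2}\,dx=(\pi/(q'\rho))^{1/2}$ directly) is also slightly cleaner than the paper's polar-coordinates computation, but these are cosmetic differences.
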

\begin{proof}
    This is stated in \cite{Tom90} and we will provide details. There are three cases according to the relative size of $\la, |\gm|, \rho^{-\yh} $. In what follows we denote 
    \[
    I = \int_{\gm} e^{\rho q'x^2} (|x|+\la)^{-q'} \, dx. 
    \] 
We assume that $|x|<a$ for all $x \in \gm$. 
\textit{Case 1: $\rho^{-\yh} = \min \{ \la, |\gm|, \rho^{-\yh} \} $.} 
\begin{align*}
    I \leq \la^{-q'} \int_{\gm} e^{-\rho q' x^2} \, dx &= \la^{-q'} \left( \int_{\gm \times \gm} e^{-\rho q'(x^2+y^2)} \, dx \, dy \right)^{\yh} 
\\ &\leq \la^{-q'} \left( \int_{B(0,\sqrt 2 a ) } e^{-\rho q' r^2} \, r \, dr \, d \thh \right)^\yh  
\approx \la^{-q'} \left( \int_0^{\sqrt 2 a} e^{-\rho q' u} \, du \right)^{\yh}  
\\ &= \la^{-q'} \left[ \frac{1}{\rho q'} (1- e^{\sqrt 2 \rho q' a}) \right]^{\yh} \leq \frac{1}{q'} \la^{-q'} \rho^{-\yh}. 
\end{align*}
\\ \smallskip
\textit{Case 2: $|\gm| = \min \{ \la, |\gm|, \rho^{-\yh} \} $}
\begin{align*}
  I \leq \la^{-q'} \int_{\gm} e^{-\rho q' x^2} \, dx\leq \la^{-q'} |\gm|.    
\end{align*}
\\ \smallskip
\textit{Case 3: $\la = \min \{ \la, |\gm|, \rho^{-\yh} \} $}. It suffices to take $\gm = (\xi,\eta) \subset [0,\infty)$. 
\begin{align*}
  I \leq \int_{\gm} (|x|+\la)^{-q'} \, dx
= \int_\xi^\eta (x +\la)^{-q'} \, dx
= \frac{1}{q'-1} \left[ (\xi+\la)^{1-q'} - (\eta+\la)^{1-q'} \right] \leq \frac{1}{q'-1} \la^{1-q'}. 
\end{align*}

\end{proof}
\begin{prop} \label{Prop::Car_ineq} 
    Suppose $2d^2 -1 \leq q <\infty$ and let $\psi: \R^+ \to \R$ be $C^2$ and satisfies the following conditions: there is $C >0$ such that $C^{-1} < \psi'(\si) < C$ for all $\si$, and for any $\del >0$ there is $C_\del >0$ such that $\psi''(\si) \geq C_\del e^{-\del \si}$. 
    Let $1 < \mu < \frac{d}{d-1}$. Then for $\nu$ large enough and $\beta, \gm$ intervals with $\min \{ |\beta|, |\gm| \} \geq \nu^{-1}$, we have
\begin{equation} \label{Car_ineq} 
 \| 1_{A(\psi^{-1} \gm)} P_\nu   1_{A(\psi^{-1} \beta)} \|_{p \to p'} 
 \leq C_{d,p} \nu^{\frac{d-1}{q}+ \frac{1}{\mu} - \frac{1}{2\mu q'}} (|\gm|')^{\frac{1}{2q'}} .   
\end{equation}
\end{prop}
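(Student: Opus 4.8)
The plan is to decompose $P_\nu$ via the splitting $L_{\nu\psi'(\si)} = M_{\nu\psi'(\si)} + N_{\nu\psi'(\si)}$ of the previous proposition, writing $P_\nu = P_\nu^M + P_\nu^N$ where $P_\nu^M(z,\zeta) = e^{-\nu(\psi(\tau)-\psi(\si)-\psi'(\si)(\tau-\si))}M_{\nu\psi'(\si)}(z,\zeta)$ and $P_\nu^N$ is defined in the same way. Two elementary observations are used throughout. First, since $C^{-1}<\psi'<C$, the parameter $\nu\psi'(\si)$ is comparable to $\nu$, so the kernel estimates of the previous proposition apply to $M_{\nu\psi'(\si)}$ and $N_{\nu\psi'(\si)}$ with $\nu\psi'(\si)$ replaced by $\nu$ up to constants. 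Second, by Taylor's theorem and convexity, $\psi(\tau)-\psi(\si)-\psi'(\si)(\tau-\si)=\tfrac12\psi''(\xi_0)(\tau-\si)^2\geq 0$ for some $\xi_0$ between $\si$ and $\tau$, so the exponential factor is always $\leq 1$; moreover on the localized region $z\in A(\psi^{-1}\gm)$, $\zeta\in A(\psi^{-1}\beta)$ the logarithms $\si,\tau$ lie in bounded intervals, so $\psi''\geq C_\del e^{-\del\si}$ gives $\psi(\tau)-\psi(\si)-\psi'(\si)(\tau-\si)\gtrsim(\tau-\si)^2$, i.e. the weight is dominated by $e^{-c\nu(\si-\tau)^2}$.

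For $P_\nu^M$, recall from the proof of the previous proposition that $M_{\nu\psi'(\si)}$ is supported in $\{|z-\zeta|<(\nu\psi'(\si))^{-1}|\zeta|\}$ with $|M_{\nu\psi'(\si)}(z,\zeta)|\lesssim|z-\zeta|^{-(d-1)}$ uniformly in $\nu,\si$. On $A(\psi^{-1}\gm)\times A(\psi^{-1}\beta)$ the support forces $|z|\approx|\zeta|$, and applying \rl{Lem::op_norm_AB} with trivial weights, its two quantities are each bounded by $\big(\int_{|w|<C\nu^{-1}|z|}|w|^{-q'(d-1)}\,dV(w)\big)^{1/q'}\lesssim(\nu^{-1}|z|)^{1-d/q}$, using $q>d$ (which follows from $q\geq 2d^2-1$) so that $q'(d-1)<d$. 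Since $|z|\leq 1$ and $1-d/q>0$, this yields $\|1_{A(\psi^{-1}\gm)}P_\nu^M 1_{A(\psi^{-1}\beta)}\|_{p\to p'}\lesssim\nu^{d/q-1}$, which is dominated by the right-hand side of \re{Car_ineq} since $|\gm|'\geq\nu^{-1}$ and $1<\mu<\tfrac{d}{d-1}$.

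The heart of the matter is $P_\nu^N$. In polar coordinates $z=s\xi$, $\zeta=t\eta$ one has $L^p(A(\psi^{-1}\beta))\cong L^p(\beta_\ast,t^{d-1}dt;L^p(S^{d-1}))$, and likewise for the target; \rl{Lem::op_norm_prod_space} then reduces the estimate to bounding the radial operator with kernel $n(s,t):=e^{-c\nu(\si-\tau)^2}\|N_{\nu\psi'(\si)}^{s,t}\|_{L^p(S^{d-1})\to L^{p'}(S^{d-1})}$ from $L^p(\beta_\ast,t^{d-1}dt)$ to $L^{p'}(\gm_\ast,s^{d-1}ds)$. I bound the sphere norm by decomposing the angular variable dyadically in the scale $\la$ of $|\sin\thh_{z\zeta}|$: the range $|\sin\thh|\lesssim\nu^{-1/\mu}$ is handled by the cutoff $\chi^{\nu^{-1/\mu}}$ through part (iii) of the previous proposition, and for each dyadic $\la\in[\nu^{-1/\mu},O(1)]$ the annulus $|\sin\thh|\approx\la$ by $\chi_\la$ through part (ii), giving $O(\log\nu)$ terms. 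Since the factors $(st)^{-(d-1)/2}$ cancel the polar Jacobians (up to a factor bounded on $|z|,|\zeta|\leq 1$), after the change of variables $\si=\log(1/s)$, $\tau=\log(1/t)$ the radial kernel for each $\la$ is $\lesssim\nu^{(\ast)}\la^{(\ast)}\,e^{-c\nu(\si-\tau)^2}(|\si-\tau|+\la^{(\ast)})^{-1}$ near the diagonal $\si\approx\tau$, with the off-diagonal tails $(s/t)^\rho,(t/s)^{1-\rho}$ of $W$ absorbed by the Gaussian. Now \rl{Lem::op_norm_AB} applies: its quantities $A$ (an $L^{q'}$ norm in $\tau\in\psi^{-1}\beta$) and $B$ (in $\si\in\psi^{-1}\gm$), computed from \rl{Lem::min_est} with $\rho\approx\nu$, contribute $\min\{\la^{(\ast)},|\beta|,\nu^{-1/2}\}^{1/q'}$ resp. $\min\{\la^{(\ast)},|\gm|,\nu^{-1/2}\}^{1/q'}$, and $(AB)^{1/2}$ produces the exponent $\tfrac1{2q'}$; bounding the $\beta$-factor by $\nu^{-1/(2q')}$ and the $\gm$-factor by $(|\gm|')^{1/(2q')}$, each $\la$ gives a bound of shape $\nu^{(\ast)}\la^{(\ast)}(|\gm|')^{1/(2q')}$.

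It remains to collect the powers of $\nu$ and $\la$ and sum over the $O(\log\nu)$ dyadic scales. The $\la$-sum is geometric and dominated by the endpoint $\la\approx\nu^{-1/\mu}$, the $O(\log\nu)$ loss being absorbed by a negligible power of $\nu$, and the surviving exponent must be checked to equal $\tfrac{d-1}{q}+\tfrac1\mu-\tfrac1{2\mu q'}$. This last step is the main obstacle, and it is exactly where the numerical hypotheses enter: $q\geq 2d^2-1$ forces $q>d$ (needed for $P_\nu^M$) and makes the net exponent of $\la$ after combining the sphere and radial estimates have the correct sign, so the dyadic sum collapses to its endpoint instead of diverging; while $1<\mu<\tfrac{d}{d-1}$ keeps the oscillatory gain $(N\la)^{-(d-2)/p'}$ supplied by \rl{Lem::oscill_est} in its admissible range and controls the remaining exponents.
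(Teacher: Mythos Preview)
Your overall architecture is exactly the paper's: split $P_\nu=Q_\nu+R_\nu$ via $L=M+N$, dispatch $Q_\nu$ by the pointwise bound and \rl{Lem::op_norm_AB}, and for $R_\nu$ take a dyadic partition $\{\chi^{\nu^{-1/\mu}},\chi_\la\}$ in the angular variable, feed in the spherical estimates of the previous proposition, then pass to a radial operator via \rl{Lem::op_norm_prod_space} and \rl{Lem::op_norm_AB}, with \rl{Lem::min_est} controlling the radial $L^{q'}$ integral. Two points in the execution, however, are not right as written.

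First, the sentence ``$\si,\tau$ lie in bounded intervals, so \dots\ the weight is dominated by $e^{-c\nu(\si-\tau)^2}$'' is unjustified: the proposition does not assume $\beta,\gm$ bounded (indeed the Corollary takes $\beta$ to be all of $\R$), and even for bounded intervals the constant in the Gaussian must be uniform in their location. The hypothesis only gives $\psi''(\si)\ge C_\del e^{-\del\si}=C_\del s^\del$, so the correct bound is $\Del_2\psi\ge C_\del s^\del\min\{1,(\tau-\si)^2\}$. The paper compensates for the degenerating constant with the factor $s^{\eta(d,q)}$, $\eta(d,q)=\tfrac12(1-d/q)>0$, that appears when you compute the weighted radial norm $(st)^{d/p'}(st)^{-(d-1)/2}$; choosing $\del\le 1-d/q$ makes $s^{\eta}(\nu s^\del)^{-1/(2q')}\lesssim\nu^{-1/(2q')}$ and, on the region $|\si-\tau|>1$, makes $s^\eta e^{-C_\del\nu s^\del}\lesssim\nu^{-T}$. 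Your ``off-diagonal tails absorbed by the Gaussian'' therefore needs this $s$-bookkeeping, and the region $|\si-\tau|>1$ really has to be treated separately.

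Second, and more consequential for the stated exponent: bounding the $\beta$-factor by $\nu^{-1/2}$ is too crude. The paper uses $\min\{\nu^{-1/2},\la,|\beta|\}\le\la$, so that after $(AB)^{1/2}$ one sums $\sum_\la \la^{-1-\frac{d}{2q}+\frac{1}{2q'}}$; the extra $\la^{1/(2q')}$ is what produces the $-\tfrac{1}{2\mu q'}$ in the final exponent when the sum collapses to $\la=\nu^{-1/\mu}$. If instead you bound by $\nu^{-1/2}$, the $\la$-sum is $\sum_\la\la^{-1-d/(2q)}$ and the resulting exponent of $\nu$ exceeds $\tfrac{d-1}{q}+\tfrac1\mu-\tfrac1{2\mu q'}$ (for $\mu$ near $d/(d-1)$ and $q\ge 2d^2-1$ one checks the discrepancy is strictly positive), so the inequality \re{Car_ineq} does not follow. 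The fix is simply to use $\la$ rather than $\nu^{-1/2}$ for the $\beta$-factor.
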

\begin{proof}
Write $P_\nu = Q_\nu + R_\nu$, where $Q_\nu$ (resp. $R_\nu$) comes from substituting $M_\nu$ (resp.$N_\nu$) for $L_\nu$ in the definition of $P_\nu$. Notice that $\nu = N+ \frac{d-1}{2} - \rho$ so $\nu \approx N$ when $\nu$ is large. We denote by $\Del_2 \psi (\si, \tau)$ the quantity $\psi(\tau) - \psi(\si) - \psi'(\si)(\tau -\si)$. Then we have 
\[
  |Q_\nu| = e^{-\nu \Del_2 \psi(\si, \tau)} |M_{\nu \psi'(\si)} | \lesssim |M_{\nu \psi'(\si)} | \lesssim  |z-\zeta|^{-(d-1)}.  
\]
Since $Q_\nu$ vanishes for $|z-\zeta| \geq N^{-1} \gtrsim \nu^{-1}$, we have 
\begin{align*}
  \| Q_\nu \|_{L^{q'}(B(0,1))}  
&= \left( \int_{|z-\zeta| < \nu^{-1} \cap B(0,1)}  |Q_\nu(z,\zeta)|^{q'} \, dV(\zeta) \right)^{\frac{1}{q'}}
  \\ &\lesssim \left( \int_{|z-\zeta|\lesssim \nu^{-1}\cap B(0,1) } |z-\zeta|^{-(d-1)q'} \, dV(\zeta) \right)^{\frac{1}{q'}}
\lesssim \nu^{\frac{d}{q}-1}.  
\end{align*}
It follows from \rl{Lem::op_norm_AB} that $Q_\nu$ is $L^p(B(0,1)) \to L^{p'}(B(0,1))$ bounded with norm 
\[
  \lesssim \nu^{\frac{d}{q} -1} = \nu^{\frac{d-1}{q}+ 
\frac{1}{q}-1} = \nu^{\frac{d-1}{q}- \frac{1}{q'}} \leq \nu^{\frac{d-1}{q}} (|\gm|')^{\frac{1}{q'}}  
 \leq \nu^{\frac{d-1}{q}+ \frac{1}{\mu} - \frac{1}{2\mu q'}} (|\gm|')^{\frac{1}{2q'}}. 
\]
Here we used $\nu^{-1} \leq  |\gm|' = \min \{ |\gm|, \nu^{-\yh} \}$. 
To finish the proof, it suffices to prove \re{Car_ineq} for $R_\nu$ in place of $P_\nu$. Choose a partition of unity on $S^{d-1} \times S^{d-1}$ consisting of functions $\{ \chi^{\nu^{-\frac{1}{\mu}}}, \chi_\la \}$, where $\la = 2^j \nu^{-\frac{1}{\mu}}$ for $0 \leq j \leq \log_2 (\nu^{\frac{1}{\mu}})$. Consider 
\[
  \left\| 1_{A(\psi^{-1} \gm)} (z) \chi_\la \left( \frac{z}{|z|}, \frac{\zeta}{|\zeta|} \right) R_\nu (z,\zeta) 1_{A(\psi^{-1} \beta)} (\zeta) \right\|_{p \to p'}, \quad 
  R_\nu (z,\zeta) = e^{-\nu \Del_2 \psi(\si, \tau)}  N_\nu (z,\zeta). 
\]
Let $r_\la(s,t)$ be the $L^p (S^{d-1}) \to L^{p'} (S^{d-1})$ norm of the kernel $(\chi_\la R_\nu)^{st}$.  
Let $T^\la$ be the operator defined by the kernel above. Then we can write
\begin{align*}
T^\la f (z) &= T^\la f(s,\xi)
= \int_{(\psi^{-1} \beta)_\ast} T^\la_{s,t} \left[ f(t, \cdot) \right] (\xi) t^{d-1} \, dt, 
\end{align*}
where $T^\la_{s,t} [f(t,\cdot)] (\xi) = \int_{S^{d-1}}(\chi_\la R_\nu)^{st} (\xi, \eta) f(t,\eta ) \, d \eta $. Regard $\R^d = S^{d-1} \times \R^d$. Applying \rl{Lem::op_norm_prod_space} and then \rl{Lem::op_norm_AB} with $u(s) = s^{-d}$ and $v(t) = t^{-d}$, we get
\begin{align*}
   \| T^\la \|_{p \to p'} 
   &\leq L^p (\beta_\ast, t^{d-1} \, dt) \to L^{p'} (\gm_\ast, s^{d-1} \, ds) 
 \: \text{norm of} \: r_\la(s,t)
\leq (AB)^{\yh}, 
\end{align*} 
where 
\begin{gather*}
  A = \sup_{s \in (\psi^{-1} \gm)_\ast } 
  \|e^{-\nu \Del_2 \psi(\si,\tau)} (st)^{d/p'} r_\la(s,t) \|_{L^{q'}\left((\psi^{-1} \beta)_\ast, \frac{dt}{t}\right) }; 
\\ 
B = \sup_{t \in (\psi^{-1} \beta)_\ast } 
\| e^{-\nu \Del_2 \psi(\si,\tau)} (st)^{d/p'} r_\la(s,t) \|_{L^{q'}\left( (\psi^{-1} \gm)_\ast, \frac{ds}{s} \right)}. 
\end{gather*}
Since $\la = 2^j \nu^{-\frac{1}{\mu}} \geq \nu^{-1}$, we have by \re{chi_la_Nst_nu_est},   
\begin{align*}
  r_\la(s,t) \lesssim \nu^{\frac{d}{2q} - \frac{1}{q}} \la^{-\frac{d}{2q}} (st)^{-\frac{d-1}{2}} K(s,t)
\end{align*}
where 
\[ 
  K(s,t) = \begin{cases} 
(| \si-\tau| + \la)^{-1} & \text{if $|\si - \tau|< 1$}; 
\\[5pt] 
e^{-\del |\si -\tau | }  & \text{if $|\si - \tau|> 1$}
\end{cases}
\]
and $\del = \dist (N-(d-1)/2, \Z)$. 
Since $\displaystyle \frac{1}{p} - \frac{1}{p'} = \frac{1}{q}$, we have $\displaystyle \frac{d}{p'} = \frac{d}{2q'} = \frac{d}{2} \left( 1- \frac{1}{q} \right)$ and 
\[
  \eta_{d,q} := \frac{d}{p'} -\frac{d-1}{2}= \yh \left(1-\frac{d}{q} \right)>0. 
\]
Hence 
\begin{equation} \label{A_est}  
   A \leq \nu^{\frac{d}{2q} - \frac{1}{q}} \la^{-\frac{d}{2q}} s^{\eta(d,q)} 
 \left( \int_{(\psi^{-1}\beta)_\ast} \left( e^{-\nu \Del_2 \psi (\si,\tau)} [K(s,t)] t^{\eta(d,q)} \right)^{q'} \frac{dt}{t} \right)^{\frac{1}{q'}}.  
\end{equation}
Convexity of $\psi$ implies that $\Del_2 \psi$ is positive and in fact $\geq k(\si)\min \{ 1, (\tau-\si)^2 \}$, where $k(\si)$ denotes any lower bound for $\psi''/2$ on the interval $(\si-1, \si +1)$. We can take $k(\si)$ to be of the form $C_\del e^{-   \del \si}$ for any given $\del>0$ such that 
\[
  \Del_2 \psi \geq C_\del e^{-\del \si} \min \{ 1, (\tau-\si)^2 \} = C_\del s^\del \min \{ 1, (\tau-\si)^2 \} .   
\] 

We estimate the integral in \re{A_est} by splitting into two parts: $\psi^{-1} \beta = \beta^1 \cup \beta^2$  \\ 
\textit{Region 1.} 
Denote $\beta^1 := \{ \tau \in \psi^{-1} \beta, \: |\si - \tau| > 1\}$. 
On $\beta^1_\ast$, we have $\Del_2 \psi \geq  C_\del s^\del$ and $K(s,t) = e^{-\del |\si - \tau|}$. Since $ 
\beta^1 \subset \R^+$, $e^{-\tau \eta(d,q)}$ is uniformly bounded on $\beta^1$.  
Using $t = e^{-\tau}$, the integral in \re{A_est} is bounded by 
\begin{align*} 
  e^{-C_\del \nu s^\del}  \left( \int_{\beta^1_\ast} \left( e^{-\del |\si -\tau| } t^{\eta(d,q)}\right)^{q'}  \frac{dt}{t} \right)^{\frac{1}{q'}} 
&= e^{-C_\del \nu s^\del} \left( \int_{\beta^1}  
\left( e^{-\del |\si -\tau| } e^{-\tau \eta(d,q)} \right)^{q'}\, d \tau \right)^{\frac{1}{q'}}  
  \\ &\lesssim e^{-C_\del \nu s^\del} 
\end{align*}
which can be made $\lesssim \nu^{-T}$ for any given $T$.
\\ 
\smallskip 
\textit{Region 2. } Denote $ \beta^2:= \{ \tau 
\in \psi^{-1} \beta, |\si - \tau| < 1 \}$. On $\beta^2_\ast$, we have $\Del_2 \psi \geq C_\del s^\del (\tau -\si)^2$ and $K(s,t) = (| \si-\tau| + \la)^{-1}$. Application of \rl{Lem::min_est} then gives 
\begin{align*}
    &s^{\eta(d,q)} \left( \int_{\beta^2_\ast} \left( e^{-C_\del \nu s^\del (\si-\tau)^2 } K(s,t) t^{\eta(d,q)} \right)^{q'} \frac{dt}{t} \right)^{\frac{1}{q'}} 
    \\ &\quad \leq s^{\yh(1-\frac{d}{q})} \left( \int_{\beta^2}  
\left[ e^{-C_\del \nu s^\del (\si-\tau)^2 } (| \si-\tau| + \la)^{-1} \, e^{-\tau \eta(d,q)} \right]^{q'} d \tau \right)^{\frac{1}{q'}}  
    \\ &\quad \lesssim s^{\yh(1-\frac{d}{q})} \la^{-1} \min \{ (\nu s^\del)^{-\yh}, \la,  |\beta^2| \}^{\frac{1}{q'}} 
    \\ &\quad \lesssim \la^{-1} \min \{ \nu^{-\yh}, \la, |\psi^{-1} \beta| \}^{\frac{1}{q'}} \lesssim \la^{-1} \min \{ \nu^{-\yh}, \la, |\beta| \}^{\frac{1}{q'}} , 
\end{align*}
provided that $\del \leq 1 - d/r$. Note that the last inequality is true since $\psi'$ is bounded away from $0$ and $\infty$. 
It follows from \re{A_est} that 
\[
  A \leq \nu^{\frac{d}{2q} -\frac{1}{q} } \la^{-\frac{d}{2q} -1} \min \{ \nu^{-\yh}, \la, |\beta| \}^{\frac{1}{q'}}.    
\]
Similarly, we can estimate $B$ with the same bound with $|\beta|$ replaced by $|\gm|$.   
Summing over $\la$ we obtain 
\begin{align*}
  \| 1_{A(\psi^{-1} \gm)} R_\nu 1_{A(\psi^{-1} \beta)} \|_{p \to p'} 
&\leq \| 1_{A(\psi^{-1} \gm)} \chi^{\nu^{\frac{1}{\mu}}} R_\nu  1_{A(\psi^{-1} \beta)} \|_{p \to p'} + \sum_{\substack{\la=2^j \nu^{-\frac{1}{\mu}} \\[1.5pt] 0 \leq j \leq 
\log_2 \nu}} \| 1_{A(\gm)} \chi_\la R_\nu  1_{A(\beta)} \|_{p \to p'} 
\end{align*}   
\begin{equation} \label{chi_la_Rnu_est}
 \sum_{\substack{\la=2^j \nu^{-\frac{1}{\mu}} \\[1.5pt] 0 \leq j \leq 
\log_2 \nu}} \| 1_{A(\psi^{-1} \gm)} \chi_\la R_\nu  1_{A(\psi^{-1} \beta)} \|_{p \to p'} 
\leq \nu^{\frac{d}{2q} -\frac{1}{q} } 
\sum_{\substack{\la=2^j \nu^{-\frac{1}{\mu}} \\[1.5pt] 0 \leq j \leq 
\log_2 \nu}} \la^{-\frac{d}{2q} -1} \left[ \min \{ \nu^{-\yh}, \la, |\beta| \} \cdot \min \{ \nu^{-\yh}, \la, |\gm| \} \right]^{\frac{1}{2q'}}.    
\end{equation}
Note that the power of $\la$ is always negative. 
We use 
\[ 
 \min \{ \nu^{-\yh}, \la, |\beta| \} \leq \la, \quad 
 \min \{ \nu^{-\yh}, \la, |\gm| \} \leq |\gm|',  
\] 
so that the above sum is bounded by 
\begin{align*}
(|\gm|')^{\frac{1}{2q'}} \sum_{\substack{\la=2^j \nu^{-\frac{1}{\mu}} \\[1.5pt] 0 \leq j \leq 
\log_2 \nu}} \la^{-1 - \frac{d}{2q} + \frac{1}{2q'}} 
&= (|\gm|')^{\frac{1}{2q'}} \sum_{j \geq 0} \left(2^j \nu^{-\frac{1}{\mu}} \right)^{-1 - \frac{d}{2q}+ \frac{1}{2q'}}
\\ &\lesssim (|\gm|')^{\frac{1}{2q'}} \nu^{\frac{1}{\mu} + \frac{d}{2\mu q}- \frac{1}{2\mu q'}}. 
\end{align*}
Hence \re{chi_la_Rnu_est} implies 
\begin{align*}
\sum_{\la} \| 1_{A(\psi^{-1} \gm)} \chi_\la R_\nu  1_{A(\psi^{-1} \beta)} \|_{p \to p'} 
&\leq \nu^{\frac{d}{2q} - \frac{1}{q}}  (|\gm|')^{\frac{1}{2q'}} \nu^{\frac{1}{\mu} + \frac{d}{2\mu q} - \frac{1}{2\mu q'}} 
\leq \nu^{\frac{d-1}{q}+ \frac{1}{\mu} - \frac{1}{2\mu q'}} (|\gm|')^{\frac{1}{2q'}}   
\end{align*}
where in the last step we used $\displaystyle \nu^{\frac{d}{2q} + \frac{d}{2\mu q}} \leq \nu^{\frac{d}{q}} $. 

We now estimate $ \| 1_{A(\psi^{-1}\gm)} \chi^{\nu^{-\frac{1}{\mu}}} R_\nu  1_{A(\psi^{-1} \beta)} \|_{p \to p'} $. Let $r_0(s,t)$ be the $L^p (S^{d-1}) \to L^{p'} (S^{d-1})$ norm of the kernel $(\chi^{\nu^{-\frac{1}{\mu}}} R_\nu)^{st}$. Then by \rl{Lem::op_norm_prod_space} and \rl{Lem::op_norm_AB} we have 
\[
  \| 1_{A(\psi^{-1}\gm)} \chi^{\nu^{-\frac{1}{\mu}}} R_\nu  1_{A(\psi^{-1} \beta)} \|_{p \to p'} \leq (AB)^\yh, 
\]
where
\begin{gather*}
  A = \sup_{s \in (\psi^{-1} \gm)_\ast } 
  \|e^{-\nu \Del_2 \psi(\si,\tau)} (st)^{d/p'} r_0(s,t) \|_{L^{q'}\left((\psi^{-1} \beta)_\ast, \frac{dt}{t}\right) }; 
\\ 
B = \sup_{t \in (\psi^{-1} \beta)_\ast } 
\| e^{-\nu \Del_2 \psi(\si,\tau)} (st)^{d/p'} r_0(s,t) \|_{L^{q'}\left( (\psi^{-1} \gm)_\ast, \frac{ds}{s} \right)}. 
\end{gather*}
Using \rl{chi^la_Nst^nu_est}, we obtain  
\[
  r_0(s,t) \lesssim  \nu^{\frac{d}{2 \mu q} - \frac{1}{\mu q}} \la^{-\frac{d}{2q} + (d-1) - \mu(d-2) } (st)^{-\frac{d-1}{2}} \times 
  \begin{cases} 
(| \si-\tau| + \la^\mu)^{-1} & \text{if $|\si - \tau|< 1$}; 
\\[5pt] 
e^{-\del |\si -\tau | }  & \text{if $|\si - \tau|> 1$}. 
\end{cases} 
\]
Estimating $A$ and $B$ in the same way as in \re{A_est}, we have
\begin{align*}
  \| 1_{A(\gm)} \chi^{\nu^{-1}} R_\nu  1_{A(\beta)} \|_{p \to p'}
&\lesssim \nu^{ \frac{d}{2 \mu q} - \frac{1}{ \mu q}} 
 \la^{-\frac{d}{2q} +(d-1)- \mu (d-2) -\mu} \left[ \min \{ \nu^{-\yh}, \la^\mu, |\beta| \} \cdot \min \{ \nu^{-\yh}, \la^\mu, |\gm| \} \right]^{\frac{1}{2q'}}
\\ &\lesssim  \nu^{ \frac{d}{2 \mu q} - \frac{1}{\mu q}} 
 \la^{-\frac{d}{2q} - (\mu-1 )(d-1)} \left[ \min \{ \nu^{-\yh}, \la^\mu, |\beta| \} \cdot \min \{ \nu^{-\yh}, \la^\mu, |\gm| \} \right]^{\frac{1}{2q'}}
 \\ &\lesssim \nu^{ \frac{d}{2q} - \frac{1}{q}} 
 \la^{-\frac{d}{2q} - 1} \left[ \min \{ \nu^{-\yh}, \la, |\beta| \} \cdot \min \{ \nu^{-\yh}, \la, |\gm| \} \right]^{\frac{1}{2q'}}
\end{align*} 
where we take $\mu\geq 1$ to satisfy 
\begin{equation} \label{mu_cond} 
  (\mu-1)(d-1) < 1, \quad \text{or} \quad \mu < \frac{d}{d-1}.    
\end{equation}
Same estimate as for \re{chi_la_Rnu_est} then yields 
\[ 
   \| 1_{A( \psi^{-1} \gm)} \chi^{\nu^{-1}} R_\nu  1_{A(\psi^{-1} \beta)} \|_{p \to p'} \lesssim \nu^{\frac{d-1}{q}+ \frac{1}{\mu} - \frac{1}{2\mu q'}} (|\gm|')^{\frac{1}{2q'}}. \qedhere 
\]
\end{proof}

\begin{cor} \label{Cor::Car_ineq}    
  Let $q \geq 2d^2 -1$. Then $ \| 1_{A(\psi^{-1} \gm)} P_\nu 1_{B(0,1)} \|_{p \to p'} \leq C_{d,p} \nu \min 
  \{ |\gm|,\nu^{-\yh}\} $. 
\end{cor}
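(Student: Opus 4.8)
The plan is to obtain Corollary~\ref{Cor::Car_ineq} by specializing Proposition~\ref{Prop::Car_ineq} to the case in which the source annulus $A(\psi^{-1}\beta)$ is all of $B(0,1)$, and then optimizing the free parameter $\mu$; the hypothesis $q\geq 2d^2-1$ turns out to be exactly what forces the resulting power of $\nu$ down to $1$.

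Concretely, I would apply Proposition~\ref{Prop::Car_ineq} with $\beta=\psi(\R^+)$, so that $\psi^{-1}\beta=\R^+$ and $A(\psi^{-1}\beta)=\{z:0<|z|<1\}$, which is $B(0,1)$ up to a null set. Since $\psi$ is increasing with $\psi'\geq C^{-1}>0$, this $\beta$ is a genuine interval with $|\beta|=\infty\geq\nu^{-1}$, so the only hypothesis of Proposition~\ref{Prop::Car_ineq} left to verify is $|\gm|\geq\nu^{-1}$; we may assume it (if $|\gm|<\nu^{-1}$, enlarge $\gm$ to length $\nu^{-1}$, and in the intended application only such intervals occur). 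The proof of Proposition~\ref{Prop::Car_ineq} is unaffected by $\beta$ being unbounded: the only role of $|\beta|$ there is in the splitting $\psi^{-1}\beta=\beta^1\cup\beta^2$, and over the now half-infinite ``Region~1'' $\beta^1=\{\tau:|\si-\tau|>1\}$ the relevant integral is still dominated by $e^{-C_\del\nu s^\del}\int_\R e^{-\del q'|\si-\tau|}\,d\tau\lesssim\nu^{-T}$, while $|\beta^2|\leq 2$ exactly as before. Proposition~\ref{Prop::Car_ineq} therefore gives, for every $\mu\in(1,\frac{d}{d-1})$,
\[
  \|1_{A(\psi^{-1}\gm)}P_\nu 1_{B(0,1)}\|_{p\to p'}\leq C_{d,p}\,\nu^{\frac{d-1}{q}+\frac1\mu-\frac1{2\mu q'}}(|\gm|')^{\frac1{2q'}},\qquad |\gm|':=\min\{|\gm|,\nu^{-\yh}\}.
\]

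To finish, since $|\gm|'\geq\nu^{-1}$ and $\frac1{2q'}-1<0$, I would bound $(|\gm|')^{\frac1{2q'}}=(|\gm|')^{\frac1{2q'}-1}|\gm|'\leq\nu^{1-\frac1{2q'}}|\gm|'$, which converts the estimate into $\|1_{A(\psi^{-1}\gm)}P_\nu 1_{B(0,1)}\|_{p\to p'}\leq C_{d,p}\,\nu^{E(\mu)}|\gm|'$ with $E(\mu)=\frac{d-1}{q}+\frac1\mu\bigl(1-\frac1{2q'}\bigr)-\frac1{2q'}+1$. Because $1-\frac1{2q'}>0$, $E(\mu)$ decreases as $\mu\uparrow\frac{d}{d-1}$, so it suffices that $\frac{d-1}{q}+\frac{d-1}{d}\bigl(1-\frac1{2q'}\bigr)-\frac1{2q'}\leq 0$; substituting $\frac1{q'}=1-\frac1q$ and clearing denominators reduces this to $2d^2-q\leq 1$, i.e. $q\geq 2d^2-1$, precisely our hypothesis. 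Choosing $\mu$ close enough to $\frac{d}{d-1}$ then gives $E(\mu)\leq 1$ and hence $\|1_{A(\psi^{-1}\gm)}P_\nu 1_{B(0,1)}\|_{p\to p'}\leq C_{d,p}\,\nu|\gm|'=C_{d,p}\,\nu\min\{|\gm|,\nu^{-\yh}\}$. The only genuinely delicate point will be this last arithmetic --- checking that $q=2d^2-1$ is exactly the break-even value (with the endpoint $q=2d^2-1$ forcing $\mu$ to the boundary $\frac{d}{d-1}$, so that strictly one wants $q>2d^2-1$ or absorbs the borderline into constants); all the analysis --- the Bochner--Martinelli Taylor remainder $I^N$, the ultraspherical and oscillatory-integral bounds on $S^{d-1}$, the splitting $L_\nu=M_\nu+N_\nu$, and the convexity lower bound $\Del_2\psi\gtrsim s^\del\min\{1,(\tau-\si)^2\}$ --- is already contained in Proposition~\ref{Prop::Car_ineq}, so nothing further is needed beyond this specialization.
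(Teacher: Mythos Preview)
Your proposal is correct and follows essentially the same route as the paper: specialize Proposition~\ref{Prop::Car_ineq} to $\beta=\infty$, use $|\gm|'\geq\nu^{-1}$ to convert $(|\gm|')^{\frac{1}{2q'}}$ into $\nu^{1-\frac{1}{2q'}}|\gm|'$, and then push $\mu\uparrow\frac{d}{d-1}$ to obtain the threshold $q\geq 2d^2-1$. Your arithmetic matches the paper's simplification $\frac{d-1}{q}+\frac{1}{\mu}-\frac{1}{2\mu q'}-\frac{1}{2q'}=\frac{d}{q}-(1-\frac{1}{2q'})(1-\frac{1}{\mu})$, and your remarks on the unbounded $\beta^1$ and on the endpoint $\mu=\frac{d}{d-1}$ are points the paper leaves implicit.
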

\begin{proof}
We denote $|\gm|' = \min\{ 
|\gm|, \nu^{-\yh} \}$. Using \rp{Prop::Car_ineq} with $\beta = \infty$ 
($|\beta|' = \nu^{-\yh}$), we get 
\begin{align*} 
   \| 1_{A(\psi^{-1} \gm)} P_\nu \|_{p \to p'} 
&\lesssim  \nu^{\frac{d-1}{q}+ \frac{1}{\mu} - \frac{1}{2\mu q'}} (|\gm|')^{\frac{1}{2q'}}   
\\ &= (\nu |\gm|' ) \left(  \nu^{\frac{d-1}{q}+ \frac{1}{\mu} - \frac{1}{2 \mu q' } -1}  (|\gm|')^{\frac{1}{2q'} -1 } \right)
\\ &\leq (\nu |\gm|' ) \left(  \nu^{\frac{d-1}{q}+ \frac{1}{\mu} - \frac{1}{2 \mu q' } - \frac{1}{2q'}}  \right)
\end{align*} 
where we used that $|\gm|' \geq \nu^{-1}$. 
Now, 
\begin{align*}
 \frac{d-1}{q}+ \frac{1}{\mu} - \frac{1}{2 \mu q' } - \frac{1}{2q'}
&= \frac{d}{q} + \left( 1 - \frac{1}{q} - \frac{1}{2q'} - \frac{1}{2q'} \right) - \left(1-\frac{1}{\mu} \right) + \left( \frac{1}{2q'} - \frac{1}{2\mu q'} \right)  
\\ &= \frac{d}{q} - \left( 1 - \frac{1}{2q'} \right) \left( 1- \frac{1}{\mu} \right). 
\end{align*}
The above expression is $\leq 0$ if (taking into account \re{mu_cond})
\[
 q \geq \frac{2d}{ 1- \mu^{-1}} -1 = 2d^2 -1.  \qedhere 
\]
\end{proof}
The following lemma of Wolff is the key to deal with the blow-up of the constant in Carleman inequality. 
\begin{lemma} \label{Lem::measure}  
  Suppose $\mu$ is a positive measure with faster than exponential decay 
\begin{equation} \label{meas_decay}  
    \lim_{T \to \infty} \frac{1}{T} \log \mu \{ x: |x| > T \}  = -\infty.   
\end{equation}
Define $\mu_k$ for $k \in \R$ by $d \mu_k (x) = 
e^{kx} d \mu(x)$. Suppose $N \in \R^+$. Then there are disjoint intervals $I_j \subset \R$ and numbers $k_j \in [N,2N]$ such that (with $C$ a positive universal constant) 
\begin{enumerate}[(i)]
\item 
$\mu_{k_j} (I_j) \geq \yh \| \mu_{k_j} \|$; 
\item $\sum |I_j|^{-1} \geq CN$. 
\end{enumerate} 
\end{lemma}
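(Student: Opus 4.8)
The plan is to follow Wolff's original argument \cite{Tom90}, whose engine is the convexity of the logarithmic moment generating function of $\mu$. Set $\phi(k):=\log\|\mu_k\|=\log\int_{\R}e^{kx}\,d\mu(x)$. The hypothesis \re{meas_decay} forces $\mu$ to be a finite measure and makes $\int e^{kx}\,d\mu$ finite for every $k\in\R$, so $\phi\colon\R\to\R$ is well defined; it is convex by H\"older's inequality, in fact smooth, with $\phi'(k)=\int x\,dp_k$ (the barycenter of $p_k$) and $\phi''(k)=\operatorname{Var}(p_k)\geq 0$, where $p_k:=\|\mu_k\|^{-1}\mu_k$ is the associated probability measure. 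In particular $\phi'$ is non-decreasing. I will also use the stochastic monotonicity $p_k\preceq p_{k'}$ for $k<k'$ (the likelihood ratio $dp_{k'}/dp_k\propto e^{(k'-k)x}$ is increasing), which makes every quantile of $p_k$ non-decreasing in $k$.

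The core quantitative step produces many $k\in[N,2N]$ at which $p_k$ is well concentrated. Put $L:=\phi'(2N)-\phi'(N)=\int_N^{2N}\phi''(k)\,dk\geq 0$. By Markov's inequality the set $G:=\{k\in[N,2N]:\phi''(k)\leq 2L/N\}$ has Lebesgue measure $\geq N/2$. Set $\delta_0:=4\sqrt{2L/N}$. For $k\in G$, Chebyshev's inequality applied to $p_k$ (whose variance is $\leq 2L/N=(\delta_0/4)^2$) shows that the interval
\[
 I(k):=\bigl[\phi'(k)-\tfrac{\delta_0}{2},\ \phi'(k)+\tfrac{\delta_0}{2}\bigr]
\]
carries $\mu_k$-mass $\geq\tfrac34\|\mu_k\|$; it has length exactly $\delta_0$ and is centred at $\phi'(k)$. (If $L$ is so small that $\delta_0\lesssim N^{-1}$ — in particular if $L=0$, where $\phi$ is affine and each $p_k$ is a point mass — then a single $I(k)$ with $k\in G$ already satisfies $|I(k)|^{-1}\gtrsim N$ and $\mu_k(I(k))\geq\tfrac12\|\mu_k\|$, so this regime is disposed of at once; assume henceforth $L\gtrsim N^{-1}$.)

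It remains to extract from $\{I(k):k\in G\}$ a pairwise disjoint subfamily $I(k_1),\dots,I(k_J)$ with $k_1,\dots,k_J\in[N,2N]$ and $J\gtrsim L/\delta_0$. Granting this, since each $|I(k_j)|=\delta_0$,
\[
 \sum_{j=1}^{J}|I(k_j)|^{-1}=J\,\delta_0^{-1}\ \gtrsim\ L\,\delta_0^{-2}=\frac{N}{32},
\]
which is conclusion (ii); conclusion (i) holds by construction. The disjoint subfamily is produced by a left-to-right greedy selection: let $k_1:=\inf G$, and, having chosen $k_j$, let $k_{j+1}$ be the smallest $k\in G$ with $\phi'(k)\geq\phi'(k_j)+\delta_0$, stopping when no such $k$ remains. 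Because $\phi'$ is non-decreasing, the left endpoint of $I(k_{j+1})$ is $\geq\phi'(k_j)+\delta_0/2=$ right endpoint of $I(k_j)$, and inductively $I(k_1),\dots,I(k_J)$ are ordered left to right and hence pairwise disjoint up to their endpoints (a harmless shrink makes them genuinely disjoint).

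The step I expect to be the main obstacle is the lower bound $J\gtrsim L/\delta_0$ on the length of this greedy sequence. A measure estimate for $G$ by itself does not suffice, since for nearby $k$ the intervals $I(k)$ (fixed length $\delta_0$, centres $\phi'(k)$ that may barely move) overlap heavily; one has to prevent $\phi'$ from being essentially constant over a large portion of $G$, i.e. one must show that the selected values $\phi'(k_1)<\phi'(k_2)<\cdots$ actually sweep out a $\phi'$-interval of length $\asymp L$. This is where convexity (and the stochastic monotonicity of $p_k$) must be used a second time, together with a careful accounting of the $k\notin G$ stretches where the selection has to skip ahead. A pervasive minor nuisance, to be carried throughout, is that when $\mu_k$ has atoms the quantiles of $p_k$ are only right-continuous, so "shortest half-mass interval" and the various endpoint comparisons need the obvious $\varepsilon$-room; this does not affect the structure of the argument. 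Full details of this measure-theoretic lemma are in Wolff \cite{Tom90}.
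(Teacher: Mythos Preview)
The paper itself does not prove this lemma; it is stated and attributed to Wolff \cite{Tom90} without argument, so there is no in-paper proof to compare against. That said, your sketch is not merely incomplete at the point you flag --- the fixed-length strategy is actually unsalvageable as written, and the bound $J\gtrsim L/\delta_0$ you hope to supply is \emph{false} in general.

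Take $\mu=\delta_0+e^{-3N/2}\delta_1$ (trivially faster-than-exponential decay). Then $\phi(k)=\log(1+e^{k-3N/2})$, $\phi'(k)=\sigma(k-3N/2)$ with $\sigma$ the logistic function, and $\phi''(k)=\sigma(1-\sigma)$. One computes $L=\phi'(2N)-\phi'(N)\approx 1$, hence $\delta_0\approx 4\sqrt{2/N}$, while the good set $G=\{k:\phi''(k)\le 2L/N\}$ is $[N,2N]$ with a gap of width $O(\log N)$ removed around $k=3N/2$. On $G$ the centres $\phi'(k)$ lie in $[0,2/N]\cup[1-2/N,1]$; since $2/N\ll\delta_0$, any $\delta_0$-separated selection from $\phi'(G)$ has at most two points, so $J\le 2$ for \emph{any} selection rule, and $\sum|I_j|^{-1}\le 2/\delta_0\approx\sqrt{N/8}$, not $\gtrsim N$. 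The lemma is of course true here (a single arbitrarily short interval around $0$ with $k_1=N$ works), but your scheme cannot see it because $\delta_0$ is tied to the \emph{global} average $2L/N$ rather than the local variance $\phi''(k)$, which on most of $G$ is exponentially small.

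The missing idea is that the half-mass intervals must have $k$-dependent lengths comparable to $\sqrt{\phi''(k)}$ (equivalently, one works with genuine quantiles $a(k),b(k)$ of $p_k$, which are monotone in $k$ by the likelihood-ratio order you mention, and with $b(k)-a(k)$ controlled by the local spread). Wolff's argument then does a selection/covering in which the accounting $\sum|I_{k_j}|^{-1}\gtrsim N$ comes from an integral inequality over $[N,2N]$ relating $|I_k|^{-1}$ to the increments of the monotone functions $a,b,\phi'$ --- not from counting how many fixed-size steps fit inside $\phi'(G)$. Your identification of the convexity framework and of Chebyshev for concentration is correct; what needs to change is abandoning the single scale $\delta_0$ in favour of the variable scale $\sqrt{\phi''(k)}$ before attempting the selection.
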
 
\begin{rem}
 Condition \re{meas_decay} means that for any constant $C>0$, 
 \[ 
\lim_{T \to \infty} \frac{1}{CT} \log \mu \{ x: |x| > T \}  = -\infty.
\] 
Taking exponential on both sides we get 
\[
  \lim_{T \to \infty} \mu \{ x: |x| > T \}^{\frac{1}{CT}} = 0. 
\]
In other words, for each $M>0$, we have $\mu \{ x: |x| > T \} \leq M^{-CT}$ for all $T>T(M)$. This explains the meaning of ``faster than exponential decay". 
\end{rem}

We now extend formula \re{Pnu_formula_CCinfty} to all functions $f$ vanishing to infinite order at the origin.  
\begin{prop} \label{Prop::int_formula_W1p}  
 Let $f \in W^{1,p}(B(0,1))$ with $\supp f \in B(0,1)$. Suppose that $\| f \|_{L^{p}(B(0,r))}$ and $\| \db f \|_{L^{p}(B(0,r))}$ vanish faster than any powers of $r$ as $r \to 0$. 
Then the following formula holds
\begin{equation} \label{int_formula_W1p}  
  e^{\nu \psi (\si)} f (z) = \int_{\C^n} P_\nu (z,\zeta) e^{\nu \psi(\tau)} \we \db f(\zeta),   
\end{equation}
where both sides of the equation are in $L^p(B(0,1))$. 
\end{prop}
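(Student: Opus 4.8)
The plan is to obtain \re{int_formula_W1p} from the already-established identity \re{Pnu_formula_CCinfty} for test functions by a density argument, using the $L^p\to L^{p'}$ boundedness of $P_\nu$ that follows from \rc{Cor::Car_ineq}.

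Before approximating, I would record two facts. Since $\psi'$ is bounded above, $\psi(\si)\le\psi(0^+)+C\si$, so the weight satisfies $e^{\nu\psi(\si(z))}\ls_\nu|z|^{-C\nu}$ on $B(0,1)$, and likewise for $e^{\nu\psi(\tau(\zeta))}$. Splitting $B(0,1)$ into dyadic shells and using that $\|f\|_{L^p(B(0,r))}$ and $\|\db f\|_{L^p(B(0,r))}$ decay faster than every power of $r$, this shows $e^{\nu\psi(\si)}f$ and $e^{\nu\psi(\tau)}\db f$ both lie in $L^p(B(0,1))$. On the other hand, taking $\gamma$ in \rc{Cor::Car_ineq} to be an interval with $A(\psi^{-1}\gamma)\supseteq B(0,1)$ gives $\|P_\nu 1_{B(0,1)}\|_{p\to p'}\le C_{d,p}\nu^{\yh}<\infty$; since $p'\ge p$ and $|B(0,1)|<\infty$, the right side of \re{int_formula_W1p} therefore belongs to $L^{p'}(B(0,1))\seq L^p(B(0,1))$, and so does the left side. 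Hence both sides of \re{int_formula_W1p} are well-defined elements of $L^p(B(0,1))$, and it remains to prove they agree.

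For the approximation I would fix a radial cutoff $\eta_\e\in C^\infty$ with $\eta_\e\equiv0$ on $B(0,\e)$, $\eta_\e\equiv1$ off $B(0,2\e)$, $0\le\eta_\e\le1$ and $|\na\eta_\e|\ls\e^{-1}$. Since $\supp f$ is compactly supported in $B(0,1)$, the product $\eta_\e f$ lies in $W^{1,p}$ with support in a compact annulus in $\C^n\sm\{0\}$; a mollification $f_{\e,\del}=(\eta_\e f)*\phi_\del$ is then in $C^\infty_c(\C^n\sm\{0\})$ for $\del$ small (depending on $\e$), and $f_{\e,\del}\to\eta_\e f$ in $W^{1,p}$ as $\del\to0$, with supports inside one fixed compact subset of $\C^n\sm\{0\}$. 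Applying \re{Pnu_formula_CCinfty} to $u=f_{\e,\del}$ and letting $\del\to0$: on that fixed compact set the weights $e^{\nu\psi(\si)}$, $e^{\nu\psi(\tau)}$ are bounded, so $e^{\nu\psi(\tau)}\db f_{\e,\del}\to e^{\nu\psi(\tau)}\db(\eta_\e f)$ and $e^{\nu\psi(\si)}f_{\e,\del}\to e^{\nu\psi(\si)}\eta_\e f$ in $L^p(B(0,1))$; boundedness of $P_\nu$ then yields $e^{\nu\psi(\si)}\eta_\e f=\int P_\nu\, e^{\nu\psi(\tau)}\we\db(\eta_\e f)$ in $L^p(B(0,1))$. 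Letting $\e\to0$, I would split $\db(\eta_\e f)=\eta_\e\db f+(\db\eta_\e)f$; the first term converges to $\db f$ against the weight by dominated convergence (using $e^{\nu\psi(\tau)}\db f\in L^p$), while the cutoff error, supported in $\{\e\le|z|\le2\e\}$, is bounded by $\ls\e^{-1}\e^{-C\nu}\|f\|_{L^p(B(0,2\e))}$, which tends to $0$ by the super-polynomial vanishing of $f$ at the origin. Another application of the boundedness of $P_\nu$ and of dominated convergence on the left (using $e^{\nu\psi(\si)}f\in L^p$) produces \re{int_formula_W1p}.

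The delicate point is exactly this last estimate of the commutator term $(\db\eta_\e)f$: it inherits the factor $\e^{-1}$ from $|\na\eta_\e|$ and must be measured against the weight $e^{\nu\psi(\tau)}$, which on the shell $|z|\approx\e$ is of size $\e^{-C\nu}$, so it is only the hypothesis that $\|f\|_{L^p(B(0,r))}$ decays \emph{faster than every power} of $r$ (not merely to some fixed finite order) that forces the error to zero for the given $\nu$. The other thing to be careful about is the bookkeeping of the two limits---taking $\del\to0$ first, with everything supported in a fixed compact annulus where the weights are harmless, and only then $\e\to0$, where one switches to the integrability of the weighted functions near the origin.
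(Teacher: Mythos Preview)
Your proposal is correct and follows essentially the same approach as the paper's proof: both arguments first reduce to functions supported away from the origin via a radial cutoff $\eta_\e$ (the paper calls it $\chi_\e$), pass to the mollification limit on a fixed compact annulus where the weights are bounded, and then send $\e\to0$ by splitting $\db(\eta_\e f)=\eta_\e\db f+(\db\eta_\e)f$ and killing the commutator with the super-polynomial vanishing of $\|f\|_{L^p(B(0,r))}$. The only minor difference is that in the mollification step the paper estimates the kernel $P_\nu$ directly via the Bochner--Martinelli bound and Young's inequality, whereas you invoke the $L^p\to L^{p'}$ operator bound from \rc{Cor::Car_ineq} throughout; your use of the boundedness is slightly more streamlined and equally valid.
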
 
\begin{proof}
First, suppose $f$ is supported away from $0$. Let $\psi_\ve$ be the standard mollifier. Then for $\ve>0$ sufficiently small, $\psi_\ve \ast f \in C^\infty_c (B(0,1)\sm \{0 \})$ and thus 
\begin{equation} \label{Pnu_formula_psi_ve_ast_f}   
  e^{\nu \psi (\si)} \psi_\ve \ast f (z) = \int_{\C^n} P_\nu(z,\zeta) e^{\nu \psi(\tau)} \we \db (\psi_\ve \ast f) 
  = \int_{\C^n} P_\nu(z,\zeta) e^{\nu \psi(\tau)} \we  
(\psi_\ve \ast \db f).  
\end{equation} 
Recall that 
\[
  P_\nu(z,\zeta) = e^{-\nu \Del_2 \psi (\si,\tau)} L_\nu (z,\zeta) = e^{-\nu \Del_2 \psi (\si,\tau)} (|z| |\zeta|)^{-\frac{d-1}{2} + \rho } I^N (z,\zeta), 
\] 
where $\Del_2 \psi(\si,\tau) = \psi(\tau) - \psi(\si) - \psi'(\si) (\tau - \si)$. Since $\psi_\ve \ast \db f \in C^\infty_c ( \R^{2n} \sm \{0 \}) $ and $\psi' \leq C$, it follows that 
$e^{\nu \psi (\tau) - \nu \Del_2 \psi(\si,\tau)} 
(|z| |\zeta|)^{-\frac{d-1}{2} +\rho}$ is uniformly bounded on $\supp 
(\psi_\ve \ast \db f)$. 
Let $\ve \to 0$ on both sides of \re{Pnu_formula_psi_ve_ast_f}. On the left we have $\| \psi_\ve f - f \|_{L^p(\R^{2n})} \to 0$. Now, 
\[
  I^N (z,\zeta) =|z|^{-N} |\zeta|^{N+d-1} \left( B(z,\zeta) - P^{N-1}(z,\zeta) \right).  
\]
Let $g_\ve = \db f \ast \psi_\ve - \db f$. Then 
$g_\ve$ is $L^p$ with compact support in $\R^{2n} \sm \{0 \}$ and $\| g_\ve \|_{L^p(\R^{2n})} \to 0$. Since $|B_j(z,\zeta)| = \left| \frac{\ov{\zeta_j -z_j}}{|\zeta-z|^{2n}} \right| \lesssim \frac{1}{|\zeta-z|^{2n-1}}$, we get 
\begin{equation} \label{P_N-1_int_norm}
 \left\| \int P_\nu(\cdot,\zeta) e^{\nu \psi(\tau)} \we g_\ve
\right\|_{L^p(B(0,1))} \lesssim 
\left\| \int \frac{1}{|\zeta-\cdot|^{2n-1}} |g_\ve(\zeta)|   \right\|_{L^p(B(0,1))} + \left\| \int |P^{N-1}(\zeta,z)| |g_\ve(\zeta)|   \right\|_{L^p(B(0,1))}. 
\end{equation}

For the first integral on the right, we can write it as $\left\| \frac{1}{|\zeta - \cdot |^{2n-1}} \ast g_\ve \right\|_{L^p(B(0,1))}$ and apply Young's inequality to obtain 
\[
  \left\| \frac{1}{|\zeta - \cdot |^{2n-1}} \ast g_\ve \right\|_{L^p(B(0,1))}
 \lesssim \left\| \frac{1}{|\zeta|^{2n-1}}  \right\|_{L^1(B(0,R))} \| g_\ve \|_{L^p (\R^{2n})} 
\longrightarrow 0, \quad \text{as $\ve \to 0$},  
\]
where $B(0,R)$ is some sufficiently large ball. 

For the last term in \re{P_N-1_int_norm}, we note that since $P^{N-1}(\zeta,\cdot)$ is the degree $N-1$ polynomial of $ \frac{\ov{\zeta_j -z_j}}{|z - \zeta|^{2n} }$ at $z =0$, it is bounded on the support of $g_\ve$ and the bound is uniform for all $\ve$ sufficiently small. Thus 
\[ 
\left\| \int |P^{N-1}(\zeta,z)| |g_\ve(\zeta)|   \right\|_{L^p(B(0,1))} \lesssim \| g_\ve \|_{L^1(\R^{2n})}  \longrightarrow 0, \quad \text{as $\ve \to 0$.}  
\]
This finishes the proof for the case $0 \notin \supp f$. 

Now, we show that formula \re{int_formula_W1p} still holds if $f \in W^{1,p}(B(0,1))$ and $0 \in \supp f \subset B(0,1)$. Take a smooth cutoff function $\chi_\ve$ such that $\chi_\ve(z) \equiv 0$ on $|z| < \ve$ and $\chi_\ve(z) \equiv 1$ on $|z| > 2 \ve$. Then we have 
\begin{equation} \label{chi_ve_f_int_formula} 
   e^{\nu \psi(\si)} \chi_\ve f(z) 
  = \int P_\nu (z,\zeta) e^{\nu \psi(\tau)} \we  \db (\chi_\ve f) .    
\end{equation}
Since $0 < \frac{1}{C} < \psi' \leq C$, we have $\psi (a) \leq \psi(b) \leq C' b$ for any $a<b$.  
First, we show that $ e^{\nu \psi(\si)} \chi_\ve f$ converges to $e^{\nu \psi(\si)} f$ in $L^p(\R^d)$. By the dominated convergence theorem it suffices to show $e^{\nu \psi(\si)} f \in L^p(\R^d)$. On each annulus $|z| \in (2^{-j}, 2^{-(j-1)}) $, $\psi(\log (|z|^{-1})) \leq \psi (\log (2^j)) \leq C' \log 2^j$. Thus
\begin{align*}
  \int_{B(0,1)} e^{p \nu \psi(\si)} |f|^p 
&= \sum_{j\geq 0} \int_{ 2^{-j} \leq |z| \leq 2^{-(j-1)}} e^{p \nu \psi(\log (|z|^{-1}))} |f(z)|^p
\\ &\leq \sum_{j\geq 0} 2^{j C' p \nu} \int_{ 2^{-j} \leq |z| \leq 2^{-(j-1)}}  |f(z)|^p 
\\ &\leq C_N \sum_{j\geq 0} 2^{j C' p \nu} 2^{-jN} <\infty
\end{align*}
where in the last step we use the $L^p$ infinite order vanishing of $f$ to choose $N > C' p \nu$ to make the sum converge.  

To finish the proof we show that 
\[
  g_\ve = \int P_\nu (\cdot,\zeta) e^{\nu \psi(\tau)} \we  \db (\chi_\ve f) \quad \text{converges to }  
  g = \int P_\nu (\cdot,\zeta) e^{\nu \psi(\tau)} \we  \db f \quad \text{in $L^p(B(0,1))$}.  
\] 
By the same proof as above and using the $L^p$ infinite order vanishing of $\db f$ at $0$, we have $e^{\nu \psi(\tau)} \db f \in L^p(\R^d)$. 
Write 
\[
 g_\ve (z) 
  = \int_{\R^d} P_\nu (z,\zeta) e^{\nu \psi(\tau)} \we 
(\db \chi_\ve) f +  \int_{\R^d} P_\nu (z,\zeta) e^{\nu \psi(\tau)} \we \chi_\ve \left( \db f \right):= g_\ve^1(z) + g_\ve^2(z). 
\]
By \rp{Prop::Car_ineq} we have 
\[
  \| g^1_\ve \|_{L^{p'}(B(0,1))} 
  \leq C_\nu \| e^{\nu \psi(\tau)} (\db \chi_\ve) f \|_{L^p(B(0,1))}.   
\]
Using $\supp \db \chi_\ve \subset (\ve, 2 \ve)$, and infinite order vanishing of $f$, we get 
\[
 \| g^1_\ve \|_{L^{p'}(B(0,1))} \lesssim  \ve^{-C\nu} \| f \|_{L^p(B(0,2\ve))} \to 0 \quad \text{as $\ve \to 0$}. 
\]
We now show that $g^2_\ve \to g$ in $L^p(B(0,1))$. By \rp{Prop::Car_ineq}, we have 
\[ 
\left\| \int P_\nu (\cdot,\zeta) e^{\nu \psi(\tau)} \we  (\chi_\ve \db f - \db f) \right\|_{L^{p'}(B(0,1))} \leq C_\nu \| e^{\nu \psi(\si)}  (\chi_\ve \db f  - \db f) \|_{L^{p}(\R^{2n})}. 
\]
Since $e^{\nu \psi(\si)} \db f \in L^p(\R^d)$, the last expression converges to $0$ when $\ve \to 0$ by the dominated convergence theorem. This shows that $g^2_\ve \to g$ in $L^p(B(0,1))$. Together with $g_\ve^1 \to 0$, this implies that $g_\ve \to g$ in $L^p(B(0,1))$.  \qedhere
\end{proof}
We are now ready to prove the unique continuation property for $|\db u| \leq V|u|$. 

\medskip 

\nid \textit{Proof of \rt{Thm::mt_intro} }. 
Choose $\phi \in C^\infty_c (B(0,s_0))$ with $\phi =1$ on $B((0,s_1))$. Let $\e_0$ be small enough and suppose $\| V \|_{L^q(B(0,s_0))} < \e_0$ but $u$ does not vanish identically on $B(0, s_0)$. Let $u = (u_1,\dots, u_n)$ be given as in the theorem. Using the vector notation $f=(f_1,\dots, f_n)$, we set $f= \phi u $. Then
\[
  \db f = u \db \phi + \phi \db u.  
\]
Since by assumption $\| u \|_{L^{p'}(B(0,r))}$ vanishes to infinite order as $r \to 0$, so does $ \| u \db \phi \|_{L^p(B(0,r))}$ ($ p < p'$). On the other hand, by H\"older's inequality applied to $\frac{1}{p} - \frac{1}{p'} = \frac{1}{q}$, we get 
\[
  \| \phi \db u \|_{L^p(B(0,r))} \leq \| \phi V u \|_{L^p(B(0,r))}
  \leq \| V \|_{L^q(B(0,r))} \| u \|_{L^{p'}(B(0,r))}. 
\]
Therefore $\| \phi \db u \|_{L^p(B(0,r))}$ vanishes to infinite order as $r \to 0$. This shows that $\| \db f \|_{L^p(B(0,r))}$ vanishes to infinite order. By \rp{Prop::int_formula_W1p} applied to each $f_\all$, we get  
\[ 
  e^{\nu \psi (\si)} f (z) = \int_{\R^d} P_\nu (z,\zeta) e^{\nu \psi(\tau)} \we \db f(\zeta).  
\]
Thus \rp{Prop::Car_ineq} gives 
\begin{equation} \label{f_Car_ineq} 
  \|  e^{\nu \psi (\si)} f\|_{L^{p'}(A(\psi^{-1} \gm))} 
\leq \nu \min \{ |\gm|,\nu^{-\yh}\} \|  e^{\nu \psi (\si)} \db f \|_{L^p(B(0,1))}     
\end{equation}
where we recall the notation $A(\psi^{-1} \gm)= 
\{ x \in \R^d: \psi (\log \frac{1}{|x|}) \in \gm \}$. 
Define 
\begin{align*}
   \mu (\gm) = \int_{A( \psi^{-1} \gm)} (V|f| )^p \, dy, \quad f = \phi u. 
\end{align*}
The infinite order vanishing of $\| u \|_{p}$ 
(and thus $\|f\|_p$) implies that $\mu$ has the faster-than-exponential decay property. Indeed, if $\psi (\log \frac{1}{|x|}) = \rho(x)$, then $|\rho(x)| > T$ implies $|x| < e^{-c_0 T}$ or $|x| > e^{c_1 T}$ for some constant $c_0, c_1 >0$. By H\"older inequality and using the fact that $f$ is compactly supported, we have for each fixed $M>0$, 
\[
 \mu \left( \{ x: |\rho(x) | >T \} \right) \leq \left(  \int_{|x| < e^{-c_0 T}} |V|^q \right)^\frac{p}{q} \left(  \int_{|x| < e^{-c_0 T}} |f|^{p'} \right)^\frac{p}{p'}   
  \leq C_M \| V \|^p_{L^q} e^{-c_0p M T}, \quad \text{for all $T>0$}.   
\]
Thus
\[
 T^{-1} \log   \mu\{ |\rho| >T \} 
 \leq T^{-1} \log \| V \|_q^p + T^{-1} \log C_M - c_0 p M.   
\]
The first term goes to $0$ as $T \to \infty$. We can assume that $C_M > 1$. Then for each fixed $\ve >0$ and $M$, we can choose $T$ sufficiently large that the second term is less than $\ve M$. Since this holds for arbitrary large $M$, we see that the sum of the terms of the above right-hand side goes to $-\infty$ as $T \to \infty$.

We can write $\mu$ as
\[
 \mu (\gm) =   \int_{[\psi^{-1}(\gm)]_\ast} \int_{S^{d-1}} 
(V| f| )^p(r\om) d \om \, r^{d-1} dr 
=  \int_\gm g_\ast \left[ \int_{S^{d-1}} (V|f| )^p(r \om) r^{d-1} d\om \, dr \right] .   
\]
Here $g: \R^+ \to \R$ is given by $g (t) = \psi( \log t^{-1})$.  
Thus \[ 
 d\mu  = g_\ast \left[ \int_{S^{d-1}} (V|f| )^p \, d\om r^{d-1} dr \right] 
 =  \int_{S^{d-1}} (V(\ti r\om) |f(\ti r\om)| )^p \, d\om \ti r^{d-1} \frac{\pa g}{\pa \ti r} d \ti r, \quad \ti r = g^{-1}(r). 
\]  Since $d \mu_k (x) = e^{kx} d \mu(x)$, we have 
\begin{align*}
 \mu_k (\gm) &=  \int_\gm e^{kx} \, d \mu(x) 
  = \int_\gm e^{kx} g_\ast \left[ \int_{S^{d-1}} (V|f| )^p \, d\om r^{d-1} dr 
\right] 
\\ &= \int_{A(\psi^{-1} (\gm))} e^{k g(y)} (V|f|)^p 
=  \int_{A( \psi^{-1} \gm)}  (e^{\nu 
\psi(\si)}V|f|)^p,   
\end{align*}
where we denote $\nu = k/p$.    
For $N$ sufficiently large, we let $\{ I_j \} $ and $k_j \approx N$ be from \rl{Lem::measure}. We may assume that for all $j$, $|I_j| \geq 1/N$ (otherwise discard all but one of them, and expand that one to length $1/N$). Note that $A(\psi^{-1} \R) = A(\R^+) = B(0,1)$.  Setting $\nu_j = k_j / p$, we apply H\"older's inequality and \re{f_Car_ineq} to get
\begin{equation} \label{mu_kj_norm_1} 
\begin{aligned}
  \| \mu_{k_j} \| 
  = \int_{A(\psi^{-1} \R)} (e^{\nu_j \psi(\si)} V |f|)^p 
&\leq 2 \int_{A(\psi^{-1} I_j) } (e^{\nu_j \psi(\si)} V | f|)^p
  \\ &\leq 2 \left( \int_{A(\psi^{-1} I_j) \cap B(0,s_0)} V^q \right)^{p/q} \left( \int_{A(\psi^{-1} I_j)} (e^{\nu_j \psi(\si)}|f|)^{p'}  \right)^{p/p'}.    
\end{aligned} 
\end{equation}
Now, 
\begin{align*}
 \left( \int_{A(\psi^{-1} I_j)} e^{p' \nu_j \psi(\si)} |f|^{p'} \right)^{p/p'} 
 &= \left(\int_{A(\psi^{-1} I_j)} e^{p' \nu_j \psi(\si)} \left( \sum_{\all=1}^m |f^\all|^2 \right)^{\frac{p'}{2}} \right)^{p/p'}
  \\ &\leq C_{p,m} \sum_{\all=1}^m 
 \left( \int_{A(\psi^{-1} I_j)} e^{p' \nu_j \psi(\si)} |f^\all|^{p'} \right)^{p/p'}.  
\end{align*}
Apply the Carleman inequality \re{f_Car_ineq} to each $f^\all$ to get
\begin{align*}
 \left( \int_{A(\psi^{-1} I_j)} e^{p' \nu_j \psi(\si)} |f|^{p'} \right)^{p/p'} 
& \leq C_{p,m} \left( \nu_j |I_j| \right)^p \sum_{\all =1}^m  \left( \int_{A(\psi^{-1} I_j)} e^{p \nu_j \psi(\si)} |\db f^\all|^{p} \right)  
\\& \leq C_{p,m} \left( \frac{k_j}{p} |I_j| \right)^p \left( \int_{A(\psi^{-1} I_j)} e^{p \nu_j \psi(\si)} |\db f|^{p} \right)
\\ &\leq C_{p,m} \left( N|I_j| \right)^p  \| e^{\nu_j \psi(\si)} \db f \|^p_{L^p(B(0,1))} 
\end{align*} 
where we used that $|\db f|^p = (\sum_{\all=1}^m |\db f^\all |^2)^\frac{p}{2} \geq C_{m,p} \sum_{\all=1}^m |\db f^\all |^p$. Hence it follows from \re{mu_kj_norm_1} that 
\begin{equation} \label{mu_kj_norm_2}  
  \| \mu_{k_j} \| 
  \lesssim  \| V \|^p_{L^q(A(\psi^{-1}I_j) \cap B(0,s_0))} \left( N |I_j| \right)^p \| e^{\nu_j \psi(\si)} \db f \|^p_{L^p(B(0,1))}. 
\end{equation}
Using the notation: $\db f = (\db f)_{\all=1}^m$, we calculate $\db f = \db (\phi u)$ by the product rule: 
\begin{align*}
   |\db f | &= |\phi \db u + u \db \phi | 
   \\ &\leq \phi V  |u| + |u \db \phi| 
   \\ &= V |f| + \var,  
\end{align*}
where $\var: = u \db \phi \in L^p$ is supported in $\{ x: s_1 < |x| < s_0 \}$. Since 
\begin{align*}
 \| e^{\nu_j \psi(\si)} V f \|^p_{L^p(B(0,1))} 
 = \int_{A(\psi^{-1}\R)} \left( e^{ \nu_j \psi(\si)} |V f| \right)^p = \| \mu_{k_j} \|, 
\end{align*}
we get from \re{mu_kj_norm_2} 
\begin{equation} \label{mu_k_est} 
 \| \mu_{k_j} \| 
\lesssim \| V \|^p_{L^q(A(\psi^{-1}I_j)\cap B(0,s_0))} (N|I_j|)^p \left( \| \mu_{k_j} \| + \| e^{\nu \psi(\si)} \var \|^p_{L^p(\R^d)} \right). 
\end{equation}

On $\supp \var $, one has $e^{\nu_j \psi (\log \frac{1}{|x|})} \leq e^{\nu_j \psi (\log \frac{1}{s_1})} $, so the last term is $O(e^{p\nu_j \psi(\si_1)} )$, $\si_1 = \log \frac{1}{s_1}$. On the other hand, for $y \in D(0,s_1)$, we have $\psi(\log |y|^{-1}) - \psi(\log s_1^{-1} ) 
\geq c_1 (\log |y|^{-1} - \log s_1^{-1})= c_1 \log \frac{s_1}{|y| }$, and 
\begin{align*} 
\frac{\| \mu_{k_j} \| }{e^{p \nu_j \psi(\si_1)}} 
&= \int_{\R^d} \left( e^{\nu_j \left[ \psi(\log |y|^{-1} 
) - \psi(\log s_1^{-1}) \right] } |f| \right)^p dy
\\ &\geq \int_{B(0,s_1)} e^{c_1 \nu_j \log \frac{s_1}{|y|}} |u(y)|^p dy , \qquad \text{$f = u$ on $D(0,s_1)$}, 
\end{align*} 
where the last term goes to $\infty$ as $\nu_j = k_j / p \approx N/p \to \infty$.    
Hence for large $N $ we can absorb the last term of \re{mu_k_est} into $\| \mu_{k_j} \|$. Taking $p^{th}$ root on both sides we get 
\[ 
    \| \mu_{k_j} \|^{1/p} \lesssim  \| V \|_{L^q(A(\psi^{-1}I_j) \cap B(0,s_0))}  (N |I_j|) \| \mu_{k_j} \|^{1/p}.
\] 
By assumption $\| \mu_{k_j} \| \neq 0$, so 
\[ 
 \| V \|_{L^q(A(\psi^{-1}I_j) \cap B(0,s_0))} 
\gtrsim (N |I_j|)^{-1}.
\] 
Now, as $I_j$ are all disjoint, so are $A(\psi^{-1} 
I_j)$. Summing over $j$ and using \rl{Lem::measure},  
\[
  \| V \|_{L^q(B(0,s_0))} \geq N^{-1} \sum_j (|I_j|)^{-1} \gtrsim 1,   
\]
which is a contradiction if $\e_0$ is small.

We need the following result due to Gong and Rosay \cite{G-R_07}. 
\begin{prop} \label{Prop::G-R}  
  Let $\Om$ be an open set in $\C^n$ and $f: \Om \to \C$ be a continuous function. If on $\Om \sm f^{-1}(0)$, $|\db f| \leq C |f|$, for some positive constant $C$, then $f^{-1}(0)$ is an analytic set.     
\end{prop}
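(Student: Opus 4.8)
The plan is to use that the conclusion is local and to reduce matters to a one–variable fact about pseudo-holomorphic functions, from which a local defining function for $f^{-1}(0)$ can be reassembled. Write $Z=f^{-1}(0)$ and fix $p\in Z$; it suffices to prove that $Z$ is analytic near $p$. If $f\equiv 0$ on a neighbourhood of $p$ this is trivial, so assume $f\not\equiv 0$ near $p$ and (after translating) take $p=0$. The starting point is the one–variable model: for an affine complex line $\ell$, the restriction $f_\ell:=f|_\ell$ is continuous and satisfies $|\db f_\ell|\le C|f_\ell|$ off its zero set, so by the similarity principle of Bers and Vekua for generalized analytic functions it factors locally as $f_\ell=e^{s}G$ with $G$ holomorphic and $s$ H\"older continuous. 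Consequently $Z\cap\ell$ is either all of $\ell\cap\Om$ or a discrete set, each zero of $f_\ell$ carries a well-defined positive integer multiplicity, and the number of zeros of $f_\ell$ in a subdisc (counted with multiplicity) equals the winding number of $f_\ell$ along the bounding circle — since $e^{s}$ extends continuously and without zeros over the disc and hence has winding number zero there.

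Next I would extract a local branched-covering structure. Since $f\not\equiv 0$ near $0$, some complex line through $0$ — take it to be the $z_n$-axis in coordinates $(z',z_n)$ — has $f(0,z_n)\not\equiv 0$, so $f(0,\cdot)$ vanishes to a finite order $k\ge 1$ at $z_n=0$ and has no other zero in $\{0<|z_n|\le r\}$. By continuity choose a polydisc $U\times D_r\ni 0$ on which $f$ is nonzero over $U\times\partial D_r$; then the winding-number description above shows that the zero count $n(z'):=\#\{z_n\in D_r: f(z',z_n)=0\}$ (with multiplicity) is integer valued and continuous in $z'$, hence $n(z')\equiv k$. Thus $Z\cap(U\times D_r)$ is a proper $k$-sheeted branched cover of $U$ under $\pi(z',z_n)=z'$. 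For each $z'\in U$ let $W_{z'}(w)=w^{k}+c_1(z')w^{k-1}+\cdots+c_k(z')$ be the monic polynomial whose roots, counted with multiplicity, are the zeros of $f(z',\cdot)$ in $D_r$ (equivalently, the Weierstrass polynomial of the holomorphic factor of $f(z',\cdot)$). Continuity of $f$ together with constancy of the zero count gives that $c_1,\dots,c_k$ are continuous on $U$. If one can show each $c_j$ is \emph{holomorphic} on $U$, then $P(z',z_n):=W_{z'}(z_n)$ is a holomorphic Weierstrass polynomial with $P^{-1}(0)=Z\cap(U\times D_r)$; hence $Z$ is analytic near $0$, and since this holds near every point of $Z$, $Z$ is an analytic subset of $\Om$.

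It remains to establish the holomorphy of the $c_j$, which is the heart of the argument. By Hartogs' theorem it is enough to show that each $c_j$ is holomorphic in each base variable separately, i.e.\ one may restrict everything to a $2$-dimensional slice and assume $n=2$ (the restriction of $f$ to such a slice is again continuous and obeys the same $\db$-inequality). On the slice I would express the coefficients $c_j(z_1)$ through the Newton power sums $p_l(z_1)=\sum_\alpha m_\alpha(z_1)\,w_\alpha(z_1)^{l}$ of the zeros, which by the factorization $f(z_1,\cdot)=e^{s_{z_1}}G_{z_1}$ can be written as
\[
  p_l(z_1)=\frac{1}{2\pi i}\int_{\partial D_r}w^{l}\,\frac{\partial_{w}f(z_1,w)}{f(z_1,w)}\,dw-\frac{1}{2\pi i}\int_{\partial D_r}w^{l}\,\partial_{w}s_{z_1}(w)\,dw,\qquad l=1,\dots,k,
\]
using $\partial_{w}f/f=G_{z_1}'/G_{z_1}+\partial_{w}s_{z_1}$. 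The task is to show this combination is holomorphic in $z_1$: one controls the correction term $\partial_{w}s_{z_1}$ by Cauchy-transform and Calder\'on--Zygmund estimates together with the bound $|a|\le C$ (where $\db f=af$), extracts the cancellation forced by $\db f=af$, and concludes $\db_{z_1}p_l=0$ in the distributional sense via a Morera-type argument, so that $p_l$, and hence $c_j$, is holomorphic.

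\textbf{The main obstacle} is precisely this last step: promoting the mere continuity of the fibrewise symmetric functions $c_j$ to holomorphy when $f$ is only assumed continuous, so that neither $f$ nor $\log f$ is differentiable and the natural contour-integral formulas for the power sums are contaminated by the non-holomorphic correction $s$. Making the estimates on $s$ uniform in the base variable and extracting the required cancellation from $\db f=af$ is the technical core carried out by Gong and Rosay in \cite{G-R_07}, whose argument I would follow.
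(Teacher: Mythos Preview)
The paper does not supply its own proof of this proposition: it is quoted as a result of Gong and Rosay \cite{G-R_07} and used as a black box in the proof of Theorem~\ref{Thm::Linfty_ucp}. So there is no in-paper argument to compare against directly.

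That said, the final Remark in the paper does indicate how Gong and Rosay actually proceed, and it is \emph{not} via your Weierstrass-preparation/branched-cover route. Their method is more global and more direct: one sets $V=\db f/f$ on $\Om\setminus f^{-1}(0)$ (so $|V|\le C$), extends $V$ by $0$ across the zero set, shows that this extension is $\db$-closed as a distribution on $\Om$, then solves $\db g=V$ locally and observes that $h:=fe^{-g}$ is holomorphic with $h^{-1}(0)=f^{-1}(0)$. This is essentially the argument the paper reproduces inside the proof of Theorem~\ref{Thm::Linfty_ucp} (there with the additional input that $\dim_\R f^{-1}(0)\le 2n-2$, which is what justifies the cutoff estimates from Lemma~\ref{Lem::HP}). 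Your scheme, by contrast, slices along complex lines, invokes the Bers--Vekua similarity principle fibrewise, and tries to assemble a Weierstrass polynomial $P(z',z_n)$ whose coefficients $c_j(z')$ you then want to prove holomorphic.

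Your outline is coherent up to the point you flag yourself: the passage from continuity of the $c_j$ to their holomorphy is the entire content of the result, and your sketch of that step (contour integrals for power sums, correcting by $\partial_w s_{z_1}$, then a Morera-type argument) is not a proof but a list of ingredients. In particular, the formula you write for $p_l(z_1)$ presumes enough regularity of $f$ in $w$ along $\partial D_r$ to make $\partial_w f/f$ meaningful there, which is not available under mere continuity; and the ``cancellation forced by $\db f=af$'' that you invoke is precisely the delicate point. So as written this is a plausible strategy with the crux deferred to \cite{G-R_07}, whereas the approach the paper attributes to Gong--Rosay sidesteps the fibrewise bookkeeping entirely by working with the $(0,1)$-form $V$ on the full domain.
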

\begin{lemma} 
\label{Lem::HP}
  Let $K$ be a compact subset of $\R^d$. Let $l$ and $p'$ be some positive number such that $n - lp' >0$. For each $\ve >0$, there exists some $\chi_{\ve} \in C^{\infty}_c (\R^d)$ with $\chi_{\ve} \equiv 1 $ in a neighborhood of $K$ and $\supp \chi_{\ve} \subset K_{\ve}$. Furthermore, for $ | \all | <  l$, 
 \begin{equation}
 \left| D^\all \chi_\ve \right|_{L^{p'}} \leq C_{\all,n} \ve^{ l - | \all| } \left( \La_{d-lp'}  (K) + \ve \right)^{\frac{1}{p'}}.  
 \end{equation} 
\end{lemma}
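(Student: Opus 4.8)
The statement is the classical fact that the ``$L^{p'}$-cost'' of a smooth cutoff near a compact set is controlled by the Hausdorff content of $K$ at the critical exponent, and the plan is to prove it by hand, building $\chi_\ve$ directly from a near-optimal covering of $K$. Observe first that the mere existence of \emph{some} $\chi_\ve \in C^\infty_c(\R^d)$ with $\chi_\ve \equiv 1$ near $K$ and $\supp \chi_\ve \subset K_\ve$ is trivial (mollify the indicator of $K_{\ve/2}$), so the substance is the estimate on $\|D^\all \chi_\ve\|_{L^{p'}}$; we may also assume $\La_{d-lp'}(K) < \infty$, else there is nothing to prove. Write $\si := d - lp' \in (0,d)$.

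First I would fix $\ve$ and, by definition of the $\si$-dimensional Hausdorff content (taken, as is standard in this context, over coverings by balls of radius $\le \ve$), choose a cover $K \subset \bigcup_i B(x_i, r_i)$ with $x_i \in K$, $r_i \le \ve$, and $\sum_i r_i^{\si} \le \La_\si(K) + \ve$; by compactness this cover may be taken finite. A Besicovitch-type covering argument then lets me pass to a subcover of $K$ (same total $\si$-content up to a dimensional constant) for which the doubled balls $\{B(x_i, 2r_i)\}$ have bounded overlap, $\sum_i \1_{B(x_i, 2r_i)} \le C_d$; this overlap control is exactly what keeps the glued-together function's derivatives bounded.

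Next I would assemble the cutoff: choose $\phi_i \in C^\infty_c(B(x_i, 2r_i))$ with $\phi_i \equiv 1$ on $B(x_i, r_i)$ and $|D^\all \phi_i| \le C_\all r_i^{-|\all|}$, set $\Phi := \sum_i \phi_i$, and define $\chi_\ve := \eta \circ \Phi$, where $\eta \in C^\infty(\R)$ agrees with the identity near $0$, equals $1$ on $[1,\infty)$, and has all derivatives bounded. Since $\Phi \ge 1$ on the open neighborhood $U := \bigcup_i B(x_i, r_i) \supset K$, we get $\chi_\ve \equiv 1$ on $U$; since $\supp \chi_\ve \subset \supp \Phi \subset \bigcup_i B(x_i, 2r_i) \subset K_{2\ve}$ and the $x_i$ lie in $K$, the support requirement holds after rescaling $\ve \mapsto \ve/2$. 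For the derivative bound, the higher-order chain rule writes $D^\all \chi_\ve$ (for $\all \ne 0$) as a sum of terms $\eta^{(k)}(\Phi) \prod_j D^{\beta_j}\Phi$ with $k \ge 1$ and $\sum_j \beta_j = \all$; but $\eta^{(k)}$ for $k \ge 1$ vanishes on $[1,\infty)$, so these terms are supported on $\{\Phi < 1\}$, a set on which every $\phi_i(x) < 1$ and hence $x$ can only lie in the shells $B(x_i, 2r_i) \setminus B(x_i, r_i)$. Combining this with the bounded overlap of the doubled balls gives the pointwise bound $|D^\all \chi_\ve(x)| \le C_{\all,d}\, \rho(x)^{-|\all|}$, where $\rho(x) := \min\{r_i : x \in B(x_i, 2r_i)\}$, with $D^\all \chi_\ve$ supported in $\bigcup_i B(x_i, 2r_i)$.

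Finally I would integrate. Partitioning $\bigcup_i B(x_i, 2r_i)$ according to which ball realizes the minimum $\rho(x)$, one gets
\[
  \|D^\all \chi_\ve\|_{L^{p'}}^{p'} \le C_{\all,d} \sum_i r_i^{-|\all|p'}\, |B(x_i, 2r_i)| \le C_{\all,d} \sum_i r_i^{d-|\all|p'}.
\]
Here the exponent splits as $d - |\all|p' = (d-lp') + (l-|\all|)p' = \si + (l-|\all|)p'$, and since $|\all| < l$ we have $(l-|\all|)p' > 0$, so $r_i \le \ve$ gives $r_i^{d-|\all|p'} \le \ve^{(l-|\all|)p'} r_i^{\si}$; summing and using $\sum_i r_i^{\si} \le \La_\si(K) + \ve$ yields $\|D^\all \chi_\ve\|_{L^{p'}}^{p'} \le C_{\all,d}\, \ve^{(l-|\all|)p'}(\La_{d-lp'}(K) + \ve)$, which is the claim. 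I expect the only real obstacle to be the covering bookkeeping of the first two steps: reconciling the precise definition of $\La_{d-lp'}(K)$ with the need for a near-optimal cover by radius-$\le\ve$ balls whose dilates have bounded overlap. Everything downstream — the construction of $\chi_\ve$, the chain-rule estimate, and the elementary interpolation $r_i^{d-|\all|p'} \le \ve^{(l-|\all|)p'} r_i^{d-lp'}$ between the ``volume'' exponent $d$ and the ``content'' exponent $d-lp'$ — is routine.
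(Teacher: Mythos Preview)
The paper does not prove this lemma; it simply cites an external reference \cite{S-Z_22}. Your proposal, by contrast, supplies a complete direct argument along the classical Harvey--Polking lines: choose a near-optimal cover of $K$ by small balls, refine to one with bounded overlap, glue bump functions via $\chi_\ve = \eta\circ\Phi$, and exploit the exponent identity $d-|\all|p' = (d-lp') + (l-|\all|)p'$ together with $r_i\le\ve$ to trade volume for Hausdorff content. This is correct in outline and is exactly the standard construction underlying such results. You rightly flag the one genuinely delicate point: extracting from an arbitrary near-optimal cover a subcover whose \emph{doubled} balls have bounded overlap while keeping $\sum r_i^{d-lp'}$ comparable. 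Besicovitch alone controls overlap of the original balls; to control the dilates one typically stratifies into dyadic radius scales $r_i\in[2^{-k-1}\ve,2^{-k}\ve]$, applies Besicovitch within each scale (where comparable radii force bounded overlap of the $2r_i$-balls as well), and observes that the resulting estimates simply sum. A second minor point: your interpretation of $\La_{d-lp'}$ as the content restricted to radii $\le\ve$ is what makes the final inequality go through cleanly; if one insists on the unrestricted content, the argument gives the bound with $\La_{d-lp'}^\ve(K)$ instead, which is harmless for the application in the paper since there $\La_{d-lp'}(\Ac)=0$. With these caveats, your proof is sound and more informative than the paper's citation.
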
 
\begin{proof}
 See \cite{S-Z_22}.  
\end{proof}

\begin{thm} \label{Thm::Linfty_ucp}   
 If $V \in L^\infty(\Om)$, then the differential inequality $|\db u| \leq V|u| $ has the strong unique continuation property in the following sense: Let $u \in W^{1,2}_\loc(\Om)$. If $|\db u| \leq V|u|$ and
\[
 \lim_{R \to 0} R^{-N} \int_{|x| < R} |u|^2 = 0
\]
for all $N$, then $u$ is $0$. 
\end{thm}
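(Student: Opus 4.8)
The plan is to deduce \rt{Thm::Linfty_ucp} from the Gong--Rosay theorem (\rp{Prop::G-R}) and the classical one-dimensional similarity principle, after first upgrading the regularity of $u$. We may assume $z_0=0\in\Om$ and, as in \rt{Thm::mt_intro}, that $\Om$ is connected; it suffices to show that $u$ vanishes on a neighborhood of $0$ whose radius is bounded below in terms of $\dist(\cdot,\partial\Om)$ and the local sup-norm of $V$, since a finite chain of overlapping balls along paths then propagates the vanishing throughout $\Om$.

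\emph{Step 1 (regularity and pointwise flatness).} Componentwise one has $\Delta u=4\sum_{j=1}^{n}\partial_{z_j}(\db u)_j$ in the sense of distributions, and on any ball $B\Subset\Om$ the hypothesis gives $|(\db u)_j|\le\|V\|_{L^\infty(B)}\,|u|$, so $(\db u)_j\in L^s_\loc$ whenever $u\in L^s_\loc$. Writing $u$ against the Newtonian potential and using the Calder\'on--Zygmund bound for $\nabla\partial_{z_j}\Gamma$, the relation $\Delta u=\sum_j\partial_{z_j}G_j$ with $G_j\in L^s_\loc$ forces $u\in W^{1,s}_\loc$; iterating this together with Sobolev embedding, starting from $s=2$, yields $u\in W^{1,s}_\loc$ for every finite $s$, hence $u\in C^{0,\alpha}_\loc(\Om)$ for every $\alpha\in(0,1)$. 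In particular $u$ is continuous and $u(0)=0$, and a short argument using H\"older continuity promotes the $L^2$ infinite-order vanishing to the pointwise statement $|u(z)|=o(|z|^M)$ as $z\to0$, for every $M$: if instead $|u(z_k)|\ge\delta|z_k|^M$ along some $z_k\to0$, then $|u|\ge\tfrac{\delta}{2}|z_k|^M$ on a ball of radius $\sim|z_k|^{M/\alpha}\ll|z_k|$ around $z_k$, which contradicts $R^{-N}\int_{B(0,R)}|u|^2\to0$ once $N$ is large.

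\emph{Step 2 (Gong--Rosay, slicing, similarity principle).} Suppose for contradiction that $u$ does not vanish on any neighborhood of $0$. Fix $B=B(0,\rho)\Subset\Om$ with $C_0:=\|V\|_{L^\infty(B)}<\infty$. Since $\db u=0$ a.e.\ on $\{u=0\}$, we may write $\db u=\vec b\,u$ a.e.\ on $B$ with $\|\vec b\|_{L^\infty(B)}\le C_0$, so in particular $|\db u|\le C_0|u|$; by \rp{Prop::G-R}, $Z:=u^{-1}(0)\cap B$ is an analytic set, and being proper it satisfies $\dim_{\C}Z\le n-1$ and $|Z|=0$, whence the set of directions $\zeta_0\in\mathbf{P}^{n-1}(\C)$ with $\C\zeta_0\subset Z$ near $0$ is Lebesgue-null (otherwise $Z$ would have positive measure in $\C^n$). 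For a.e.\ remaining direction $\zeta_0$ with $|\zeta_0|=1$, the restriction $v(w):=u(w\zeta_0)$ is a continuous function that lies in $W^{1,s}_\loc$ of a disk for every $s$, satisfies $\partial_{\bar w}v=\beta v$ a.e.\ with $\beta\in L^\infty$, $|\beta|\le C_0$, and inherits $|v(w)|=o(|w|^M)$ for all $M$ from Step 1. By the similarity principle, $v=e^{h}\phi$ on a slightly smaller disk $D_{\rho'}$ with $h$ bounded continuous and $\phi$ holomorphic; the flatness of $v$ forces $\phi$ to vanish to infinite order at $0$, so $\phi\equiv0$ and $v\equiv0$ on $D_{\rho'}$. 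Hence $u$ vanishes on the union, over a full-measure set of directions $\zeta_0$, of the disks $\{w\zeta_0:|w|<\rho'\}$, which is a full-measure subset of $B(0,\rho')$; by continuity $u\equiv0$ on $B(0,\rho')$, contradicting the assumption. Therefore $u$ vanishes on a neighborhood of $0$ with $\rho'$ controlled as required, and the propagation described in the first paragraph finishes the proof.

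\emph{Main obstacle.} The conceptual content is carried by \rp{Prop::G-R} and the classical similarity principle; the real work is Step 1. One must bootstrap off $\Delta u=4\sum_j\partial_{z_j}(\db u)_j$ to obtain $u\in\bigcap_{s<\infty}W^{1,s}_\loc$ and then convert the $L^2$-flatness hypothesis into pointwise decay, because exactly this regularity is what lets us invoke Gong--Rosay (continuity of $u$), slice along a.e.\ complex line through $0$ (a Fubini/trace argument using $\nabla u\in L^s_\loc$ for large $s$), and transport the flatness to those slices.
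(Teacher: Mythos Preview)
Your proof is correct, but it follows a genuinely different route from the paper's.

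Both arguments begin the same way: bootstrap the regularity of $u$ using $|\db u|\le C|u|$ to reach $u\in\bigcap_{s<\infty}W^{1,s}_\loc\subset C^{0,\alpha}_\loc$, and invoke Gong--Rosay. After that they diverge. The paper defines $V=\db u/u$ off the zero set $\Ac=u^{-1}(0)$, uses a Hausdorff-content cutoff lemma (\rl{Lem::HP}) to show $\db u=Vu$ and $\db V=0$ hold across $\Ac$ in the sense of distributions, then solves the $n$-dimensional equation $\db f=V$ on a ball and concludes that $h=ue^{-f}$ is holomorphic in $\C^n$, flat at $0$, hence identically zero. Your argument instead upgrades the $L^2$-flatness to pointwise flatness, slices along almost every complex line through $0$ via a blow-up/Fubini argument, and applies the one-dimensional similarity principle on each slice; continuity then forces $u\equiv0$ on a ball. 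Your approach is closer in spirit to the Pan--Zhang reduction mentioned in the introduction (which assumes $u$ smooth); the point is that the $W^{1,s}$-for-all-$s$ regularity you obtain is already enough to make the slicing and the pointwise-flatness transfer rigorous without smoothness. A minor remark: your invocation of Gong--Rosay is not actually needed for the logic of your argument, since lines on which $v\equiv0$ cause no difficulty and lines on which $v\not\equiv0$ are handled by the similarity principle regardless; the paper, by contrast, uses Gong--Rosay in an essential way to control the Hausdorff dimension of $\Ac$ in the cutoff step. What your approach buys is that it avoids both the Hausdorff-content lemma and the $n$-dimensional $\db$-solvability argument; what the paper's approach buys is a single global holomorphic function rather than a measurable family of one-dimensional ones, with no slicing technicalities.
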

\begin{proof}
  By Sobolev embedding, we have $u \in L^{\frac{2d}{d-2}}_\loc$. The inequality $|\db u| \leq C |u|$ shows that $\db u \in L^{\frac{2d}{d-2}}$. Then $u \in W^{1,\frac{2d}{d-2}}$. Apply Sobolev embedding again we get $u \in W^{\frac{2d}{d-4}}$. Repeat the procedure we eventually obtain $u \in L^p$ for some $p > n$. Hence $u \in C^0$.   By \rp{Prop::G-R}, $\Ac= u^{-1}(0)$ is a complex analytic variety. If $\dim(\Ac) = 2n$ then we are done. Otherwise suppose $\dim (\Ac) \leq 2n-2$. Define 
\[
   V = \begin{cases}
  \frac{\db u}{u} & \text{on $\Om \sm \Ac$}; 
\\ 0 & \text{on $\Ac$}.  
   \end{cases}
\]
We show that $\db u = Vu $ on $\Om$ in the sense of distributions, i.e. 
\begin{equation} \label{dbu=Vu_dist}  
  \int u \we \db \phi = - \int (Vu) \we \phi, \quad \text{for all $(n,n-1)$ form $\phi$ with coefficients in $C^\infty_c(\Om)$}.    
\end{equation}
Let $\{ \wti \chi_\ve \}_{\ve >0}$ be the family of cut-off functions from \rl{Lem::HP}, $0 \leq \wti \chi_\ve \leq 1 $, $\wti \chi_\ve \equiv 1$ in a neighborhood of $\Ac$, and supp $\wti \chi_\ve \seq \Ac_\ve$. Furthermore, 
\begin{equation} \label{chi_ve_grad_est}   
  \left| \na \wti \chi_\ve \right|_{L^{p'}(\Om)} \leq C_{\all,n} \ve^{ l - 1 } \left( \La_{n-lp'}  (\Ac) + \ve \right)^{\frac{1}{p'}}, \quad n-lp' > 0. 
\end{equation} 
Let $ \chi_\ve:= 1 - \wti \chi_\ve$, so that $\chi_\ve \equiv 0$ near $\Ac$ and $\chi_\ve \equiv 1$ in $\Om \sm \Ac_\ve$. Moreover, $\chi_\ve$ converges to $1$ point-wise in $\Om \sm \Ac$ as $\ve \to 0$, and $\chi_\ve$ satisfies estimate \re{chi_ve_grad_est}. By multiplying with another cut-off function, we can assume that $\chi_\ve$ has compact support in $\Om$. Apply Stokes theorem to get
\begin{equation} \label{ibp_chi_ve} 
  -\int_\Om Vu \we (\chi_\ve \phi) = \int_\Om u \we \db (\chi_\ve \phi) = \int_\Om u (\db \chi_\ve) \we \phi 
  + \int_\Om u \chi_\ve \we \db \phi .   
\end{equation}
Using $u \in L^1_\loc(\Om)$, $V \in L^\infty(\Om)$, as well as $\chi_\ve$ converges to $1$ a.e. on $\Om$, we get by the dominated convergence theorem 
\[
  \lim_{\ve \to 0} \int_\Om Vu \we (\chi_\ve \phi) = \int_\Om V u \we \phi, \qquad  \lim_{\ve \to 0} \int_\Om u \chi_\ve \db \phi = \int_\Om u \db \phi. 
\]
For the other integral we apply H\"older's inequality,
\begin{align*}
  \int_{\Om} \left| u (\db \chi_\ve) \we \phi \right| 
  &\lesssim  \| u \|_{L^p(\Om)} \| \db \chi_\ve \|_{L^{p'}(\Om)}
  \\ &\leq C_{\all,n} \ve^{l- 1} \left( \La_{n-lp'} (\Ac) + \ve \right)^{\frac{1}{p'}} \| f \|_{L^p(\Om)}. 
\end{align*}   
As $u \in C^0(\Om)$, we can choose any $p>2$ and $p'<2$ in the above estimate. It is then possible to choose $l>1$ such that $lp'<2$, so $n-lp'> n-2$. By the assumption $\dim \Ac \leq n-2$, we have $\La_{n-lp'}(\Ac)=0$, and therefore
\[
 \int_{\Om} \left| u (\db \chi_\ve) \we \phi \right| 
 \leq C_{\all,n} \ve^{l-1+\frac{1}{p'}} \| u \|_{L^p(\Om)}, \quad p > 2. 
\] 
In particular the integral converges to $0$ as $\ve \to 0$. Letting $\ve \to 0$ in \re{ibp_chi_ve} we obtain \re{dbu=Vu_dist}. 

Next, we show that $V$ is a $\db$-closed $(0,1)$ form in the sense of distributions. This amounts to showing that 
\begin{equation} \label{V_dbar_closed}
  \int_\Om V \we \db \phi = 0, \quad \text{ for all $(n,n-2) $ form $\phi$ with coefficients in  $C^\infty_c(\Om)$.}      
\end{equation} 
We first note that $V = \db \log u$ is $\db$-closed in the sense of distributions on $\Om \sm \Ac$. 
Let $\chi_\ve$ be the same cut-off function as above. Then we have 
\[
0 = \int_\Om V \we \db (\chi_\ve \phi) 
= \int_\Om V \we (\db \chi_\ve) \phi 
+ \int_\Om V \we \chi_\ve \db \phi. 
\]
By the same reasoning as above, the first integral converges to $0$ as $\ve \to 0$ and the second integral converges to $\int V \we \db \phi$. This proves \re{V_dbar_closed} and thus $V$ is $\db$-closed.   

We now solve $\db f = V$ on a small ball containing $0$. The argument here is standard. Write $V = \sum_{j=1}^n V_j d \ov z_j$. Let $V_\ve = \sum_{j=1}^n (V_j \ast \psi_\ve) d \ov z_j$, for a family of smooth mollifier function $\psi_\ve$.   Since $V$ is a $\db$-closed $(0,1)$ form with coefficients in $L^p$, for any $p <\infty$,  $V_\ve$ is well-defined and $\db$-closed in $\Om' \Subset \Om$ for small and positive $\ve>0$.  Moreover, $\| V_\ve - V \|_{L^p(\Om)} \to 0$. Let $\chi$ be a smooth function supported in $\Om'$ and $\chi \equiv 1$ on a small ball $B$ containing the origin. Apply the Bochner-Martinelli formula to get 
\[
  \chi V_\ve = \db \left( K_0 \ast (\chi V_\ve) \right) + K_1 \ast (\db \chi \we V_\ve).  
\] 
By standard Calderon Zygmund theory, 
$\| K_0 \ast (\chi (V_\ve -V) ) \|_{W^{1,p}(\Om)} \leq C_p \| V_\ve - V \|_{L^p(\Om)} \ra 0$ as $\ve \to 0$. On $B$, $K_1 \ast (\db \chi \we V_\ve)$ is smooth and $\db$-closed for small $\ve >0$. 
Hence there exists a smooth function 
$\ti u_\ve$ such that $\db \ti u_\ve = K_1 \ast (\db \chi \we V_\ve)$ on $B$, and $\| \wti u_\ve \|_{W^{1,p}(B)} \leq C_p \| K_1 \ast (\db \chi \we V_\ve) \|_{W^{1,p}(B)} $. Also $\| K_1 \ast (\db \chi \we  (V_\ve -V) ) \|_{W^{1,p}(B)} \leq C_p \| V_\ve - V \|_{L^p(\Om)} \to 0$ as $\ve \to 0$. 
We now have $\db ( K_0 \ast (\chi V_\ve) - \ti u_\ve) = V_\ve $ on $B$. Since $K_0 \ast (\chi V_\ve) - \ti u_\ve$ is a bounded sequence in $W^{1,p}(B)$, by weak compactness there exists a subsequence which converges to $f \in W^{1,p}(B)$ and $\db f = V$ on $B$.    
  
Let $h = u e^{-f}$. Then it is straightforward to show that the following holds in the sense of distributions: 

\[
  \db h = (\db u) e^{-f} -  u e^{-f} \db f 
  = e^{-f} (\db u - Vu) = 0, \quad \text{on $\Om$.} 
\]
This shows that $h$ is holomorphic. Since $u$ vanishes to infinite order at $0$ in the $L^2$ sense and $f$ is bounded, $h$ vanishes to infinite order in the $L^2$ sense at $0$. Estimating the derivatives of $h$ at $0$ by the value of $h$ in some disk shrinking to $0$, we see that all the derivatives of $h$ must vanish at $0$. Thus $h \equiv 0$, which implies that $u \equiv 0$. 
\end{proof}
\begin{rem}
    The proof of \rt{Thm::Linfty_ucp} can also be derived more directly from Gong and Rosay's argument. They actually first showed that $V$ is $\db$-closed, and then they used this fact to find $h$ by solving $\db$ equation and hence $u^{-1}(0) = h^{-1}(0)$ is the zero set of holomorphic function. 
\end{rem}
\bibliographystyle{amsalpha}
\bibliography{reference}  

\providecommand{\bysame}{\leavevmode\hbox to3em{\hrulefill}\thinspace}
\providecommand{\MR}{\relax\ifhmode\unskip\space\fi MR }
\providecommand{\MRhref}[2]{%
  \href{http://www.ams.org/mathscinet-getitem?mr=#1}{#2}
}
\providecommand{\href}[2]{#2}
\begin{thebibliography}{Wol93}

\bibitem[BL90]{B-L_90}
Steve Bell and L\'{a}szl\'{o} Lempert, \emph{A {$C^\infty$} {S}chwarz reflection principle in one and several complex variables}, J. Differential Geom. \textbf{32} (1990), no.~3, 899--915. \MR{1078165}

\bibitem[GR07]{G-R_07}
Xianghong Gong and Jean-Pierre Rosay, \emph{Differential inequalities of continuous functions and removing singularities of {R}ado type for {$J$}-holomorphic maps}, Math. Scand. \textbf{101} (2007), no.~2, 293--319. \MR{2379291}

\bibitem[HL11]{H-L_11}
Qing Han and Fanghua Lin, \emph{Elliptic partial differential equations}, second ed., Courant Lecture Notes in Mathematics, vol.~1, Courant Institute of Mathematical Sciences, New York; American Mathematical Society, Providence, RI, 2011. \MR{2777537}

\bibitem[PZ23]{P-Z_23}
Yifei Pan and Yuan Zhang, \emph{Unique continuation for {$\overline\partial$} with square-integrable potentials}, New York J. Math. \textbf{29} (2023), 402--416. \MR{4570119}

\bibitem[PZ24]{P-Z_24}
\bysame, \emph{Unique continuation of {S}chrödinger-type equations for $\db$}, preprint on arXiv:2404.01947 (2024).

\bibitem[Ros10]{Ros10}
Jean-Pierre Rosay, \emph{Uniqueness in rough almost complex structures, and differential inequalities}, Ann. Inst. Fourier (Grenoble) \textbf{60} (2010), no.~6, 2261--2273. \MR{2791657}

\bibitem[SZ22]{S-Z_22}
Ziming Shi and Ruixiang Zhang, \emph{Sobolev differentiability properties of logarithmic modulus of real analytic functions}, preprint on arXiv: 2205.02159 (2022).

\bibitem[Sze75]{Sze75}
G\'{a}bor Szeg\H{o}, \emph{Orthogonal polynomials}, fourth ed., American Mathematical Society Colloquium Publications, Vol. XXIII, American Mathematical Society, Providence, RI, 1975. \MR{372517}

\bibitem[Wol90]{Tom90}
Thomas~H. Wolff, \emph{Unique continuation for {$|\Delta u|\le V|\nabla u|$} and related problems}, Rev. Mat. Iberoamericana \textbf{6} (1990), no.~3-4, 155--200. \MR{1125760}

\bibitem[Wol93]{Tom93}
\bysame, \emph{Recent work on sharp estimates in second-order elliptic unique continuation problems}, J. Geom. Anal. \textbf{3} (1993), no.~6, 621--650. \MR{1248088}

\end{thebibliography}
\end{document}